\def\th@exercise{%
  \normalfont 
  \thm@headpunct{:}%
}
\title{Newtheorem and theoremstyle test}
\newtheorem{thm}{Theorem}[section]
\newtheorem{cor}[thm]{Corollary}
\newtheorem{lem}[thm]{Lemma}
\theoremstyle{remark}
\newtheorem*{rmk}{Remark}
\theoremstyle{plain}
\newtheorem{Def}{Definition}
\newtheoremstyle{note}
  {3pt}
  {3pt}
  {}
  {}
  {\itshape}
  {:}
  {.5em}
  {}
\theoremstyle{note}
\newtheoremstyle{citing}
  {3pt}
  {3pt}
  {\itshape}
  {}
  {\bfseries}
  {.}
  {.5em}
  {\thmnote{#3}}
\theoremstyle{citing}
\newtheoremstyle{break}
  {9pt}
  {9pt}
  {\itshape}
  {}
  {\bfseries}
  {.}
  {\newline}
  {}
\theoremstyle{break}
\theoremstyle{exercise}
\theoremstyle{plain}
\let\lvert=|\let\rvert=|
\begin{document}

\title{Isolated singularities of solutions of certain quasi-linear elliptic
inequalities}

\author{Shiguang Ma and Shengyang Zang}
\maketitle
\begin{abstract}
We provide a complete classification of the asymptotic behavior of
isolated singularities for solutions satisfying
\[
0\le-\Delta_{p}u(x)\le \tau u^{\frac{n(p-1)}{n-p}}(x),\,\,u(x)\ge0,\,\,1<p<n,\,\,n\ge2, 
\]where $u(x)\in C^2(B(0,1)\backslash\{0\})$ and $B(0,1)\subset \mathbb R^n$.
\end{abstract}

\section{Introduction}

The study of the behavior of isolated singularities in quasi-linear
equations is a fundamental topic in partial differential equations.
A key reference in this field is Serrin\textquoteright s work \cite{Serrin65},
which investigates the following class of equations

\begin{equation}
{\rm div}\mathcal{A}(x,u,u_{x})=\mathcal{B}(x,u,u_{x})\label{equation of divergence form}
\end{equation}
with the following assumptions
\begin{align}\label{structure condition 1}
|\mathcal{A}(x,u,q)| & \le a|q|^{p-1}+b|u|^{p-1}+e\nonumber \\
|\mathcal{B}(x,u,q)| & \le c|q|^{p-1}+d|u|^{p-1}+f\\
q\cdot\mathcal{A}(x,u,q) & \ge|q|^{p}-d|u|^{p}-g,\nonumber 
\end{align}
where $p\in(1,n]$ is a fixed number, and $0<a\in\mathbb{R}$, $b,c,d,e,f,g$
are measurable functions of $x$ contained in the respective Lebesgue
classes
\begin{equation}\label{structure condition 2}
    \begin{cases}
   & 
b,e\in L^{\frac{n}{p-1}};c\in L^{\frac{n}{1-\varepsilon}};d,f,g\in L^{\frac{n}{p-\varepsilon}},\,\,{\rm for}\,\,\,\,1<p<n,\\
&
b,e\in L^{\frac{n}{n-1-\varepsilon}};c\in L^{\frac{n}{1-\varepsilon}};d,f,g\in L^{\frac{n}{n-\varepsilon}},\,\,\,\,\,\,{\rm for}\,\,\,\,p=n,
\end{cases}
\end{equation}
with $\varepsilon$ being some positive constant.

Let $B(0,1)=\{x\in\mathbb{R}^n:|x|<1,n\ge2\}$. The main theorem in \cite{Serrin65} is stated as follows, which will be used frequently in the sequel.

\begin{thm}\label{Serrin's removable singularity}

Let $u$ be a continuous solution of (\ref{equation of divergence form})
in $B(0,1)\backslash\{0\}$. Suppose that $u\ge L$ for some constant
$L$ and that conditions (\ref{structure condition 1}) and (\ref{structure condition 2}) hold. Then either $u$ has a removable singularity at $0$, or there exist two positive constants $0<C'\le C''$ such that
\[
C'\le\frac{u(x)}{G_{p}(x)}\le C'' 
\]
as $x\to 0$, where 
\[
G_{p}(x)=\begin{cases}
|x|^{\frac{p-n}{p-1}}, & 1<p<n,\\
\log\frac{1}{|x|}, & p=n.
\end{cases}
\]

\end{thm}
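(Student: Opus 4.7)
My plan is to combine a scale-invariant Harnack inequality on annuli with a barrier/comparison argument against the explicit fundamental solution $G_p$ of the $p$-Laplacian.

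First I would reduce to the case $u \geq 0$ by replacing $u$ with $u - L$ and absorbing the constant shift into the lower-order coefficients $d, f, g$ (the structure conditions are preserved). If $u$ is bounded near the origin, then a separate removability result (already in Serrin's framework) gives that $0$ is a removable singularity; so I may further assume $\limsup_{x \to 0} u(x) = +\infty$, and it remains to establish the two-sided bound by $G_p$.

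Next I would invoke Serrin's Harnack inequality applied to $u$ on dyadic annuli $A_r := \{r \leq |x| \leq 4r\}$. The Lebesgue exponents chosen in (\ref{structure condition 2}) are exactly those for which the relevant coefficient norms are scale-invariant up to a factor $r^{\varepsilon}$, so the Harnack constant on $A_r$ can be taken uniform in $r$ for small $r$. Setting $M(r) := \max_{|x|=r} u$ and $m(r) := \min_{|x|=r} u$, this yields
\[
M(r) \leq K\, m(r)
\]
with $K$ independent of $r$. In particular, $u$ being unbounded forces $m(r) \to \infty$ as $r \to 0$, and $m(r), M(r)$ are comparable on every sphere.

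I would then compare $u$ with multiples of $G_p$ on each $A_r$. Choose $\alpha_r, \beta_r > 0$ so that $\alpha_r G_p(x) \leq u(x) \leq \beta_r G_p(x)$ on $\partial A_r$; by the Harnack estimate $\beta_r/\alpha_r \leq K'$ independent of $r$. One then invokes a weak comparison principle for the operator \eqref{equation of divergence form} against the barriers $\alpha_r G_p$ and $\beta_r G_p$ to extend the inequalities to the interior of $A_r$. Iterating across a sequence of annuli $A_{2^{-k}}$ and using the uniform ratio $K'$, one concludes
\[
C' G_p(x) \leq u(x) \leq C'' G_p(x)
\]
near the origin.

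The principal obstacle will be the comparison step, because $G_p$ solves the \emph{pure} equation $\mathrm{div}(|\nabla u|^{p-2} \nabla u) = 0$ while $u$ satisfies \eqref{equation of divergence form} with the full perturbative structure. A naive application of the comparison principle is not available; instead one must control the deviation produced by $b, c, d, e, f, g$ through their Lebesgue norms on $A_r$, which shrink like a positive power of $r$ thanks to the $\varepsilon$ in (\ref{structure condition 2}). Quantifying this via a Moser/De~Giorgi-type iteration — or equivalently, absorbing the lower-order terms into the leading-order estimate once $r$ is sufficiently small — is the technical heart of the argument and the place where the precise form of the structure conditions is used essentially.
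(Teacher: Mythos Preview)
The paper does not prove this theorem at all: Theorem~\ref{Serrin's removable singularity} is quoted verbatim as ``the main theorem in \cite{Serrin65}'' and is used as a black box throughout the rest of the paper. There is therefore no proof in the paper to compare your proposal against.

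As for your sketch on its own merits, the broad strategy --- scale-invariant Harnack on dyadic annuli to get $M(r)\le K\,m(r)$, followed by comparison with multiples of $G_p$ --- is indeed the skeleton of Serrin's original argument. But your comparison step is underspecified in a way that hides the real work. You propose to choose $\alpha_r,\beta_r$ on $\partial A_r$ and then ``iterate across annuli using the uniform ratio $K'$''; however, the constants $\alpha_r,\beta_r$ could a priori drift by a bounded factor at each dyadic scale, and a bounded-per-step ratio does \emph{not} by itself prevent geometric growth or decay of $\alpha_r$ and $\beta_r$ as $r\to 0$. What Serrin actually does is more delicate: he shows that a suitable function of $m(r)$ (or of the oscillation) satisfies a difference/differential inequality across scales whose solution is forced to be comparable to $G_p$, using the maximum principle in annuli together with the precise smallness of the lower-order coefficients. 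Your writeup does not identify the mechanism that stabilizes the constants across infinitely many annuli, and a naive barrier argument with $G_p$ fails because $G_p$ is not a sub/supersolution of the perturbed operator. That is the gap you would need to fill.
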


Then it is a natural and interesting question whether the limit
\[
\lim_{x\to0}\frac{u(x)}{G_{p}(x)}
\]
exists. We notice one important special model of (\ref{equation of divergence form})
is the following $p$-Laplace type equation
\begin{equation}
-\Delta_{p}u=-{\rm div}(|\nabla u|^{p-2}\nabla u)=u^{\sigma},\label{p-Laplace equation}
\end{equation}
 which automatically fall into the categories above for $1\le\sigma<\frac{n(p-1)}{n-p}$ but nominally fails for $\sigma= \frac{n(p-1)}{n-p}$, due to the local integrability results for $p$-superharmonic functions(see Lemma\ref{integrability of u} below).

In the present paper, more generally, we study the asymptotic behavior of
solutions at $0\in\mathbb{R}^{n}$, of the following differential
inequality 
\begin{equation}
0\le-\Delta_{p}u\le \tau u^{\frac{n(p-1)}{n-p}},u\ge0,1<p<n,\label{main differential inequality}
\end{equation}
where $u\in C^{2}(B(0,1)\backslash\{0\})$, $\tau$ is a positive number. 

The equations of the form (\ref{p-Laplace equation}) were well studied in the literature. In the works \cite{Ni-Serrin 1, Ni-Serrin 2, Serrin-Zou},
two different critical powers were raised. $\sigma+1=q_{*}=\frac{p(n-1)}{n-p}$
is called the lower critical exponent and $\sigma+1=q^{*}=\frac{np}{n-p}$
is called the critical exponent for Sobolev embedding. Thus, the exponent in (\ref{main differential inequality}) we are considering corresponds to the lower critical case and below. 

When $p=2$, for the Laplace operator $\Delta$, the Lane-Emden equation
\begin{equation}
-\Delta u=u^{\sigma}\label{Lane-Emden equation}
\end{equation}
has a much longer history. For example, when
$\sigma=\frac{n+2}{n-2}$ is the critical exponent for Sobolev embedding,
the properties of its nonnegative and radial solutions were studied
by Emden \cite{Emden 1907} and Fowler \cite{Folwer1914,Fowler1931}.
Based on \cite{Lions,Aviles83}, Aviles \cite{Avilis87} proved that,
when $\sigma=\frac{n}{n-2}$ is the lower critical exponent, any 
nonnegative $C^2$ solution of (\ref{Lane-Emden equation}) (actually a more
general type equation was considered) in $B(0,1)\backslash\{0\}$ has only two possible asymptotic behaviors: either $u$
has a removable singularity at $0$ or 
\[
|x|^{n-2}(\log\frac{1}{|x|})^{\frac{n-2}{2}}u(x)\to(\frac{n-2}{\sqrt{2}})^{n-2}\,\,{\rm as}\,\,x\to0.
\]
In their famous paper \cite{Caffarelli-Gidas-Spruck}, by using the
moving plane method, Caffarelli, Gidas, and Spruck proved that, if
$\sigma\in[\frac{n}{n-2},\frac{n+2}{n-2}]$, 
any nonnegative $C^2$ solution
of (\ref{Lane-Emden equation}) in $B(0,1)\backslash\{0\}$ has
the form
\[
u(x)=(1+O(|x|))m(|x|),\,\,m(r)=\frac{1}{|\partial B(0,r)|}\int_{\partial B(0,r)}u,
\]
from which they further gave 
the characterization of the singular solution of (\ref{Lane-Emden equation}) for $\sigma\in[\frac{n}{n-2},\frac{n+2}{n-2}]$.

 As another direction to extend Aviles' result, Taliaferro \cite{Tailaferro01, Taliaferro06}
completely classified the asymptotic behavior of  nonnegative solutions
of 
\[
0\le-\Delta u\le \tau u^{\frac{n}{n-2}}
\]
 in $B(0,1)\backslash\{0\}$. We refer to \cite{Hopf31,Gidas-Spruck-81a,Gidas-Spruck-81b,Veron81,Simon83}
for other work about the Lane-Emden equation (\ref{Lane-Emden equation}).

Then we turn to the study of (\ref{p-Laplace equation}). Guedda and
Véron \cite{Guedda-Veron88} classified the isolated singularities of the nonnegative radial solutions of 
\[
-\Delta_{p}u=u^{\sigma},p-1<\sigma<\frac{np}{n-p}-1.
\]

G. Du and S. Zhou proved several results  in \cite{Du-Zhou2024}
about the isolated singularities of a more generalized equation 
\[
-\Delta_{p}u=|x|^{\alpha}u^{\sigma}.
\]
 They deal with subcritical case $p-1<\sigma<\frac{(n+\alpha)(p-1)}{n-p}$,
critical case $q=\frac{(n+\alpha)(p-1)}{n-p}$ and supercritical case
$\frac{(n+\alpha)(p-1)}{n-p}<q<\frac{(n+\alpha)p}{n-p}-1$. In all
cases, radial solutions were considered. When $p-1<\sigma<\frac{(n+\alpha)(p-1)}{n-p}$,
non-radial solutions were also considered and they proved that the
isolated singularity of a nonnegative solution is either removable or
behaves like a fundamental solution. 

A question related to the study of isolated singularities for (\ref{p-Laplace equation}) is the classification of entire solutions in $\mathbb{R}^n$, especially the critical exponent $\sigma=\frac{np}{n-p}-1$. We refer to \cite{Sciunzi2016} for $1<p<n$ with the additional  
assumption of finite energy and \cite{Ou2025} for $\frac{n+1}{3}<p<n$ but without any further restrictions.

Our aim is to generalize Taliaferro's works \cite{Tailaferro01,Taliaferro06}
to $p$-Laplace case. 

We define 
\[
\underline{u}(r)=\inf_{\partial B(0,r)}u(x),\,\,\bar{u}(r)=\sup_{\partial B(0,r)}u(x).
\]
Our main theorem is the following one.

\begin{thm}\label{main thm}

Let $u$ be a $C^{2}$ positive solution of (\ref{main differential inequality})
in $B(0,1)\backslash\{0\}$. Then 
\[
\lim_{x\to0}\frac{u(x)}{\underline{u}(|x|)}=1.
\]
Moreover, exactly one of  the following holds,
\begin{enumerate}
\item $u$ has $C^{1,\alpha}$-extension to $0$ for some $\alpha>0$;
\item 
\[
\lim_{x\to0}|x|^{\frac{n-p}{p-1}}u(x)=m\in(0,\infty);
\]
\item 
\[
\lim_{x\to0}|x|^{\frac{n-p}{p-1}}u(x)=0
\]
 and 
\[
\liminf_{x\to0}(\log\frac{1}{|x|})^{\frac{n-p}{p(p-1)}}|x|^{\frac{n-p}{p-1}}u(x)\ge (\frac{1}{\tau}(\frac{n-p}{p})(\frac{n-p}{p-1})^{p-1})^\frac{n-p}{p(p-1)}.
\]
\end{enumerate}
\end{thm}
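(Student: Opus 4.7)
The plan is to adapt Taliaferro's strategy for $p=2$ to the $p$-Laplace setting, using Serrin's theorem as the starting point and a Fowler-type transformation at the lower critical exponent to extract the logarithmic rate in case~(3).

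\textbf{Serrin dichotomy and case (1).} First I would apply Theorem~\ref{Serrin's removable singularity} to the inequality. Since $\sigma=n(p-1)/(n-p)$ lies on the boundary of Serrin's structure conditions, I would verify them after a short bootstrap using the local $L^q$-integrability of $p$-superharmonic functions (Lemma~\ref{integrability of u}) promised in the introduction. This yields the dichotomy: either $u$ has a removable singularity at $0$, and then the standard DiBenedetto--Tolksdorf interior $C^{1,\alpha}$ regularity for $-\Delta_p$ with bounded right-hand side gives case~(1); or there exist $0<C'\le C''$ with
\[
C'\le |x|^{(n-p)/(p-1)}u(x)\le C'' \quad\text{near }0,
\]
and I continue in this case.

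\textbf{Spherical Harnack.} To establish $u(x)/\underline u(|x|)\to 1$, I would blow up: fix $r_k\to 0$ and set $u_k(y)=r_k^{(n-p)/(p-1)}u(r_ky)$. The critical scaling leaves $0\le-\Delta_p u_k\le \tau u_k^{\sigma}$ invariant, and Serrin's two-sided bound keeps $\{u_k\}$ bounded and bounded away from $0$ on any annulus. By quasilinear $C^{1,\alpha}$ compactness I extract a limit $u_\infty$ on $\mathbb R^n\setminus\{0\}$ satisfying the same inequality and two-sided bound. A rigidity argument — combining a quasilinear Harnack inequality on spherical shells with a strong minimum principle or an iteration-of-blow-ups on the oscillation $\bar u_k(1)-\underline u_k(1)$ — should force $u_\infty$ to be constant on $|y|=1$, whence $\bar u(r_k)/\underline u(r_k)\to 1$.

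\textbf{ODE reduction and case split.} With the Harnack step in hand I set $m(r)=r^{(n-p)/(p-1)}\underline u(r)\in[C',C'']$ and derive a differential inequality for $v(t)=m(e^{-t})$ by integrating $-\Delta_p u$ against the fundamental solution on spherical annuli, using the $o(1)$ non-radial error from Step~2. The critical scaling collapses this to an autonomous Fowler-type ODE for the radial $p$-Laplace critical equation, whose monotonicity forces $\lim_{r\to0}m(r)=m\in[0,C'']$. If $m>0$ we are in case~(2). If $m=0$, the linearization of this ODE near $v=0$ is separable, with maximal decaying solution
\[
v(t)\sim \Bigl(\frac{C_0}{t}\Bigr)^{(n-p)/(p(p-1))},\qquad C_0=\frac{1}{\tau}\cdot\frac{n-p}{p}\cdot\Bigl(\frac{n-p}{p-1}\Bigr)^{p-1}.
\]
A sub-/supersolution comparison against this explicit profile — again absorbing the non-radial $o(1)$ error — yields the $\liminf$ bound in case~(3), with the sharp constant $C_0^{(n-p)/(p(p-1))}$ identifying with the stated one.

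\textbf{Main obstacle.} I expect the spherical Harnack step $u/\underline u\to 1$ to be the hardest: Taliaferro's proof for $p=2$ leans on the Green representation formula in an essential way, and no linear representation is available for $-\Delta_p$. Replacing it by a quantitative blow-up and strong minimum principle argument, while keeping tight enough control to feed into the last step without losing the sharp constant $C_0$, is the main technical hurdle of the paper.
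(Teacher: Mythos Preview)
Your plan has a structural gap at the very first step. Serrin's Theorem~\ref{Serrin's removable singularity} does \emph{not} apply at the exponent $\sigma=\frac{n(p-1)}{n-p}$: the structure condition $d\in L^{n/(p-\varepsilon)}$ would require $u^{p(p-1)/(n-p)}\in L^{n/(p-\varepsilon)}$, i.e.\ $u\in L^{\frac{n(p-1)}{n-p}+\theta}$ for some $\theta>0$, but Lemma~\ref{integrability of u} gives $u\in L^q$ only for $q$ \emph{strictly} below $\frac{n(p-1)}{n-p}$, and this is sharp. No short bootstrap closes this; the introduction says so explicitly. Consequently the two-sided bound $C'\le|x|^{(n-p)/(p-1)}u(x)\le C''$ is unavailable as a starting point --- and indeed case~(3) of the theorem is exactly the regime where $u(x)\to\infty$ yet $|x|^{(n-p)/(p-1)}u(x)\to 0$, which Serrin's dichotomy would wrongly declare removable. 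The paper replaces Serrin by the Kilpel\"ainen--Mal\'y Wolff-potential estimate (Theorem~\ref{KM theorem}) together with Theorem~\ref{Theorem 4 of LMQZ2023b}: the upper bound in Section~\ref{Sec3} comes from a contradiction argument in which a rescaled Moser iteration bounds the Wolff potential, and the existence of the limit $m$ in Section~\ref{Sec4} comes from showing the Wolff potential of the rescaled measure is $o(1)$, which removes the $p$-thin exceptional set.

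Your later steps inherit this error. In the blow-up step, when $m=0$ the normalization $u_k(y)=r_k^{(n-p)/(p-1)}u(r_ky)$ tends to zero, not to a positive limit; the paper instead normalizes by $\underline u(r_k)$ and uses the crucial fact that $m=0$ forces $-\Delta_p u_r/u_r^{p-1}\to 0$ uniformly on annuli, so that Serrin's Harnack inequality applies \emph{a posteriori} with small coefficients and the limit is genuinely $p$-harmonic on $\mathbb R^n\setminus\{0\}$ (Theorem~\ref{Liouville type thm} then finishes). For the ODE step, there is no ``integrate against the fundamental solution'' for $\Delta_p$ when $p\ne 2$; the paper works with $\bar u,\underline u$, exploits their local semi-convexity/concavity to get distributional one-sided second derivatives, and derives the differential inequality for $\bar w(t)=e^{-t}\bar u(r)$ from the touching condition $\bar u(|x|)\ge u(x)$ with equality at a point. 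The sharp constant then comes from a phase-plane trapping argument in $(\bar w,\bar w_t^-)$ (Lemmas~\ref{stay in Omega} and~\ref{exist in Omega}), not from a sub/supersolution comparison.
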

\begin{rmk}
    When $m=0$, by Theorem \ref{Theorem 4 of LMQZ2023b} below one may obtain that $-\Delta_p u\in L^1_{loc}(B(0,1))$, and vice versa. 
\end{rmk}

Furthermore, we can rule out Item 3 of Theorem \ref{main thm} by slightly strengthening the upper bound control of $-\Delta_p u$.  We define 
\[\log_Q u(x)=\underbrace{\log \log \cdots \log}_{Q \text{ times}} u(x)\]
and then prove the following theorem.
\begin{thm} \label{strengthened thm}
    Let $u$ be a $C^2$ positive solution of 
\[0\le -\Delta_p u\le \frac{u^{\frac{n(p-1)}{n-p} }}{\log_1 u\cdot \log_2 u\cdots \log_{Q-1}u\cdot (\log_Q u)^\beta  }\] for some positive integer $Q>0$ and real number $\beta>1$. Then Item 3 of Theorem \ref{main thm} does not occur.
\end{thm}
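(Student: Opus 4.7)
The plan is to argue by contradiction, assuming Item 3 of Theorem \ref{main thm} holds for $u$ satisfying the strengthened inequality, and bootstrap the pointwise lower bound on $u$ until it contradicts the first assertion $\lim_{x\to 0}|x|^{(n-p)/(p-1)}u(x)=0$ of Item 3.

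First, the liminf in Item 3 forces $u(x)\to\infty$ as $x\to 0$: it implies $u(x)\ge c\,((\log 1/|x|)^{(n-p)/(p(p-1))}|x|^{(n-p)/(p-1)})^{-1}\to+\infty$. Consequently the denominator $D(u):=\log_1 u\cdots\log_{Q-1}u\cdot(\log_Q u)^{\beta}$ tends to $+\infty$, and for every $\epsilon>0$ the strengthened inequality yields $0\le-\Delta_p u\le\epsilon\,u^{n(p-1)/(n-p)}$ on a punctured neighborhood of $0$. Applying Theorem \ref{main thm} with $\tau=\epsilon$ (the three cases are mutually exclusive and the behavior of $u$ at $0$ is intrinsic, so $u$ remains in Item 3) gives
\[
\liminf_{x\to 0}\bigl(\log 1/|x|\bigr)^{\frac{n-p}{p(p-1)}}|x|^{\frac{n-p}{p-1}}u(x)\ \ge\ \Bigl(\tfrac{1}{\epsilon}\,\tfrac{n-p}{p}\bigl(\tfrac{n-p}{p-1}\bigr)^{p-1}\Bigr)^{\frac{n-p}{p(p-1)}}.
\]
Letting $\epsilon\to 0^{+}$, this liminf equals $+\infty$; equivalently, for every $K>0$, $u(x)\ge K|x|^{-\frac{n-p}{p-1}}(\log 1/|x|)^{-\frac{n-p}{p(p-1)}}$ for $|x|$ small enough.

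Next I transfer this into a growth estimate for $D(u)$. Taking logarithms iteratively gives $\log_1 u(x)\ge(1-o(1))\frac{n-p}{p-1}\log_1(1/|x|)$ and $\log_k u(x)\ge(1-o(1))\log_k(1/|x|)$ for $2\le k\le Q$ as $|x|\to 0$, so for a suitable $c>0$,
\[
D(u(x))\ \ge\ c\,\log_1(1/|x|)\cdot\log_2(1/|x|)\cdots(\log_Q(1/|x|))^{\beta}\ =:\ c\,\Psi(|x|).
\]
The strengthened inequality therefore upgrades to
\[
0\ \le\ -\Delta_p u(x)\ \le\ \frac{u(x)^{n(p-1)/(n-p)}}{c\,\Psi(|x|)},
\]
in which the coefficient is the explicit, slowly varying function $V(|x|):=(c\,\Psi(|x|))^{-1}\to 0$, replacing the constant $\tau$.

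The final step is to re-run the proof of the Item 3 lower bound with this variable coefficient $V(|x|)$ in place of $\tau$. That proof is built around the explicit profile $U(r):=r^{-\frac{n-p}{p-1}}(\log 1/r)^{-\frac{n-p}{p(p-1)}}$, which satisfies $-\Delta_p U=\tfrac{n-p}{p}\bigl(\tfrac{n-p}{p-1}\bigr)^{p-1}U^{n(p-1)/(n-p)}$ and produces, via ODE/averaging on the spherical averages $m(r)$, a bound of the form $u(x)\ge(C_{*}/\tau)^{(n-p)/(p(p-1))}U(|x|)$. Inserting the variable weight $V(|x|)$ in place of $\tau$ upgrades this to $u(x)\ge c'\Psi(|x|)^{(n-p)/(p(p-1))}U(|x|)$, and hence
\[
|x|^{\frac{n-p}{p-1}}u(x)\ \ge\ c'\bigl(\Psi(|x|)/\log_1(1/|x|)\bigr)^{\frac{n-p}{p(p-1)}}=c'\bigl[\log_2(1/|x|)\cdots(\log_Q(1/|x|))^{\beta}\bigr]^{\frac{n-p}{p(p-1)}}\xrightarrow[x\to 0]{}+\infty,
\]
contradicting $\lim_{x\to 0}|x|^{(n-p)/(p-1)}u(x)=0$.

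The main technical obstacle is the last step: adapting the (fixed-$\tau$) proof of the Item 3 liminf to a variable weight $V(|x|)$. The ODE/averaging analysis underlying Item 3 must be redone with the slowly-varying coefficient $V$, and the hypothesis $\beta>1$ enters through the integrability
\[
\int_0^{r_0}\!\frac{ds}{s\,\Psi(s)}=\int_0^{r_0}\!\frac{ds}{s\log(1/s)\cdots(\log_Q(1/s))^{\beta}}=\frac{(\log_Q(1/r_0))^{1-\beta}}{\beta-1}<\infty,
\]
which is precisely the condition allowing the weighted ODE comparison to close and upgrade the original constant $\tau^{-(n-p)/(p(p-1))}$ to the function $\Psi(|x|)^{(n-p)/(p(p-1))}$ at the $Q$-th iterated logarithm.
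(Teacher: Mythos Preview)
Your Step 1 (applying Theorem \ref{main thm} with $\tau=\epsilon$ and sending $\epsilon\to 0$ to force the liminf in Item 3 to be $+\infty$) is a valid and elegant observation, but it is not actually needed: the paper uses only the fixed lower bound $\bar{w}(t)\ge c\,t^{-1/(q-p+1)}$ coming from Item 3 to estimate the iterated logarithms $\log_j(e^t\bar{w})\sim\log_{j-1}t$, and your Step 2 conclusion $D(u(x))\ge c\,\Psi(|x|)$ already follows from that weaker input. The paper then proceeds more directly than you propose: rather than ``re-running'' the Item 3 argument with a variable $\tau$, it stays in the ODE framework of Section \ref{Sec6} (the variables $\bar{w},\bar{z}$), carries the extra factor $g(t)\sim 1/\Psi$ through an integration by parts of $e^t\bar{z}$, uses the trapping Lemmas \ref{stay in Omega}--\ref{exist in Omega} to control $(\bar{w}+\bar{z})^{p-2}$, and arrives at $-\bar{z}(T)\le C\,g(T)\,\bar{w}(T)^{q-p+2}$. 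Dividing by $\bar{w}^{q-p+2}$ and integrating gives $\bar{w}(T)^{-(q-p+1)}\le C+C\int g<\infty$, contradicting $\bar{w}\to 0$.

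Your Step 3 is the genuine gap. You propose to re-run the Item 3 liminf proof with the variable weight $V(|x|)$ and assert the output $u\ge c'\Psi^{(n-p)/(p(p-1))}U$, but you neither carry this out nor describe the mechanism correctly for $p\ne 2$: there is no spherical-average ODE here (superposition fails for $\Delta_p$), the profile $U(r)=r^{-\eta}(\log 1/r)^{-\eta/p}$ is not an exact solution of $-\Delta_p U=C\,U^{q}$, and the Section \ref{Sec6} argument is a phase-plane trapping analysis for the sup-profile $\bar{u}$, not a comparison with $U$. You do correctly identify that $\beta>1$ enters through the convergence of $\int g(t)\,dt$ (equivalently your $\int_0^{r_0} ds/(s\Psi(s))<\infty$), and that is exactly the integral the paper exploits; but the route from the variable-coefficient inequality to the contradiction goes through the explicit estimates on $I(T)$ and $J(T)$ in Section \ref{Sec7}, not through the heuristic scaling $u\gtrsim (C/V)^{1/(q-p+1)}U$ that you invoke without proof.
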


We state our main approaches in proving Theorem \ref{main thm}  and \ref{strengthened thm}.

Section \ref{Sec2}  gives preliminaries. Notice that to prove the main theorem, the approach of \cite{Tailaferro01, Taliaferro06} does not work since the principle of superposition is no longer available for
$\Delta_{p},p\ne 2$.  Two important theorems we appeal to are Theorem \ref{KM theorem}, which controls positive $p$-superharmonic functions with Wolff potential, and Theorem \ref{Theorem 4 of LMQZ2023b}, which characterizes the singularities of a positive $p$-superharmonic function.

Section \ref{Sec3} through Section \ref{Sec6} are devoted to the proof of Theorem \ref{main thm}.

In Section \ref{Sec3}  we prove the uniform boundness of $u(x)/|x|^{\frac{p-n}{p-1}}$.  We localize the issue by choosing a sequence $x_{k}\to0$ such that $\lim\limits_{k\to\infty}u(x_{k})/|x_{k}|^\frac{p-n}{p-1}=\infty .$
And we do rescaling $v_{k}(\xi)=(\frac{|x_{k}|}{4})^{\frac{n-p}{p-1}}u(x_{k}+\frac{|x_{k}|}{4}\xi),\,\,{\rm for}\,\,\xi\in B(0,2)$
about $x_{k}$ and use Moser's iteration argument to find a contradiction. 

Then  in Section \ref{Sec4}   we further prove that it is not possible that $\lim\limits_{k\to\infty}u(x_{k})/|x_{k}|^\frac{p-n}{p-1}>m $, for any sequence $x_k\to0$.  

In Section \ref{Sec5},  we consider the case when $m=0$ and will prove that if $u(x)$ is bounded near a singularity, then the singularity is removable and if  $u(x)\to \infty $, the singularity is rotationally symmetric.  The key ingredient is
the Harnack inequality in \cite{Serrin 64}. Also, we need a Liouville-type theorem, Theorem \ref{Liouville type thm}.

In Section \ref{Sec6} we finally get the precise asymptotic lower bound of $u(x)$ when $m=0$ and $u(x)\to \infty$ as $x\to 0$. The main difficulty here is we have to find an alternative to the spherical average of $u$ that is widely used in the linear case $p=2$. Instead we consider 
\[\bar{u}(r)=\sup_{\partial B_r}u(x),\,\,\underline{u}(r)=\inf_{\partial B_r}u(x)\label{bar u and underline u}\]
   and resort to the ordinary differential inequalities that $\bar{u}$ and $\underline{u}$ satisfy. Since $\underline{u},\bar{u}$ may not be $C^2$ everywhere, one needs to treat the derivatives very carefully. This section is the most technical part of the paper.

   In Section \ref{Sec7}, we prove Theorem \ref{strengthened thm}, where we mainly follow the approach of \cite[Section 4]{Taliaferro06}.

\section{Preliminaries}\label{Sec2}

In this section, we give some basic knowledge about $p$-superharmonic
functions and Wolff potentials. The reader may refer to \cite{Lindqvist17,Kil-Maly-92,Kil-Maly-94}
for more details. 

\begin{Def}(\cite[Definition 2.5]{Lindqvist17}) Let $\Omega$ be
a domain in $\mathbb{R}^{n}$. We say that $u\in W_{loc}^{1,p}(\Omega)$
is a weak solution to 
\[
-\Delta_{p}u=0,x\in\Omega
\]
 if for each $\varphi\in C_{0}^{\infty}(\Omega)$, there holds
\[
\int_{\Omega}\langle|\nabla u|^{p-2}\nabla u,\nabla\varphi\rangle dx=0.
\]
If in addition, $u$ is continuous, then we say that $u$ is a $p$-harmonic
function. 

\end{Def}

\begin{Def}(\cite[Definition 5.1]{Lindqvist17}) A function $u:\Omega\to(-\infty,\infty]$
is called $p$-superharmonic in $\Omega$, if 
\begin{itemize}
\item $u$ is lower semi-continuous in $\Omega$; 
\item $u\not\equiv\infty$ in $\Omega$; 
\item for each domain $D\Subset\Omega$ the comparison principle holds:
if $h\in C(\bar{D})$ is $p$-harmonic in $D$ and $h|_{\partial D}\le u|_{\partial D},$
then $h\le u$ in $D$. 
\end{itemize}
\end{Def}

The following local integrability result for $p$-superharmonic functions is sharp, which explains why Theorem \ref{Serrin's removable singularity} cannot be applied directly to obtain the desired results in Section \ref{Sec5}. 

\begin{lem}(\cite[Theorem 5.13]{Lindqvist17})\label{integrability of u}\ If $u$ is $p$-superharmonic
in $\Omega,$ then 
\[
\int_{D}|u|^{q}dx<\infty
\]
where $D\Subset\Omega$ and $0\le q<\frac{n(p-1)}{n-p}$ in the case
$1<p\le n.$ In the case $p>n$ the function $u$ is continuous. 

\end{lem}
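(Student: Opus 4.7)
The plan is to derive the integrability via bounded truncations, a weighted Caccioppoli estimate, and Sobolev embedding; the sharp exponent $n(p-1)/(n-p)$ arises as the Sobolev conjugate $p^{*}=np/(n-p)$ multiplied by $(p-1)/p$.

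By lower semi-continuity, $u$ is bounded below on $\bar D$, so after replacing $u$ by $u+c$ (which preserves $p$-superharmonicity) one may assume $u\ge 1$. Set $u_k=\min(u,k)$; since the minimum of two $p$-superharmonic functions is $p$-superharmonic, $u_k\in W^{1,p}_{\mathrm{loc}}(\Omega)\cap L^{\infty}_{\mathrm{loc}}(\Omega)$ by standard regularity theory, and $u_k\nearrow u$ pointwise. For each $\gamma\in(0,(p-1)/p)$, I would establish
\[
\int_{\Omega}\eta^{p}|\nabla u_k^{\gamma}|^{p}\,dx\le C(\gamma,p)\int_{\Omega}|\nabla\eta|^{p}u_k^{\gamma p}\,dx,\qquad \eta\in C_{c}^{\infty}(\Omega),
\]
uniformly in $k$, by testing the distributional inequality $\int|\nabla u_k|^{p-2}\langle\nabla u_k,\nabla\varphi\rangle\,dx\ge 0$ against the admissible test function $\varphi=\eta^{p}u_k^{\gamma p-p+1}$ (the exponent $\alpha:=\gamma p-p+1$ is strictly negative precisely when $\gamma<(p-1)/p$, which is our range; $\varphi$ is bounded because $u_k\ge 1$), expanding $\nabla\varphi$, and applying Young's inequality to absorb the cross term. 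Monotone convergence together with lower semi-continuity of the $W^{1,p}$-norm then yields $u^{\gamma}\in W^{1,p}_{\mathrm{loc}}(\Omega)$.

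For $1<p<n$, the Sobolev embedding $W^{1,p}\hookrightarrow L^{p^{*}}$ then gives $u^{\gamma}\in L^{p^{*}}_{\mathrm{loc}}(\Omega)$, i.e.\ $u\in L^{\gamma p^{*}}_{\mathrm{loc}}(\Omega)$. As $\gamma\uparrow(p-1)/p$, the exponent $\gamma p^{*}=np\gamma/(n-p)$ runs up to $n(p-1)/(n-p)$, covering the whole subcritical range claimed by the lemma. The case $p=n$ is handled by the Moser--Trudinger embedding of $W^{1,n}$ into the exponential class, and for $p>n$ the Morrey embedding applied to the truncations $u_k^{\gamma}$ combined with a diagonal argument and the lower semi-continuity of $u$ gives continuity.

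The main obstacle is the sharpness of the test-function exponent. Young's absorption works only when $\alpha<0$, and the absorption constant $|\alpha|/2=(p-1-\gamma p)/2$ degenerates as $\gamma\uparrow(p-1)/p$, so one must track carefully how $C(\gamma,p)$ blows up and then take $\gamma$ strictly below $(p-1)/p$. This is precisely what forces the strict inequality $q<n(p-1)/(n-p)$ in the conclusion and explains why the borderline exponent itself is not attained --- as is in fact sharp, in view of the fundamental solution $G_{p}(x)=|x|^{(p-n)/(p-1)}$, whose $L^{q}$ norm near the origin diverges exactly at $q=n(p-1)/(n-p)$.
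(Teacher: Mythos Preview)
The paper does not prove this lemma; it is quoted from Lindqvist's lecture notes as a background fact, so there is no in-paper argument to compare against. On its own merits, however, your sketch has a genuine gap.

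The Caccioppoli inequality you derive,
\[
\int_{\Omega} \eta^{p}\,|\nabla u_{k}^{\gamma}|^{p}\,dx\;\le\;C(\gamma,p)\int_{\Omega}|\nabla\eta|^{p}\,u_{k}^{\gamma p}\,dx,
\]
has a right-hand side that is bounded uniformly in $k$ only if you already know $u\in L^{\gamma p}_{\mathrm{loc}}(\Omega)$. A $p$-superharmonic function is a priori merely lower semicontinuous and locally bounded below; membership in $L^{s}_{\mathrm{loc}}$ for any $s>0$ is precisely part of what the lemma asserts. Your appeal to monotone convergence is therefore circular: to conclude $u^{\gamma}\in W^{1,p}_{\mathrm{loc}}$ (hence $u\in L^{\gamma p^{*}}_{\mathrm{loc}}$) you are presupposing $u\in L^{\gamma p}_{\mathrm{loc}}$, and nothing in your set-up anchors the iteration at a positive exponent. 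Taking $\gamma$ small does not help, since even $L^{\varepsilon}_{\mathrm{loc}}$ for tiny $\varepsilon>0$ is not given.

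The missing ingredient is a seed estimate. The standard repair is to run the same test-function computation at the borderline exponent $\alpha=1-p$ (formally $\gamma=0$), for which the right-hand side collapses to $\int|\nabla\eta|^{p}$, independent of $u_{k}$; this yields $\int\eta^{p}|\nabla\log u_{k}|^{p}\le C$ uniformly in $k$. A John--Nirenberg / Moser crossover argument, or equivalently the weak Harnack inequality for nonnegative supersolutions, then produces $u\in L^{\varepsilon}_{\mathrm{loc}}$ for some small $\varepsilon>0$. Only once this starting integrability is in hand does your forward iteration $\gamma p\mapsto \gamma p^{*}$ become legitimate, and it then climbs to the threshold $n(p-1)/(n-p)$ exactly as you describe. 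Your treatment of $p>n$ via Morrey on $u_{k}^{\gamma}$ has the same defect: without a uniform $L^{\gamma p}$ bound on the right, the diagonal argument has no input.
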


From \cite{Kil-Maly-92}, for any $p$-superharmonic function $u$,
there is a nonnegative Radon measure $\mu$ such that 
\begin{equation}
-\Delta_{p}u=\mu,\,\,x\in\Omega.\label{p-superharmonic}
\end{equation}

If $p=2$ the solution of (\ref{p-superharmonic}) can be represented
as 
\begin{equation}
u(x)=h(x)+\int_{\Omega}\Gamma(x-y)d\mu(y)\label{representation formula}
\end{equation}
 where $h(x)$ is a harmonic function and $\Gamma(x)$ is the fundamental
solution for $\Delta$ operator. 

When $p>1$ and $p\ne2$, no formula like (\ref{representation formula})
is available. However, the following important theorem can usually
be used as a substitute. First we define the $p$-Wolff potential
generated by a Radon measure $\mu$, for $1<p\le n$, as 
\[
W_{1,p}^{\mu}(x,r)=\int_{0}^{r}(\frac{\mu(B(x,t))}{t^{n-p}})^{\frac{1}{p-1}}\frac{dt}{t}.
\]

\begin{thm}\label{KM theorem}(\cite[Theorem 1.6]{Kil-Maly-94}) Let
$1<p\le n$. Suppose $u\ge0$ is a nonnegative $p$-superharmonic
function satisfying 
\[
-\Delta_{p}u=\mu,x\in B(x_{0},3r).
\]
 Then 
\[
c_{1}(n,p)W_{1,p}^{\mu}(x_{0},r)\le u(x_{0})\le c_{2}(n,p)(\inf_{B(x_{0},r)}u+W_{1,p}^{\mu}(x_{0},2r)).
\]

\end{thm}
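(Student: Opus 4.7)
The theorem splits into an upper and a lower bound, and I would organize both around the dyadic balls $B_k := B(x_0, 2^{-k}r)$, using that the discrete sum
\[
\sum_{k\ge 0}\Bigl(\frac{\mu(B_k)}{(2^{-k}r)^{n-p}}\Bigr)^{\frac{1}{p-1}}
\]
is comparable, via $t = 2^{-k}r$, to the Wolff integral $W_{1,p}^{\mu}(x_0, r)$. Thus it suffices to produce a one-scale Caccioppoli-type upper estimate and a one-scale lower barrier, then iterate.

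\emph{Upper bound.} This half reduces, in spirit, to a weak-Harnack/Moser iteration adapted to measure data. Testing the weak form of $-\Delta_p u = \mu$ against $\varphi^p (u+\varepsilon)^{1-\gamma}$ with small $\gamma>0$ and a cutoff $\varphi$ between $B_{k+1}$ and $B_k$ produces
\[
\int (u+\varepsilon)^{-\gamma}|\nabla u|^{p}\varphi^{p} \le C\Bigl(\int (u+\varepsilon)^{p-\gamma}|\nabla \varphi|^{p} + \int (u+\varepsilon)^{1-\gamma}\varphi^{p}\,d\mu\Bigr).
\]
Combining this with the Sobolev inequality yields a reverse-H\"older inequality for powers of $u$; iterating through the dyadic radii gives a pointwise bound for $u(x_0)$ by $\inf_{B(x_0,r)} u$ plus a summable contribution from $\mu$ that reassembles exactly into $W_{1,p}^\mu(x_0, 2r)$.

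\emph{Lower bound.} For the deeper half, at each scale $t \in (0, r)$ I would solve the Dirichlet problem $-\Delta_{p} v_{t} = \mu|_{B(x_0, t)}$ on $B(x_0, 2t)$ with $v_t = 0$ on $\partial B(x_0, 2t)$. The comparison principle built into the definition of $p$-superharmonic forces $u \ge v_t$ in $B(x_0, 2t)$, while a direct computation using the radial $p$-Green function yields the scale-invariant estimate $v_t(x_0) \gtrsim (\mu(B(x_0,t))/t^{n-p})^{1/(p-1)}$. Specializing to $t=2^{-k}r$, one would \emph{like} to add these contributions across $k$ and read off the Wolff integral.

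\emph{Main obstacle.} The delicate point is exactly that such addition is illegitimate: unlike the linear case, the sum of two $p$-superharmonic functions is not $p$-superharmonic, which is precisely why no formula of type (\ref{representation formula}) is available for $p\ne 2$. Following the Kilpel\"ainen--Mal\'y strategy, I would replace the naive superposition by an inductive capacity argument, aiming at an inequality of the form
\[
m_{k+1} \ge m_k + c\Bigl(\frac{\mu(B_k)}{(2^{-k}r)^{n-p}}\Bigr)^{\frac{1}{p-1}}, \qquad m_k := \inf_{B_k} u,
\]
by showing that if $m_{k+1}$ were too small relative to the right-hand side, then the sublevel set $\{u\le\lambda\}\cap B_k$ would have large $p$-capacity, which via a standard potential-theoretic inequality would be incompatible with the measure mass $\mu(B_k)$. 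Summing the telescoping inequality yields the Wolff bound. Making the capacity step quantitative, with constants depending only on $n$ and $p$ and not on $\mu$ beyond the Wolff quantities, is the main technical hurdle.
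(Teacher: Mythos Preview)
The paper does not prove this statement at all: Theorem~\ref{KM theorem} is quoted verbatim from \cite[Theorem~1.6]{Kil-Maly-94} in the preliminaries section and is used as a black box throughout (in Lemma~\ref{uniformly bounded} and in the proof of Theorem~\ref{remove the p-thin set}). There is therefore nothing in the paper to compare your proposal against.

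That said, your sketch is a faithful outline of the original Kilpel\"ainen--Mal\'y argument: the dyadic reduction of the Wolff integral, the weak-Harnack/Moser iteration for the upper bound, and---crucially---the recognition that the lower bound cannot be obtained by adding single-scale barriers because superposition fails for $p\ne 2$, so that one must run an inductive capacity argument to produce the telescoping inequality for $m_k=\inf_{B_k}u$. You have correctly identified the main obstacle and the mechanism that circumvents it. If you were asked to supply a proof here, what remains is exactly what you flag: making the capacity step quantitative with constants depending only on $n,p$, which in \cite{Kil-Maly-94} is handled via their notion of $p$-capacity estimates for the sublevel sets and a careful choice of test functions; your proposal stops just short of that computation but points in the right direction.
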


The following theorem is important to us, since it characterizes the  properties of the singularity of  a nonnegative superharmonic function.

\begin{thm}\label{Theorem 4 of LMQZ2023b}(\cite[Theorem 4]{Liu-Ma-Qing-Zhong2}) Assume
$1<p\le n.$ Suppose that $u\ge0$ is a nonnegative $p$-superharmonic
function in $\Omega\subset\mathbb{R}^{n}$ satisfying 
\[
-\Delta_{p}u=\mu,
\]
where $\mu$ is a nonnegative Radon measure in $\Omega$. Then $\forall x_{0}\in\Omega$,
there is a subset $E$ which is $p$-thin for singular behavior at
$x_{0}$ such that 
\[
\lim_{x\to x_{0},x\notin E}\frac{u(x)}{G_{p}(x,x_{0})}=m=\begin{cases}
\frac{p-1}{n-p}(\frac{\mu(\{0\})}{|\mathbb{S}^{n-1}|})^{\frac{1}{p-1}} & {\rm when}\,p\in(1,n),\\
(\frac{\mu(\{0\})}{|\mathbb{S}^{n-1}|})^{\frac{1}{n-1}} & {\rm when}\,p=n,
\end{cases}
\]
where 
\[
G_{p}(x,x_{0})=\begin{cases}
|x-x_{0}|^{\frac{p-n}{p-1}} & {\rm when}\,p\in(1,n),\\
\log\frac{1}{|x-x_{0}|} & {\rm when}\,p=n.
\end{cases}
\]
Moreover, $u(x)\ge mG_{p}(x,x_{0})-c_{0}$ for some $c_{0}$ and all
$x$ in a neighborhood of $x_{0}$. 

\end{thm}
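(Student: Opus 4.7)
The plan is to split the measure $\mu$ into its atomic part at $x_{0}$ and a diffuse remainder, and to extract the singular contribution of the point mass via Theorem \ref{KM theorem} (the two-sided Wolff potential bound) together with the comparison principle for $p$-superharmonic functions. Write $\mu=\mu(\{x_{0}\})\delta_{x_{0}}+\nu$ with $\nu(\{x_{0}\})=0$, so that $\nu(B(x_{0},r))\to 0$ as $r\to 0^{+}$. The fundamental computation is that $U(x):=G_{p}(x,x_{0})$ satisfies
\[
-\Delta_{p}U=|\mathbb{S}^{n-1}|\Bigl(\frac{n-p}{p-1}\Bigr)^{p-1}\delta_{x_{0}}
\]
distributionally for $1<p<n$ (with an analogous identity for $p=n$). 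Matching the atomic mass of $-\Delta_{p}(mG_{p})=m^{p-1}|\mathbb{S}^{n-1}|((n-p)/(p-1))^{p-1}\delta_{x_{0}}$ against $\mu(\{x_{0}\})\delta_{x_{0}}$ pins down the exact value of $m$ appearing in the theorem.

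For the uniform lower bound $u(x)\ge mG_{p}(x,x_{0})-c_{0}$ I would use a comparison argument. Fix $\varepsilon\in(0,m)$ and set $v_{\varepsilon}(x):=(m-\varepsilon)G_{p}(x,x_{0})-C$; then $v_{\varepsilon}$ is $p$-harmonic away from $x_{0}$ and carries a distributional point mass of strength strictly less than $\mu(\{x_{0}\})$. Using lower semicontinuity of $u$, one chooses $C>0$ so that $v_{\varepsilon}\le u$ on $\partial B(x_{0},r_{0})$ for some small $r_{0}>0$, and the standard comparison principle for $p$-superharmonic functions then gives $v_{\varepsilon}\le u$ throughout the punctured ball. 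Letting $\varepsilon\to 0$ yields the claim, with the additive constant absorbed into $c_{0}$.

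For the asymptotic equality outside a $p$-thin set, I would apply the upper half of Theorem \ref{KM theorem} with $r$ comparable to $|x-x_{0}|$ and decompose $W_{1,p}^{\mu}(x,2r)$ via the splitting of $\mu$. A direct integration gives
\[
W_{1,p}^{\mu(\{x_{0}\})\delta_{x_{0}}}(x,r)=\frac{p-1}{n-p}\,\mu(\{x_{0}\})^{1/(p-1)}\bigl(|x-x_{0}|^{(p-n)/(p-1)}-r^{(p-n)/(p-1)}\bigr),
\]
which is comparable to $G_{p}(x,x_{0})$ as $x\to x_{0}$; the sharp constant $m$ (as opposed to mere comparability) comes from the comparison argument of the previous paragraph. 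The remaining contribution is $W_{1,p}^{\nu}(x,\cdot)$, and by the Wiener-type criterion the set of points $x$ where this quantity fails to be $o(G_{p}(x,x_{0}))$ is by definition $p$-thin at $x_{0}$. Combining the upper bound with the lower bound yields $u(x)/G_{p}(x,x_{0})\to m$ along any sequence avoiding this exceptional set.

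The main obstacle is the nonlinearity of the Wolff potential with respect to $\mu$: since $1/(p-1)$ is applied inside the radial integral, $W_{1,p}^{\mu_{1}+\mu_{2}}\neq W_{1,p}^{\mu_{1}}+W_{1,p}^{\mu_{2}}$ in general, so the atomic and diffuse contributions do not simply add. Controlling these cross terms --- ensuring that the point mass dominates with the sharp constant $m$ except on a capacitary-thin set --- is the technical heart of the argument, and is exactly where $p$-thinness enters through Wiener-type integrals of $p$-capacities. Passing from the non-sharp Wolff constants $c_{1},c_{2}$ in Theorem \ref{KM theorem} to the sharp value of $m$ requires the additional comparison-principle input, so the two tools must be used in tandem rather than either one alone.
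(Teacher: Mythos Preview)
The paper does not prove this theorem; it is quoted verbatim from \cite{Liu-Ma-Qing-Zhong2} as a black-box tool in the preliminaries, so there is no proof in the present paper to compare your proposal against. Your outline is in the right spirit for how such a result is obtained (split off the atom, use the Wolff potential estimate of Theorem~\ref{KM theorem} for the upper bound modulo an exceptional set, and a barrier for the sharp lower bound), and you correctly flag the nonlinearity of $W_{1,p}^{\mu}$ as the crux.

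That said, your comparison step for the lower bound is stated too loosely to go through as written. You propose to compare $u$ with $v_{\varepsilon}=(m-\varepsilon)G_{p}(\cdot,x_{0})-C$ on the punctured ball $B(x_{0},r_{0})\setminus\{x_{0}\}$ after arranging $v_{\varepsilon}\le u$ on $\partial B(x_{0},r_{0})$; but the comparison principle on a punctured ball also requires control at the inner ``boundary'' $x_{0}$, where both functions may blow up. Without already knowing that $\liminf_{x\to x_{0}}u(x)/G_{p}(x,x_{0})\ge m-\varepsilon$, you cannot close the comparison on annuli $B(x_{0},r_{0})\setminus \overline{B(x_{0},\rho)}$ and send $\rho\to 0$. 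What is actually needed is a comparison result adapted to measures --- roughly, that $-\Delta_{p}u\ge -\Delta_{p}v_{\varepsilon}$ in the sense of Radon measures together with $u\ge v_{\varepsilon}$ on $\partial B(x_{0},r_{0})$ forces $u\ge v_{\varepsilon}$ in $B(x_{0},r_{0})$ --- and this is a nontrivial ingredient in the nonlinear setting (it is supplied in \cite{Liu-Ma-Qing-Zhong2} and related works, not by the ``standard comparison principle'' alone). So your plan identifies the right pieces, but the passage from the non-sharp constants $c_{1},c_{2}$ of Theorem~\ref{KM theorem} to the exact value $m$ needs more than you have written.
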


In the following sections, the notation  $m$  always refers to the one mentioned in the theorem above.
The readers may refer to \cite{Liu-Ma-Qing-Zhong, Liu-Ma-Qing-Zhong2}
for the definition of $p$-thinness for singular behavior (here we
note that ``$p$-thinness for singular behavior'' is a little different
from the traditional $p$-thinness, used in \cite{Kil-Maly-94}), which
 means roughly that a set is sparse near a certain point in a specific sense.

\section{Uniform boundedness of $u(x)/|x|^{\frac{p-n}{p-1}}$ }\label{Sec3}

To prove the main theorem, one basic step is to prove that $\frac{u(x)}{|x|^{\frac{p-n}{p-1}}}$
is uniformly bounded. Before this, we need two lemmas.

\begin{lem}\label{g+delta}(\cite[Theorem 1.1]{Bidaut Veron})

Under the assumption of Theorem \ref{main thm}, there are a function
$g\in L_{loc}^{1}(B(0,1))$ and a constant $\beta>0$ such that 
\[
-\Delta_{p}u=g+\beta\delta_0.
\]

\end{lem}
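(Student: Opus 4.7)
The plan is to recognize $-\Delta_p u$ as a nonnegative Radon measure on all of $B(0,1)$, isolate its atomic part at the origin, and argue that the remainder has locally integrable density. Since $u>0$ is of class $C^{2}$ with $-\Delta_p u\ge 0$ on $B(0,1)\setminus\{0\}$, it is $p$-superharmonic there; being bounded below, its lower semi-continuous extension $\tilde u(0):=\liminf_{x\to 0}u(x)\in(0,\infty]$ is $p$-superharmonic on the full ball $B(0,1)$, a standard extension statement valid because points have zero $p$-capacity. The Kilpel\"ainen--Mal\'y representation theorem then furnishes a unique nonnegative Radon measure $\mu$ on $B(0,1)$ with $-\Delta_p\tilde u=\mu$ in the distributional sense.

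Set $\beta:=\mu(\{0\})\ge 0$ and let $g:=-\Delta_p u$ be defined classically on $B(0,1)\setminus\{0\}$, extended by $0$ at the origin. On $B(0,1)\setminus\{0\}$ the distribution $-\Delta_p u$ coincides with the continuous function $g$, so $\mu$ agrees with $g\,dx$ away from the origin, and consequently $\mu = g\,dx+\beta\delta_0$, provided we verify that $g\in L^{1}_{loc}(B(0,1))$. This reduces to proving $\int_{B(0,r)}g\,dx<\infty$ for some small $r>0$.

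The differential inequality gives $0\le g\le\tau u^{n(p-1)/(n-p)}$, so the task reduces to local integrability of $u^{n(p-1)/(n-p)}$ near the origin. This is precisely the borderline case of Lemma \ref{integrability of u}, which provides integrability only strictly below $n(p-1)/(n-p)$; the critical exponent must be extracted by exploiting the structure of the inequality itself. The plan is a test-function argument: multiply the inequality by $u^{\alpha}\eta_\varepsilon^{s}$ for suitable $\alpha, s$, where $\eta_\varepsilon\in C_c^\infty(B(0,1))$ is a smooth cutoff identically $1$ outside $B(0,2\varepsilon)$ and vanishing on $B(0,\varepsilon)$; integrate by parts over the annular region where $u\in C^{2}$; apply H\"older and the Sobolev embedding to control the resulting gradient term; and absorb the critical right-hand side into the left-hand side. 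A uniform-in-$\varepsilon$ estimate, followed by Fatou's lemma as $\varepsilon\to 0$, yields the desired $L^1$ bound on $g$.

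The principal obstacle is precisely this criticality of the exponent $n(p-1)/(n-p)$: below the critical exponent, Lemma \ref{integrability of u} closes the argument at once, whereas at the borderline one must carefully tune the exponent $\alpha$ and the cutoff power $s$ to make the absorption go through without losing a power. This optimization is the substance of the argument in \cite[Theorem 1.1]{Bidaut Veron}, which we invoke to complete the proof.
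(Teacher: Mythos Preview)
The paper gives no proof beyond citing \cite[Theorem 1.1]{Bidaut Veron}, so there is little to compare against directly. Your outline via the $p$-superharmonic extension across the origin (valid since a point has zero $p$-capacity when $p\le n$) followed by the Kilpel\"ainen--Mal\'y Radon measure representation is correct and in fact more streamlined than Bidaut-V\'eron's 1989 argument, which predates both of those tools.

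There is, however, a redundancy in your sketch. Once $-\Delta_p\tilde u=\mu$ is known to be a nonnegative Radon measure on all of $B(0,1)$ and $\mu$ restricted to the punctured ball has density $g$, the local integrability of $g$ is automatic: for any $r<1$,
\[
\int_{B(0,r)}g\,dx=\mu\bigl(B(0,r)\setminus\{0\}\bigr)\le\mu\bigl(\overline{B(0,r)}\bigr)<\infty.
\]
Your third paragraph---the test-function argument to force $u^{n(p-1)/(n-p)}\in L^1_{loc}$ and the second appeal to \cite{Bidaut Veron}---is therefore unnecessary. Bidaut-V\'eron's original proof runs in the opposite order precisely because the extension and measure-representation results were not yet available: one first establishes $u^{n(p-1)/(n-p)}\in L^1_{loc}$ directly by the cutoff-and-absorb method you describe, and only then deduces the distributional identity with the Dirac mass, in the spirit of Br\'ezis--Lions. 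You have grafted both routes together when either one alone suffices.

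A side remark: you write $\beta\ge 0$, which is the correct conclusion; the paper's $\beta>0$ appears to be a slip, since $\beta=\mu(\{0\})=0$ exactly when $m=0$ in Theorem~\ref{Theorem 4 of LMQZ2023b}, a case the main theorem expressly allows.
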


\begin{lem}\label{inf upper bound}

Suppose that $u\ge0$ satisfies 
\[
-\Delta_{p}u=\mu
\]
for a nonnegative Radon measure $\mu$. Then there is a constant $C>0$
such that 
\[
\inf_{B(x_{0},\frac{|x_{0}|}{2})}u(x)\le C|x_{0}|^{\frac{p-n}{p-1}}
\]
for each $x_{0}\in B(0,\frac{1}{2})\backslash\{0\}$. 

\end{lem}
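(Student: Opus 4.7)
\emph{Plan.}
The plan is to combine the sharp weak integrability of $p$-superharmonic functions with a one-line measure-theoretic observation. The key input is that any nonnegative $p$-superharmonic function $u$ on $B(0,1)$ belongs to the weak Lebesgue space $L^{\sigma,\infty}(B(0,3/4))$ with $\sigma = n(p-1)/(n-p)$; that is,
\[
\bigl|\{x\in B(0,3/4) : u(x) > \lambda\}\bigr| \le C_{0}\,\lambda^{-\sigma}, \qquad \lambda>0,
\]
for some constant $C_{0}$ depending on $u$. This weak-type estimate is the sharp refinement of Lemma~\ref{integrability of u} at the critical exponent $\sigma$, and is a classical consequence of the Wolff-potential theory underlying Theorem~\ref{KM theorem} (see Kilpel\"ainen--Mal\'y~\cite{Kil-Maly-92}).

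With the weak estimate in hand, the proof is a single calculation. Fix $x_{0}\in B(0,1/2)\setminus\{0\}$ and set $\lambda_{0}:=\inf_{B(x_{0},|x_{0}|/2)}u$. Since $B(x_{0},|x_{0}|/2) \subset B(0, 3|x_{0}|/2) \subset B(0,3/4)$ and $u \ge \lambda_{0}$ on $B(x_{0},|x_{0}|/2)$, one has
\[
c_{n}\,(|x_{0}|/2)^{n} = \bigl|B(x_{0},|x_{0}|/2)\bigr| \le \bigl|\{u\ge\lambda_{0}\}\cap B(0,3/4)\bigr| \le C_{0}\,\lambda_{0}^{-\sigma}.
\]
Solving for $\lambda_{0}$ and using the algebraic identity $n/\sigma = (n-p)/(p-1)$ yields $\lambda_{0} \le C\,|x_{0}|^{-(n-p)/(p-1)} = C\,|x_{0}|^{(p-n)/(p-1)}$ with $C$ depending only on $n$, $p$, and $C_{0}$, which is the claim.

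The main obstacle is the weak-type estimate itself, which is not spelled out in the paper's preliminaries (only its strong form below the critical exponent, Lemma~\ref{integrability of u}, is recorded). To stay inside the paper's toolkit, one would derive it from Theorem~\ref{KM theorem}: on a level set $\{u>\lambda\}$ the lower Wolff-potential bound forces $W_{1,p}^{\mu}(x,r) \gtrsim \lambda$ pointwise, and a Vitali-type covering argument then converts this into the desired upper bound on the measure of the level set, with the sharp exponent $\sigma$, using only the finite total mass $\mu(B(0,1))<\infty$ (automatic for any Radon measure on a relatively compact subdomain, and in the paper's setting guaranteed by Lemma~\ref{g+delta}).
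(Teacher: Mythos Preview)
Your proposal is correct and is essentially the same argument as the paper's: both rest on the weak-$L^{\sigma}$ bound $\sup_{\lambda>0}\lambda^{\sigma}\bigl|\{u>\lambda\}\cap B(0,1/2)\bigr|<\infty$ with $\sigma=n(p-1)/(n-p)$, combined with the observation that the ball $B(x_{0},|x_{0}|/2)$ sits inside the superlevel set $\{u\ge\inf_{B(x_{0},|x_{0}|/2)}u\}$. The only cosmetic differences are that the paper phrases it as a contradiction (produce a sequence violating the weak-$L^{\sigma}$ bound) and invokes \cite[Lemma~1.4]{Bidaut Veron} directly for the weak-type estimate rather than outlining a derivation via Wolff potentials.
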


\begin{proof}

For contradiction, we may assume that there is a sequence $x_{i}\in B(0,\frac{1}{4}),\,\,x_{i}\to0$
such that 
\[
\inf_{B(x_{i},\frac{|x_{i}|}{2})}u(x)\ge i|x_{i}|^{\frac{p-n}{p-1}}.
\]
 Setting $t_{i}=i|x_{i}|^{\frac{p-n}{p-1}},$ we have 
\begin{align*}
t_{i}^{\frac{n(p-1)}{n-p}}|x\in B(0,\frac{1}{2}):u(x)\ge t_{i}| & \ge t_{i}^{\frac{n(p-1)}{n-p}}|B(x_{i},\frac{|x_{i}|}{2})|\ge t_{i}^{\frac{n(p-1)}{n-p}}|x_{i}|^{n}C(n)=C(n)i^{\frac{n(p-1)}{n-p}}.
\end{align*}
 Then we must have $\|u\|_{L^{\frac{n(p-1)}{n-p},\infty}(B(0,\frac{1}{2}))}=\infty$.
However by \cite[Lemma 1.4]{Bidaut Veron}, 
\[
\|u\|_{L^{\frac{n(p-1)}{n-p},\infty}(B(0,\frac{1}{2}))}<\infty,
\]
 which is a contradiction. Then we proved the lemma. 

\end{proof}

\begin{lem}\label{uniformly bounded}

Under the assumption of Theorem \ref{main thm}, $\frac{u(x)}{|x|^{\frac{p-n}{p-1}}}$
is uniformly bounded.

\end{lem}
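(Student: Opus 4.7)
The plan is to argue by contradiction. Suppose the conclusion fails, so there is a sequence $x_k\to 0$ with $u(x_k)|x_k|^{(n-p)/(p-1)}\to\infty$. Following the section outline, I would introduce the blow-up sequence
\[
v_k(\xi):=\Bigl(\tfrac{|x_k|}{4}\Bigr)^{(n-p)/(p-1)}u\!\Bigl(x_k+\tfrac{|x_k|}{4}\xi\Bigr),\qquad \xi\in B(0,2).
\]
The exponent $n(p-1)/(n-p)$ is exactly the one for which both sides of $0\le -\Delta_p u\le\tau u^{n(p-1)/(n-p)}$ transform identically under this rescaling, so each $v_k$ satisfies the same inequality on $B(0,2)$. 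By construction $v_k(0)\to\infty$, while Lemma~\ref{inf upper bound}, applied to $u$ on $B(x_k,|x_k|/2)$ (which pulls back to $B(0,2)$), translates into $\inf_{B(0,2)}v_k\le C$ for a constant $C$ independent of $k$.

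Since $-\Delta_p v_k\ge 0$, each $v_k$ is $p$-superharmonic, and Trudinger's weak Harnack inequality combined with the uniform infimum bound gives $\|v_k\|_{L^q(B(0,1))}\le C_q$ for every $q<q^*:=n(p-1)/(n-p)$. I would then run a Moser iteration on $v_k$: testing $-\Delta_p v_k\le\tau v_k^{q^*}$ against $\phi=v_k^{s}\eta^p$ for $s>0$ and a smooth cutoff $\eta$, integrating by parts, absorbing cross terms by Young's inequality, and applying Sobolev's embedding $W^{1,p}\hookrightarrow L^{p^*}$ with $p^*=np/(n-p)$ yields the energy estimate
\[
\bigl\|v_k^{(s+p-1)/p}\eta\bigr\|_{L^{p^*}}^{p}\le C_s\!\int v_k^{s+p-1}|\nabla\eta|^p + C_s\!\int v_k^{s+q^*}\eta^p.
\]
The critical right-hand term is rewritten as $v_k^{q^*-(p-1)}\cdot(v_k^{(s+p-1)/p})^p$ and, by Hölder with exponents $(n/p,n/(n-p))$ using the identity $(q^*-(p-1))\cdot n/p=q^*$, is bounded by $\bigl(\int_{\operatorname{supp}\eta}v_k^{q^*}\bigr)^{p/n}\cdot\|v_k^{(s+p-1)/p}\eta\|_{L^{p^*}}^p$. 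Provided this factor is less than $1/(2C_s)$, absorption closes the iteration, and a finite bootstrap from the $L^q$ bound produces $\sup_{B(0,1/2)}v_k\le C'$, contradicting $v_k(0)\to\infty$.

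The principal obstacle — and what distinguishes the critical case from the subcritical one — is establishing the smallness of $\int_{\operatorname{supp}\eta} v_k^{q^*}$. The weak-$L^{q^*}$ Bidaut-V\'eron bound (which is scale invariant under this blow-up) and the weak Harnack control both fall just short of an $L^{q^*}$ bound. The decisive ingredient is Lemma~\ref{g+delta}: the Dirac mass $\beta\delta_0$ sits at $x=0$, which under the rescaling corresponds to $\xi=-4x_k/|x_k|$ with $|\xi|=4$, and is therefore pushed outside $B(0,2)$. Consequently the rescaled measures $\mu_k=-\Delta_p v_k$ on $B(0,2)$ are absolutely continuous with total mass $\mu_k(B(0,2))=\int_{B(x_k,|x_k|/2)}g\,dy\to 0$ by the $L^1_{loc}$-integrability of $g$. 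Combined with the Kilpel\"ainen-Mal\'y potential estimate (Theorem~\ref{KM theorem}), this vanishing mass confines any concentration of $v_k^{q^*}$ to arbitrarily small balls, which, by localizing $\eta$ suitably, provides the smallness needed to close the Moser iteration.
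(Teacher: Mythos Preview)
Your overall architecture matches the paper's: contradiction, the same blow-up $v_k$, the observation that the rescaled mass $\int_{B(0,2)}g_k\to 0$ via Lemma~\ref{g+delta}, and a Moser iteration to reach a contradiction. The gap is in the absorption step.

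You propose to absorb the critical term by extracting the factor $\bigl(\int_{\operatorname{supp}\eta}v_k^{q^*}\bigr)^{p/n}$ and making it small. But nothing you have listed yields smallness (or even uniform finiteness) of $\int v_k^{q^*}$. The weak-$L^{q^*}$ bound is scale invariant and misses $L^{q^*}$ by a logarithm; the weak Harnack inequality stops strictly below $q^*$; and the inequality $g_k\le\tau v_k^{q^*}$ only goes the wrong way for this purpose. Your appeal to Theorem~\ref{KM theorem} does not close this: with only $\mu_{g_k}(B(0,2))=\epsilon_k\to 0$ and no a priori decay of $\mu_{g_k}(B(\xi,t))$ in $t$, the Wolff potential $\int_0^r(\mu_{g_k}(B(\xi,t))/t^{n-p})^{1/(p-1)}\,dt/t$ need not be bounded, so you cannot conclude that $v_k^{q^*}$ fails to concentrate near $\xi=0$. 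In fact the decay $\mu_{g_k}(B(0,t))\le Ct^{n-p+\epsilon}$ is exactly what the paper \emph{produces} from the Moser iteration, not an input to it.

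The paper's key trick is to change what gets absorbed. Instead of pulling out $v_k^{q^*-(p-1)}$ and landing on $\int v_k^{q^*}$, write the right-hand side as
\[
\int_{B(0,2R)}(-\Delta_p v_k)\,v_k^{\lambda-(p-1)}\phi^p
\;\le\;\Bigl\|\tfrac{-\Delta_p v_k}{v_k^{p-1}}\Bigr\|_{L^{n/p}(B(0,2R))}\,
\bigl\|v_k^{\lambda}\phi^{p}\bigr\|_{L^{n/(n-p)}},
\]
and observe, using $g_k=-\Delta_p v_k\le\tau v_k^{q^*}$, that
\[
\Bigl\|\tfrac{g_k}{v_k^{p-1}}\Bigr\|_{L^{n/p}}^{n/p}
=\int g_k\Bigl(\tfrac{g_k}{v_k^{q^*}}\Bigr)^{(n-p)/p}
\le \tau^{(n-p)/p}\int_{B(0,2)} g_k\longrightarrow 0.
\]
Thus the absorbing factor is $\|g_k/v_k^{p-1}\|_{L^{n/p}}$, which \emph{is} small, and the iteration closes for all large $k$ without ever needing $\int v_k^{q^*}$ to be small. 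After finitely many iterations the paper reaches $v_k\in L^{\lambda_l}$ with $\lambda_l>\tfrac{n}{p}q^*$, deduces $\mu_{g_k}(B(0,t))\le Ct^{n-p+\epsilon}$ by H\"older, and \emph{then} invokes Theorem~\ref{KM theorem} to bound $v_k(0)$ via the (now finite) Wolff potential. Your proposed use of Kilpel\"ainen--Mal\'y is therefore at the wrong end of the argument: it is the device that converts the higher integrability into a pointwise bound, not a source of smallness for the iteration.
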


\begin{proof}

We argue by contradiction. Assume otherwise, there is a sequence $\{x_{k}\}$
inside the punctured ball such that
\[
\frac{u(x_{k})}{|x_{k}|^{\frac{p-n}{p-1}}}\to\infty,\,{\rm as}\,|x_{k}|\to0.
\]
 We consider the rescaled sequence of functions 
\begin{equation}
v_{k}(\xi)=(\frac{|x_{k}|}{4})^{\frac{n-p}{p-1}}u(x_{k}+\frac{|x_{k}|}{4}\xi),\,\,{\rm for}\,\,\xi\in B(0,2).\label{v_k  definition}
\end{equation}
Then $v_{k}(0)\to\infty$ as $k\to\infty$ while from Lemma \ref{inf upper bound}
we know 
\begin{equation}
\inf_{B(0,1)}v_{k}(\xi)\le C.\label{inf v_k  upper bound}
\end{equation}

We use $\Delta_{p}^{\xi}$ and $\Delta_{p}^{x}$ to denote $p$-Laplace
operator with respect to $(\xi_{1},\cdots,\xi_{n})$ and $(x_{1},\cdots,x_{n})$
coordinates, respectively. Define $g_k(\xi)=-(\frac{|x_{k}|}{4})^{n}\Delta_{p}^{x}u(x_{k}+\frac{|x_{k}|}{4}\xi)$, then we have 
\begin{align}
-\Delta_{p}^{\xi}v_{k}(\xi)&=  g_{k}(\xi)\le \tau v_{k}^{\frac{n(p-1)}{n-p}}(\xi),\,{\rm for}\,\xi\in B(0,2),\label{g_k definition}\\
  \int_{B(0,2)}g_{k}(\xi)d\xi&=\int_{B(x_{k},\frac{|x_{k}|}{2})}g(x)dx\to0\,\,{\rm as}\,k\to\infty.\nonumber 
\end{align}
The reason why $\int_{B(x_{k},\frac{|x_{k}|}{2})}g(x)dx\to0$ as $k\to\infty$
is due to Lemma \ref{g+delta}. We use $\mu_{f}$ to represent the
Radon measure defined by the $L^{1}$ function $f$, i.e. 
\[
\mu_{f}(\Omega)=\int_{\Omega}f(x)dx
\]
 holds for any bounded domain $\Omega$. 

We denote 
\[
\lambda_{k}=|x_{k}|^{\frac{p-n}{p-1}}\to\infty,\,{\rm as}\,k\to\infty.
\]

From Theorem \ref{KM theorem}, we know that 
\[
v_{k}(0)\le c_{2}(n,p)(\inf_{B(0,1)}v_{k}+W_{1,p}^{\mu_{g_{k}}}(0,2)).
\]
From (\ref{inf v_k  upper bound}), we know 
\[
W_{1,p}^{\mu_{g_{k}}}(0,2)\to\infty,\,{\rm as}\,k\to\infty.
\]

On the other hand, we will prove there exists $M>0$ such that 
\[
W_{1,p}^{\mu_{g_{_{k}}}}(0,2)\le M
\]
 and then we get a contradiction. We do this by showing that there
exist positive constants $C$ and $\epsilon$ such that 
\[
\mu_{g_{k}}(B(0,t))\le Ct^{n-p+\epsilon}.
\]
 Once we get this, we know that 
\begin{align*}
W_{1,p}^{\mu_{k}}(0,2) & =\int_{0}^{2}(\frac{\mu_{k}(B(0,t))}{t^{n-p}})^{\frac{1}{p-1}}\frac{dt}{t}\le C^{\frac{1}{p-1}}\int_{0}^{2}t^{\frac{\epsilon}{p-1}}\frac{dt}{t}\le C^{\frac{1}{p-1}}\frac{p-1}{\epsilon}2^{\frac{\epsilon}{p-1}}.
\end{align*}
Following \cite{Brezis}, we choose $R$ and $\lambda$ such that
$0<R\le\frac{1}{2}$ and $\frac{n-p+1}{n-p}(p-1)<\lambda<C(n,p)$.
Let $\phi\in C_{0}^{\infty}(B(0,2R))$ satisfy $|\nabla\phi|\le\frac{C}{R}$
and $\phi\equiv1,$ in $B(0,R)$. In the proof below, all derivatives
are taken with respect to $\xi$. 

Then, since 
\begin{align*}
\int_{B(0,2R)}|\nabla v_{k}|^{p}v_{k}^{\lambda-p}\phi^{p} & =(\frac{p}{\lambda})^{p}\int_{B(0,2R)}|\nabla v_{k}^{\frac{\lambda}{p}}|^{p}\phi^{p}\\
 & \ge(\frac{p}{\lambda})^{p}\int_{B(0,2R)}(2^{1-p}|\nabla(\phi v_{k}^{\frac{\lambda}{p}})|^{p}-(v_{k}^{\frac{\lambda}{p}})^{p}|\nabla\phi|^{p})
\end{align*}
 and by Young's inequality with $\varepsilon=\frac{1}{2(n-p)}$
\begin{align*}
 & \int_{B(0,2R)}|\nabla v_{k}|^{p-2}v_{k}^{\lambda-(p-1)}\phi^{p-1}\nabla v_{k}\nabla\phi\\
\le & \frac{\varepsilon(p-1)}{p}(\int_{B(0,2R)}|\nabla v_{k}|^{p-1}v_{k}^{\frac{(\lambda-p)(p-1)}{p}}\phi^{p-1})^{\frac{p}{p-1}}\\
 & +C(n,p)\int_{B(0,2R)}(v_{k}^{\frac{\lambda}{p}}|\nabla\phi|)^{p},
\end{align*}

we have 
\begin{align}
 & \int_{B(0,2R)}(-\Delta_{p}v_{k})v_{k}^{\lambda-(p-1)}\phi^{p}\nonumber \\
= & \int_{B(0,2R)}|\nabla v_{k}|^{p-2}\nabla v_{k}\nabla(v_{k}^{\lambda-(p-1)}\phi^{p})\nonumber\\
= & (\lambda-p+1)\int_{B(0,2R)}|\nabla v_{k}|^{p}v_{k}^{\lambda-p}\phi^{p}+p\int_{B(0,2R)}|\nabla v_{k}|^{p-2}v_{k}^{\lambda-(p-1)}\phi^{p-1}\nabla v_{k}\nabla\phi\nonumber \\
\ge & (\lambda-(1+\varepsilon)(p-1))\int_{B(0,2R)}|\nabla v_{k}|^{p}v_{k}^{\lambda-p}\phi^{p}-C(n,p)\int_{B(0,2R)}(v_{k}^{\frac{\lambda}{p}}|\nabla\phi|)^{p}\nonumber \\
\ge & (\lambda-(1+\varepsilon)(p-1))(\frac{p}{\lambda})^{p}\int_{B(0,2R)}(2^{1-p}|\nabla(\phi v_{k}^{\frac{\lambda}{p}})|^{p}-(v_{k}^{\frac{\lambda}{p}})^{p}|\nabla\phi|^{p})\nonumber \\
 & -C(n,p)\int_{B(0,2R)}(v_{k}^{\frac{\lambda}{p}}|\nabla\phi|)^{p}\nonumber \\
 \ge & C_{1}(n,p)\int_{B(0,2R)}|\nabla(\phi v_{k}^{\frac{\lambda}{p}})|^{p}-C_{2}(n,p)\int_{B(0,2R)}(v_{k}^{\frac{\lambda}{p}}|\nabla\phi|)^{p}\nonumber\\
\ge & C_{1}(n,p)||v_k^\lambda \phi^p||_{L^{\frac{n}{n-p}}(B(0,2R))}-C_{2}(n,p)\int_{B(0,2R)}(v_{k}^{\frac{\lambda}{p}}|\nabla\phi|)^{p}.\label{lower bdd}
\end{align}

On the other hand, 
\begin{align}
\int_{B(0,2R)}(-\Delta_{p}v_{k})v_{k}^{\lambda-(p-1)}\phi^{p} & =\int_{B(0,2R)}(\frac{-\Delta_{p}v_{k}}{v_{k}^{p-1}})v_{k}^{\lambda}\phi^{p}\nonumber \\
 & \le\|\frac{-\Delta_{p}v_{k}}{v_{k}^{p-1}}\|_{L^\frac{n}{p}(B(0,2R))}\|v_{k}^{\lambda}\phi^{p}\|_{L^\frac{n}{n-p}(B(0,2R))}.\label{upper bdd}
\end{align}
 Notice that 
\begin{align*}
\|\frac{-\Delta_{p}v_{k}}{v_{k}^{p-1}}\|_{L^\frac{n}{p}({B(0,2R)})}^{\frac{n}{p}} & =\int_{B(0,2R)}(-\frac{\Delta_{p}v_{k}}{v_{k}^{p-1}})^{\frac{n}{p}}\\
 & =\int_{B(0,2R)}-\Delta_{p}v_{k}(\frac{-\Delta_{p}v_{k}}{v_{k}^{\frac{n(p-1)}{n-p}}})^{\frac{n-p}{p}}\\
 & \le \tau^\frac{n-p}{p}\int_{B(0,1)}g_{k}\to0
\end{align*}
 as $k\to\infty$. 

Then combining (\ref{lower bdd}) and (\ref{upper bdd}), we find
that there exist a positive integer $k_{0}$ and a positive constant
$C(n,p)$ such that for $k\ge k_{0}$
\[
\|v_{k}^{\lambda}\|_{L^{\frac{n}{n-p}}(B(0,R))}\le\frac{C(n,p)}{R^{p}}\|v_{k}^{\lambda}\|_{L^{1}(B(0,2R))},
\]
or 
\begin{equation}
\|v_{k}\|_{L^{\frac{n\lambda}{n-p}}(B(0,R))}\le(\frac{C(n,p)}{R^{p}})^{\frac{1}{\lambda}}\|v_{k}\|_{L^{\lambda}(B(0,2R))}.\label{iteration}
\end{equation}

We let 
\[
\lambda_{j}=\frac{(n-1)(p-1)}{n-p}(\frac{n}{n-p})^{j},\,\,{\rm for}\,\,j=0,1,2,\cdots.
\]
According to \cite[Proposition 4.3.9]{Veron2017}, we have 
\begin{align*}
\int_{B(0,R)}v_{k}^{\lambda_{0}}(\xi) & =\int_{0}^{\infty}|\{\xi\in B(0,R);v_{k}^{\lambda_{0}}>t\}|dt\\
 & =\int_{0}^{\infty}|\{\xi\in B(0,R);v_{k}>t^\frac{1}{{\lambda_{0}}}\}|dt\\
 & \le c(n,p)\int_{0}^{\infty}\min\{|B(0,R)|,\frac{|\mu_{g_{k}}(B(0,2))|^{\frac{n}{n-p}}}{t^{\frac{n}{n-1}}}\}dt\\
 & \le C(n,p).
\end{align*}
\label{v_k integral estimate}

We fix some $l$ such that $\lambda_{l}>\frac{n^{2}(p-1)}{p(n-p)}$.
Then by iteration $l$ times we have 
\begin{align*}
\|v_{k}\|_{L^{\lambda_{l}}(B(0,\frac{R}{2^{l}}))} & \le(\frac{C(n,p)}{R^{p}})^{\frac{1}{\lambda_{0}}+\cdots+\frac{1}{\lambda_{l-1}}}\|v_{k}\|_{L^{\lambda_{0}}(B(0,R))}\\
 & \le C(n,p,l).
\end{align*}
Then we have 
\begin{align}
\mu_{g_{k}}(B(0,t)) & =\int_{B(0,t)}g_{k}(\xi)d\xi\le C\int_{B(0,t)}v_{k}^{\frac{n(p-1)}{n-p}}(\xi)d\xi\nonumber \\
 & \le(\int_{B(0,t)}(v_{k}^{\frac{n(p-1)}{n-p}})^{\frac{\lambda_{l}}{\frac{n(p-1)}{n-p}}})^{\frac{\frac{n(p-1)}{n-p}}{\lambda_{l}}}|B(0,t)|^{1-\frac{\frac{n(p-1)}{n-p}}{\lambda_{l}}}\nonumber \\
 & \le C(n,p,l)t^{n-p+\epsilon}.\label{mu_g_k estimate}
\end{align}

\end{proof}

\section{Remove the $p$-thin set}\label{Sec4}

In this section, we will prove the following theorem.

\begin{thm}\label{remove the p-thin set}

Suppose $u$ is a solution of (\ref{main differential inequality}).
Then 
\begin{equation}
\lim_{x\to0}\frac{u(x)}{|x|^{\frac{p-n}{p-1}}}=m\ge0,\label{limit is m>0}
\end{equation}
where $m$ is given in Theorem \ref{Theorem 4 of LMQZ2023b}.

\end{thm}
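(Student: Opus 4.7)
The plan is to rule out the $p$-thin exceptional set $E$ from Theorem \ref{Theorem 4 of LMQZ2023b} by a blow-up argument centered on a potential bad sequence. Theorem \ref{Theorem 4 of LMQZ2023b} already supplies the pointwise bound $u(x)\ge m|x|^{(p-n)/(p-1)}-c_0$, whence $\liminf_{x\to 0}u(x)/|x|^{(p-n)/(p-1)}\ge m$, while Lemma \ref{uniformly bounded} gives $M:=\limsup_{x\to 0}u(x)/|x|^{(p-n)/(p-1)}<\infty$. It therefore suffices to establish $M\le m$.

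Suppose for contradiction $M>m$, and pick $x_k\to 0$ with $u(x_k)/|x_k|^{(p-n)/(p-1)}\to M$. Consider the rescaling of Section \ref{Sec3}, $v_k(\xi)=(|x_k|/4)^{(n-p)/(p-1)}u(x_k+(|x_k|/4)\xi)$. Setting $y_k:=x_k+(|x_k|/4)\xi$ and $e_k:=x_k/|x_k|$, the relation $|y_k|=(|x_k|/4)|4e_k+\xi|$ gives the identity
\[
v_k(\xi)=|4e_k+\xi|^{(p-n)/(p-1)}\cdot\frac{u(y_k)}{|y_k|^{(p-n)/(p-1)}},
\]
so Lemma \ref{uniformly bounded} bounds $v_k$ uniformly on $B(0,3/2)$, and $v_k(0)\to 4^{(p-n)/(p-1)}M$. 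From Lemma \ref{g+delta} and absolute continuity of $g\in L^1_{\rm loc}$, $-\Delta_p v_k=g_k$ with $g_k\ge 0$ and $\|g_k\|_{L^1(B(0,2))}=\|g\|_{L^1(B(x_k,|x_k|/2))}\to 0$; since $g_k\le\tau v_k^{n(p-1)/(n-p)}$ is uniformly $L^\infty$-bounded, interpolation gives $g_k\to 0$ in every $L^s$. Standard $C^{1,\alpha}$ regularity for $p$-Laplace equations with bounded right-hand side then supplies uniform local estimates, so along a subsequence $v_k\to v_\infty$ in $C^1_{\rm loc}(B(0,3/2))$ and $e_k\to e_\infty\in\mathbb{S}^{n-1}$; stability of weak $p$-Laplace solutions forces $-\Delta_p v_\infty=0$ on $B(0,3/2)$. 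Rescaling the lower bound $u\ge m|\cdot|^{(p-n)/(p-1)}-c_0$ and passing to the limit yields $v_\infty(\xi)\ge\underline v(\xi):=m|4e_\infty+\xi|^{(p-n)/(p-1)}$ on $B(0,1)$; note that $\underline v$ is itself $p$-harmonic on $B(0,1)$ with $\underline v(0)=m\cdot 4^{(p-n)/(p-1)}<M\cdot 4^{(p-n)/(p-1)}=v_\infty(0)$.

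It remains to upgrade this inequality to an equality $v_\infty\equiv\underline v$ on $B(0,1)$, which contradicts the strict inequality at the origin. Fix $\xi_0\in B(0,1)$. Since the $p$-thin set $E$ of Theorem \ref{Theorem 4 of LMQZ2023b} is sparse near $0$ in the sense of \cite{Liu-Ma-Qing-Zhong,Liu-Ma-Qing-Zhong2}---in particular its complement is dense in every deleted neighborhood of the origin---one may choose $\xi_k\to\xi_0$ such that $y_k=x_k+(|x_k|/4)\xi_k\notin E$ for every $k$. Combining the identity above, the uniform $C^1_{\rm loc}$ convergence of $v_k$, and Theorem \ref{Theorem 4 of LMQZ2023b} applied along $y_k\notin E$ (which yields $u(y_k)/|y_k|^{(p-n)/(p-1)}\to m$), we obtain
\[
v_\infty(\xi_0)=\lim_{k\to\infty}|4e_k+\xi_k|^{(p-n)/(p-1)}\cdot\frac{u(y_k)}{|y_k|^{(p-n)/(p-1)}}=m|4e_\infty+\xi_0|^{(p-n)/(p-1)}=\underline v(\xi_0),
\]
which closes the argument. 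The main technical obstacle is this last step: verifying that the $p$-thin set $E$ really is sparse enough near $0$ to admit such approximating points, and carefully transporting the pointwise convergence along moving sample points $\xi_k\to\xi_0$ through the $C^1_{\rm loc}$ compactness of the rescaled family $\{v_k\}$.
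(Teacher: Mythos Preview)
Your approach is genuinely different from the paper's and, modulo the point you flag at the end, sound. The paper does not blow up to a $p$-harmonic limit; instead it applies the Kilpel\"ainen--Mal\'y pointwise estimate (Theorem~\ref{KM theorem}) to $u(y)-\inf_{B(y,\frac{3\alpha_0}{4}|y|)}u$, controls all the infimum terms within $\varepsilon$ of $m$ via Lemma~\ref{close to m}, and is left with showing $W_{1,p}^{\mu}(y,\tfrac{\alpha_0}{2}|y|)/|y|^{(p-n)/(p-1)}\to 0$. That Wolff-potential decay is then obtained by the same rescaling $v_k$ and the mass estimate $\mu_{g_k}(B(0,t))\le Ct^{n-p+\epsilon}$ already established in (\ref{mu_g_k estimate}). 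So the paper stays entirely within potential theory, while you trade the Wolff potential for $C^{1,\alpha}$ compactness and a rigidity statement for the limit; your route is arguably more conceptual but imports the (nontrivial) regularity theory and a comparison principle.

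On your acknowledged obstacle: as written, choosing $\xi_k\to\xi_0$ with $y_k\notin E$ for \emph{every} $\xi_0$ asks that $E$ miss balls of relative radius tending to $0$, which is more than the $p$-thinness the paper actually uses (namely $B(y,\alpha|y|)\not\subset E$ for a \emph{fixed} $\alpha$ and small $|y|$, as in Lemma~\ref{close to m}). You can sidestep this entirely. The weaker property already gives, for each $k$, some $\xi_k$ in a fixed ball $B(0,c)$ with $y_k\notin E$; passing to a subsequence $\xi_k\to\xi_*$ you obtain a single contact point $v_\infty(\xi_*)=\underline v(\xi_*)$. If $m=0$ then $\underline v\equiv 0$ and the Harnack inequality for nonnegative $p$-harmonic functions forces $v_\infty\equiv 0$, contradicting $v_\infty(0)>0$. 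If $m>0$ then $\underline v$ has nowhere-vanishing gradient on $B(0,3/2)$, so the strong comparison principle for $p$-harmonic functions (e.g.\ Damascelli--Sciunzi) gives $v_\infty\equiv\underline v$ from $v_\infty\ge\underline v$ and the single touching point, again a contradiction. This closes the gap without needing any finer density of $E^c$.
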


One corollary is 

\begin{cor}

If $m>0$, 
\[
\lim_{x\to0}\frac{u(x)}{\underline{u}(|x|)}=1,
\]where $\underline{u}$ is defined by (\ref{bar u and underline u}).

\end{cor}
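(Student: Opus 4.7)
The plan is to derive the corollary directly from Theorem \ref{remove the p-thin set} as essentially a one-line consequence. The key observation is that the limit $\lim_{x\to 0} u(x)/|x|^{(p-n)/(p-1)} = m$ asserted there is unconditional: in contrast to Theorem \ref{Theorem 4 of LMQZ2023b}, no $p$-thin exceptional set is excluded. This unrestricted convergence automatically provides uniform control of $u$ on each sufficiently small sphere $\partial B(0,r)$, which is exactly what is needed to pinch $\underline{u}(r)$.

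Making this precise, I would fix any $\epsilon\in(0,m)$ and choose $\delta>0$ small enough that
$$
(m-\epsilon)|x|^{(p-n)/(p-1)} \le u(x) \le (m+\epsilon)|x|^{(p-n)/(p-1)}\quad\text{for }0<|x|<\delta.
$$
Taking the infimum over $\partial B(0,r)$ squeezes $\underline{u}(r)$ between the same two envelopes $(m\pm\epsilon)r^{(p-n)/(p-1)}$, and dividing the first bounds by the second yields
$$
\frac{m-\epsilon}{m+\epsilon}\le\frac{u(x)}{\underline{u}(|x|)}\le\frac{m+\epsilon}{m-\epsilon}
$$
for $0<|x|<\delta$. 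Sending $\epsilon\to 0^{+}$ closes the sandwich and gives the claim.

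There is no substantial obstacle to the argument; all of the real work has been absorbed into the proof of Theorem \ref{remove the p-thin set}, namely the removal of the $p$-thin exceptional set from the conclusion of Theorem \ref{Theorem 4 of LMQZ2023b}. The hypothesis $m>0$ enters only to ensure that both envelopes $(m\pm\epsilon)r^{(p-n)/(p-1)}$ are positive and that the ratio $u(x)/\underline{u}(|x|)$ is well defined in the limit; this is precisely why the same reasoning cannot be pushed through when $m=0$, which is the scenario that demands the more delicate analysis of Sections \ref{Sec5} and \ref{Sec6}.
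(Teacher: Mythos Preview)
Your argument is correct and is exactly the immediate deduction the paper has in mind: the corollary is stated without proof right after Theorem \ref{remove the p-thin set}, so the intended derivation is precisely the sandwich you wrote, using the unrestricted limit $u(x)/|x|^{(p-n)/(p-1)}\to m$ together with $m>0$ to pinch $u(x)/\underline{u}(|x|)$ between $(m-\epsilon)/(m+\epsilon)$ and $(m+\epsilon)/(m-\epsilon)$.
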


The proof of Theorem \ref{remove the p-thin set} also relies on the following lemma.

\begin{lem}\label{close to m}

Under the assumptions in Theorem \ref{main thm}, for any $\varepsilon>0$,
there are $\delta,\alpha_{0}>0$ such that if $0<|y|<\delta$, $\frac{\alpha_{0}}{4}\le\alpha\le\alpha_{0}$
\[
m(1-\varepsilon)\le\frac{\inf_{B(y,\alpha|y|)}u(x)}{|y|^{\frac{p-n}{p-1}}}\le(m+\varepsilon)(1+\varepsilon).
\]

\end{lem}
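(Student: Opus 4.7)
The plan is to deduce both inequalities from Theorem \ref{Theorem 4 of LMQZ2023b}, which supplies (i) the pointwise lower estimate $u(x)\ge m|x|^{(p-n)/(p-1)}-c_{0}$ in a punctured neighbourhood of $0$, and (ii) the thin-set limit $u(x)/|x|^{(p-n)/(p-1)}\to m$ as $x\to 0$, $x\notin E$, where $E$ is $p$-thin for singular behaviour at $0$. Throughout I fix $\varepsilon>0$ and a small $\alpha_{0}>0$ (to be determined so that the ``dilation factors'' produced by $x$ ranging in $B(y,\alpha|y|)$ are within $O(\varepsilon)$ of $1$), and then choose $\delta$ small enough to absorb the $c_{0}$ term.

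For the lower inequality, take any $x\in B(y,\alpha|y|)$ with $\alpha\le\alpha_{0}$. Then $(1-\alpha_{0})|y|\le|x|\le(1+\alpha_{0})|y|$, and since $(p-n)/(p-1)<0$,
\[
|x|^{(p-n)/(p-1)}\ge(1+\alpha_{0})^{(p-n)/(p-1)}|y|^{(p-n)/(p-1)}.
\]
Plugging into (i) and dividing by $|y|^{(p-n)/(p-1)}$ gives
\[
\frac{u(x)}{|y|^{(p-n)/(p-1)}}\ge m(1+\alpha_{0})^{(p-n)/(p-1)}-c_{0}|y|^{(n-p)/(p-1)}.
\]
Choosing $\alpha_{0}$ so that $(1+\alpha_{0})^{(p-n)/(p-1)}\ge 1-\varepsilon/2$ (valid since the function is continuous in $\alpha_0$ and equals $1$ at $\alpha_0=0$) and $\delta$ so that $c_{0}\delta^{(n-p)/(p-1)}\le m\varepsilon/2$ (the case $m=0$ is trivial, since the target lower bound reduces to $0$ and $u\ge0$), we obtain $u(x)/|y|^{(p-n)/(p-1)}\ge m(1-\varepsilon)$ uniformly in $x\in B(y,\alpha|y|)$.

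For the upper inequality, the strategy is to find a single point $x^{*}\in B(y,\alpha|y|)\setminus E$ and apply (ii) there. Because $E$ is $p$-thin for singular behaviour at $0$, its density near the origin decays sufficiently fast that, for every $\alpha>0$, $B(y,\alpha|y|)\setminus E$ is nonempty once $|y|$ is small (this is the key sparseness consequence of the thin-set definition from \cite{Liu-Ma-Qing-Zhong2}; roughly, the relative Lebesgue density of $E$ in such annular balls tends to $0$ as $|y|\to 0$). Fix such an $x^{*}$; from (ii), $u(x^{*})\le(m+\varepsilon)|x^{*}|^{(p-n)/(p-1)}$ provided $\delta$ is small enough, and since $|x^{*}|\ge(1-\alpha_{0})|y|$,
\[
\frac{u(x^{*})}{|y|^{(p-n)/(p-1)}}\le(m+\varepsilon)(1-\alpha_{0})^{(p-n)/(p-1)}.
\]
Shrinking $\alpha_{0}$ further so that $(1-\alpha_{0})^{(p-n)/(p-1)}\le 1+\varepsilon$ yields $\inf_{B(y,\alpha|y|)}u\le(m+\varepsilon)(1+\varepsilon)|y|^{(p-n)/(p-1)}$, completing the proof.

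The main obstacle is the step asserting $B(y,\alpha|y|)\setminus E\neq\emptyset$ for small $|y|$ and $\alpha\in[\alpha_{0}/4,\alpha_{0}]$. This is not a pointwise statement; it is a quantitative density consequence of the notion of ``$p$-thinness for singular behaviour'' from \cite{Liu-Ma-Qing-Zhong2}, and must be extracted from the capacity/Wolff-potential decay built into that definition, rather than from standard (Kilpel\"ainen--Mal\'y) $p$-thinness, which a priori only guarantees the existence of \emph{some} sequence $z_{k}\to 0$ outside $E$. Once this density-type statement is in hand, the rest of the argument is an elementary dilation calculation.
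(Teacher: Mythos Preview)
Your proof is correct and follows essentially the same route as the paper: both arguments derive the lower bound from the pointwise estimate $u(x)\ge m|x|^{(p-n)/(p-1)}-c_{0}$ together with a dilation comparison of $|x|$ and $|y|$, and both obtain the upper bound by invoking the $p$-thinness of $E$ to guarantee $B(y,\tfrac{\alpha_{0}}{4}|y|)\not\subset E$ for small $|y|$, then using the thin-set limit at a point of the ball outside $E$. The paper likewise states the key sparseness fact (``Since $E$ is $p$-thin, we can choose $\delta>0$ such that $B(y,\tfrac{\alpha_{0}}{4}|y|)\not\subset E$'') without further justification, so your identification of this as the one nontrivial input matches the paper exactly.
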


\begin{proof}$\forall\varepsilon>0$, we let $\alpha_{0}=\min\{1-\frac{1}{(1+\varepsilon)^{\frac{p-1}{n-p}}},\frac{1}{(1-\varepsilon)^{\frac{p-1}{n-p}}}-1\}.$
From \cite[Theorem 4]{Liu-Ma-Qing-Zhong2} there is a set $E$ which
is $p$-thin at $0$ such that 
\[
\lim_{x\to0,x\notin E}\frac{u(x)}{|x|^{\frac{p-n}{p-1}}}=\liminf_{x\to0}\frac{u(x)}{|x|^{\frac{p-n}{p-1}}}=m.
\]
Since $E$ is $p$-thin, we can choose $\delta>0$ such that when
$0<|y|<\delta$, $B(y,\frac{\alpha_{0}}{4}|y|)\not\subset E$. Then
we know that 
\[
\lim_{y\to0}\inf_{B(y,\frac{\alpha_{0}}{4}|y|)}\frac{u(x)}{|x|^{\frac{p-n}{p-1}}}=m.
\]
Then we may choose $\delta$ even smaller, such that when $0<|y|<\delta$,
$\inf_{B(y,\frac{\alpha_{0}}{4}|y|)}\frac{u(x)}{|x|^{\frac{p-n}{p-1}}}\le m+\varepsilon$. 

Let $\inf_{B(y,\alpha|y|)}\frac{u(x)}{|x|^{\frac{p-n}{p-1}}}=\frac{u(x_{0})}{|x_{0}|^{\frac{p-n}{p-1}}}$
for some $x_{0}\in\bar{B}(y,\alpha|y|)$. Then $\frac{|y|}{|x_{0}|}\in[\frac{1}{1+\alpha},\frac{1}{1-\alpha}]$
and 
\[
\frac{\inf_{B(y,\alpha|y|)}u}{|y|^{\frac{p-n}{p-1}}}\le\frac{u(x_{0})}{|y|^{\frac{p-n}{p-1}}}=\frac{u(x_{0})}{|x_{0}|^{\frac{p-n}{p-1}}}\cdot\frac{|x_{0}|^{\frac{p-n}{p-1}}}{|y|^{\frac{p-n}{p-1}}}\le\frac{u(x_{0})}{|x_{0}|^{\frac{p-n}{p-1}}}(\frac{1}{1-\alpha})^{\frac{n-p}{p-1}}\le(m+\varepsilon)(1+\varepsilon).
\]
Next let $\inf_{B(y,\alpha|y|)}u(x)=u(y_{0})$ for some $y_{0}\in\bar{B}(y,\alpha|y|)$.
Then $\frac{|y|}{|y_{0}|}\in[\frac{1}{1+\alpha},\frac{1}{1-\alpha}]$
and 
\[
\frac{\inf_{B(y,\alpha|y|)}u}{|y|^{\frac{p-n}{p-1}}}=\frac{u(y_{0})}{|y|^{\frac{p-n}{p-1}}}=\frac{u(y_{0})}{|y_{0}|^{\frac{p-n}{p-1}}}\frac{|y_{0}|^{\frac{p-n}{p-1}}}{|y|^{\frac{p-n}{p-1}}}\ge m(\frac{1}{1+\alpha})^{\frac{n-p}{p-1}}\ge m(1-\varepsilon).
\]

\end{proof}

\begin{proof}(of Theorem \ref{remove the p-thin set}) Let $\alpha_{0}$ be the constant appearing in Lemma \ref{close to m}. We apply Theorem
\ref{KM theorem} to $u(y)-\inf_{B(y,\frac{3\alpha_{0}}{4}|y|)}u(x)$ and obtain 
\[
u(y)-\inf_{B(y,\frac{3\alpha_{0}}{4}|y|)}u(x)\le c_{2}\inf_{B(y,\frac{\alpha_{0}}{4}|y|)}(u-\inf_{B(y,\frac{3\alpha_{0}}{4}|y|)}u)+c_{2}W_{1,p}^{\mu}(y,\frac{\alpha_{0}}{2}|y|).
\]
Hence 
\begin{align*}
\frac{u(y)}{|y|^{\frac{p-n}{p-1}}}\le & \frac{\inf_{B(y,\frac{3\alpha_{0}}{4}|y|)}u(x)}{|y|^{\frac{p-n}{p-1}}}+c_{2}\frac{\inf_{B(y,\frac{\alpha_{0}}{4}|y|)}u(x)}{|y|^{\frac{p-n}{p-1}}}\\
 & -c_{2}\frac{\inf_{B(y,\frac{3\alpha_{0}}{4}|y|)}u}{|y|^{\frac{p-n}{p-1}}}+c_{2}\frac{W_{1,p}^{\mu}(y,\frac{\alpha_{0}}{2}|y|)}{|y|^{\frac{p-n}{p-1}}},
\end{align*}
which implies, by Lemma \ref{close to m},
\[
\limsup_{y\to0}\frac{u(y)}{|y|^{\frac{p-n}{p-1}}}\le m+\varepsilon C(m)+c_{2}\limsup_{y\to0}\frac{W_{1,p}^{\mu}(y,\frac{\alpha_{0}|y|}{2})}{|y|^{\frac{p-n}{p-1}}}.
\]
On the other hand,
\[
\frac{u(y)}{|y|^{\frac{p-n}{p-1}}}\ge m.
\]
To finish the proof, we just need to show
\[
\lim_{x\to0}\frac{W_{1,p}^{\mu}(x,\frac{1}{2}|x|)}{|x|^{\frac{p-n}{p-1}}}=0.
\]

Suppose it were not true, then there exist a sequence $x_{k}\to0$ and $\delta>0$
such that 
\[
\frac{W_{1,p}^{\mu}(x_{k},\frac{1}{2}|x_{k}|)}{|x_{k}|^{\frac{p-n}{p-1}}}\ge\delta.
\]
Then we define $v_{k},g_{k}$ as in (\ref{v_k  definition})(\ref{g_k definition})
and $\mu_{g_{k}}(\Omega)=\int_{\Omega}g_{k}$. It is easy to calculate
that 
\[
\frac{W_{1,p}^{\mu}(x_{k},\frac{1}{2}|x_{k}|)}{(\frac{|x_{k}|}4)^{\frac{p-n}{p-1}}}=W_{1,p}^{\mu_{g_{k}}}(0,2),\,\,\mu_{g_{k}}(B(0,2))\to0\,\,{\rm as}\,\,k\to\infty.
\]
From (\ref{mu_g_k estimate}), after choosing 
\[
\rho=(\frac{\mu_{g_{k}}(B(0,2))}{C(n,p,l)})^{\frac{1}{n-p+\varepsilon}},
\]
 we calculate

\begin{align*}
W_{1,p}^{\mu_{g_{k}}}(0,2) & =\int_{0}^{2}(\frac{\mu_{g_{k}}(B(0,t))}{t^{n-p}})^{\frac{1}{p-1}}\frac{dt}{t}\\
 & =\int_{0}^{\rho}+\int_{\rho}^{2}(\frac{\mu_{g_{k}}(B(0,t))}{t^{n-p}})^{\frac{1}{p-1}}\frac{dt}{t}\\
 & \le C(n,p,l)\mu_{g_{k}}(B(0,2))^{\frac{\varepsilon}{(n-p+\varepsilon)(p-1)}}\\
 & \to0\,\,{\rm as}\,\,k\to\infty,
\end{align*}
which is a contradiction. Then (\ref{limit is m>0}) is proved.

\end{proof}

\section{Asymptotic behavior of the singularity  when $m=0$}\label{Sec5}

In this section, we will prove the following theorem.

\begin{thm}\label{asymptotic symmetry when m=00003D0}

Suppose $u$ is a $C^{2}$ nonnegative solution of (\ref{main differential inequality})
in $B(0,1)\backslash\{0\}$
and $m=0$. Then 
\begin{equation}
\lim_{x\to0}\frac{u(x)}{\underline{u}(|x|)}=1,\label{limit 1}
\end{equation}where $\underline{u}$ is defined by (\ref{bar u and underline u}).
Moreover, either $u(x)\to\infty$ as $x\to0$ or $u(0)\in(0,\infty)$
and $u$ can be extended to $C^{1,\alpha}(B(0,1))$.

\end{thm}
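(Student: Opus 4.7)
The plan is to split according to whether $u$ is bounded in a deleted neighborhood of $0$, and in the unbounded case to obtain both the spherical symmetry and the uniform blow-up via a single rescaling $+$ Liouville argument.

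\textbf{Bounded case.} Suppose $u$ is bounded near $0$. Then $\tau u^{n(p-1)/(n-p)}$ is bounded, so the structural hypotheses \eqref{structure condition 1}--\eqref{structure condition 2} hold with coefficients in the required Lebesgue classes. Theorem \ref{Serrin's removable singularity} leaves two alternatives: either the singularity is removable, or $u(x)\asymp|x|^{(p-n)/(p-1)}$ near $0$; boundedness rules out the second. Hence $u$ extends continuously to $B(0,1)$ and solves $-\Delta_{p}u=f$ with $f\in L^\infty$, so standard interior $C^{1,\alpha}$ regularity for the $p$-Laplace operator (DiBenedetto, Tolksdorf) yields $u\in C^{1,\alpha}(B(0,1))$. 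The strong minimum principle applied to the $p$-superharmonic $u\ge 0$ forbids $u(0)=0$, so $u(0)\in(0,\infty)$, and $u(x)/\underline u(|x|)\to 1$ by continuity.

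\textbf{Unbounded case: symmetry via blow-up.} Now suppose $u$ is unbounded near $0$. For an arbitrary sequence $y_k\to 0$ set $r_k=|y_k|$, $\xi_k=y_k/r_k\in S^{n-1}$, and rescale
\[
v_k(\xi)=\frac{u(r_k\xi)}{\underline u(r_k)}.
\]
A direct computation converts \eqref{main differential inequality} into
\[
0\le-\Delta_p v_k(\xi)\le A_k\,v_k(\xi)^{n(p-1)/(n-p)},\qquad A_k=\tau\bigl(r_k^{(n-p)/(p-1)}\underline u(r_k)\bigr)^{p(p-1)/(n-p)}.
\]
Combining $m=0$ with Lemma \ref{close to m} (which uses $p$-thinness of the exceptional set of Theorem \ref{Theorem 4 of LMQZ2023b} to force small values of $u/|x|^{(p-n)/(p-1)}$ on every ball $B(y,\alpha|y|)$) gives $r^{(n-p)/(p-1)}\underline u(r)\to 0$, hence $A_k\to 0$. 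By construction $\inf_{S^{n-1}} v_k=1$. The vanishing coefficient $A_k$ keeps Serrin's Harnack inequality \cite{Serrin 64} uniformly applicable, so on every compact $K\Subset\mathbb{R}^n\setminus\{0\}$ we get $\sup_K v_k\le C(K)\inf_K v_k\le C(K)$. Interior $C^{1,\alpha}$ estimates then permit extraction of a subsequence $v_k\to v_\infty$ locally uniformly on $\mathbb{R}^n\setminus\{0\}$, where $v_\infty$ is a positive $p$-harmonic function with $\inf_{S^{n-1}} v_\infty=1$. The Liouville-type Theorem \ref{Liouville type thm} forces $v_\infty$ to be radially symmetric, so $v_\infty\equiv 1$ on $S^{n-1}$. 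Passing to a subsequence with $\xi_k\to\xi_0$, we conclude $u(y_k)/\underline u(|y_k|)=v_k(\xi_k)\to v_\infty(\xi_0)=1$; arbitrariness of $y_k$ establishes \eqref{limit 1}.

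\textbf{Unbounded case: $u\to\infty$.} The symmetry implies $\bar u(r)/\underline u(r)\to 1$. By unboundedness there exist $z_k\to 0$ with $u(z_k)\to\infty$, hence $\bar u(|z_k|)\to\infty$, hence $\underline u(|z_k|)\to\infty$. If on the contrary some sequence $r_k'\to 0$ had $\underline u(r_k')$ bounded, symmetry would bound $\bar u(r_k')$ as well; combining this with the Wolff-potential upper bound of Theorem \ref{KM theorem} on annular regions between the $r_k'$'s and the $|z_k|$'s forces $u$ to be bounded in a full punctured neighborhood of $0$, contradicting unboundedness. Hence $\underline u(r)\to\infty$, and together with the symmetry $u(x)\to\infty$ as $x\to 0$.

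\textbf{Main obstacle.} The delicate step is the uniform Harnack bound for the rescaled sequence $v_k$: the exponent $n(p-1)/(n-p)$ is precisely the critical threshold at which Serrin's structural hypotheses on the coefficient ratio degenerate, and only the vanishing $A_k\to 0$ (a consequence of $m=0$) restores uniform control of the constants. A secondary technical point is promoting the $p$-thin convergence $u/G_p\to 0$ (outside the exceptional set $E$) to the pointwise limit $r^{(n-p)/(p-1)}\underline u(r)\to 0$, which relies on Lemma \ref{close to m} together with the fact that $E$ cannot cover any ball $B(y,\alpha|y|)$ for small $|y|$.
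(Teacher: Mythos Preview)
Your approach is essentially the paper's: the bounded case via Serrin's removable singularity theorem plus $C^{1,\alpha}$ regularity, and the unbounded case via the rescaling $v_k(\xi)=u(r_k\xi)/\underline u(r_k)$, a uniform Harnack inequality (made possible because $m=0$ kills the coefficient), $C^{1,\alpha}$ compactness, and the Liouville Theorem~\ref{Liouville type thm}. Two remarks are in order.

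First, a simplification: you do not need Lemma~\ref{close to m} or any $p$-thinness argument to get $r^{(n-p)/(p-1)}\underline u(r)\to 0$. Theorem~\ref{remove the p-thin set} (Section~\ref{Sec4}), which precedes the present statement, already gives the full pointwise limit $u(x)/|x|^{(p-n)/(p-1)}\to m=0$, so $A_k\to 0$ is immediate.

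Second, and more substantively, your final paragraph (``unbounded implies $u\to\infty$'') has a gap. You assert that the Wolff-potential upper bound of Theorem~\ref{KM theorem} ``on annular regions between the $r_k'$'s and the $|z_k|$'s forces $u$ to be bounded'', but Theorem~\ref{KM theorem} bounds $u(x_0)$ by $c_2(\inf_{B(x_0,\rho)}u+W_{1,p}^\mu(x_0,2\rho))$, and neither term is controlled here: for $B(x_0,\rho)$ to meet a sphere $\{|x|=r_k'\}$ one typically needs $3\rho>|x_0|$, so $B(x_0,3\rho)$ contains the origin and the estimate is unavailable; and you have no a priori bound on the Wolff potential without already knowing $u$ is bounded. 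The correct (and simpler) tool is the minimum principle for $p$-superharmonic functions, which is exactly what the paper uses in Lemma~\ref{Harnack inequality}: interleave radii so that $|z_{k_1}|<r'<|z_{k_2}|$ with $\underline u(|z_{k_1}|),\underline u(|z_{k_2}|)$ arbitrarily large; then on the annulus $\{|z_{k_1}|\le|x|\le|z_{k_2}|\}$ one has $u\ge\min\{\underline u(|z_{k_1}|),\underline u(|z_{k_2}|)\}$, contradicting $\bar u(r')\le M$. Note also that the paper reverses your order, establishing the dichotomy \emph{before} the symmetry (inside Lemma~\ref{Harnack inequality}); this is immaterial, since your rescaling argument requires only $m=0$, not $u\to\infty$.
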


We will need the following Harnack inequality. 

For $0<\epsilon<1$ and $r>0$, we define 
\[
\Omega_{r}^{\epsilon}=\{y\in\mathbb{R}^{n};\epsilon r\le|y|\le\epsilon^{-1}r\}.
\]

\begin{lem}\label{Harnack inequality}

Under the assumptions of Theorem \ref{main thm} and $m=0$, for any
$0<\epsilon<1$, there holds
\begin{itemize}
\item Either $u(x)\to\infty$ as $x\to0$ and there are positive constants
$\delta(\epsilon),\,\,C$ such that 
\[
\sup_{\Omega_{r}^{\epsilon}}u\le C\inf_{\Omega_{r}^{\epsilon}}u
\]
 for $0<r<\delta(\epsilon)$, 
\item Or $u(0)\in(0,\infty)$ and $u$ can be extended to $C^{1,\alpha}(B(0,1))$. 
\end{itemize}
\end{lem}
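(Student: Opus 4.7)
The approach is to reduce an annular Harnack inequality for $u$ near $0$ to Serrin's local Harnack inequality from \cite{Serrin 64} by rescaling, and then extract the dichotomy from a standard removable-singularity/blow-up alternative.

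For $r>0$ small, set $v_r(y) = r^{(n-p)/(p-1)} u(ry)$ for $y \in \Omega_1^\epsilon$. A direct rescaling computation shows $v_r$ satisfies the same inequality $0 \le -\Delta_p v_r \le \tau v_r^{n(p-1)/(n-p)}$ on $\Omega_1^\epsilon$. Since $m=0$, Theorem \ref{remove the p-thin set} gives $|x|^{(n-p)/(p-1)} u(x) \to 0$ as $x\to 0$; because $|x|$ and $r$ are comparable (up to constants depending on $\epsilon$) on $\Omega_r^\epsilon$, we conclude
\[
\sup_{\Omega_1^\epsilon} v_r \longrightarrow 0 \quad \text{as}\quad r\to 0.
\]

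Next, regard $v_r$ as a solution of the divergence-form equation (\ref{equation of divergence form}) with $\mathcal{A}(x,v,q) = |q|^{p-2}q$ and $\mathcal{B}(x,v,q) = -\Delta_p v_r \in [0, \tau v_r^{n(p-1)/(n-p)}]$. Writing the nonlinear term as $v_r^{n(p-1)/(n-p)} = v_r^{p-1}\cdot v_r^{p(p-1)/(n-p)}$, the Serrin coefficient $d_r := \tau v_r^{p(p-1)/(n-p)}$ is uniformly small in $L^\infty(\Omega_1^\epsilon)$, so the structural conditions (\ref{structure condition 1})--(\ref{structure condition 2}) hold with uniform bounds for all sufficiently small $r$. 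Serrin's local Harnack inequality therefore applies on any ball compactly contained in $\Omega_1^\epsilon$, with a Harnack constant independent of $r$. Covering the compact, connected annulus $\Omega_1^\epsilon$ by a finite chain of such balls (the number depending only on $\epsilon$) and iterating yields $\sup_{\Omega_1^\epsilon} v_r \le C(\epsilon) \inf_{\Omega_1^\epsilon} v_r$. Rescaling back produces
\[
\sup_{\Omega_r^\epsilon} u \;\le\; C(\epsilon) \inf_{\Omega_r^\epsilon} u \qquad \text{for}\quad 0<r<\delta(\epsilon),
\]
where $\delta(\epsilon)$ is chosen so that $\sup v_r$ is uniformly small on that range.

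The dichotomy is then immediate. If $\liminf_{x\to 0} u(x) = \infty$, then $\inf_{\Omega_r^\epsilon} u \to \infty$ and the annular Harnack forces $\sup_{\Omega_r^\epsilon} u \to \infty$, giving $u(x)\to\infty$ and the first alternative. Otherwise $\liminf_{x\to 0} u(x) < \infty$; by the Harnack just established, $u$ is uniformly bounded in a punctured neighborhood of $0$. In that case the $G_p$-type blow-up is excluded, so Theorem \ref{Serrin's removable singularity} applies (with $d,f \in L^\infty$ coming from boundedness of $u$) and $u$ extends across $0$ as a solution of the same differential inequality with $-\Delta_p u \in L^\infty$ locally. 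The DiBenedetto/Tolksdorf regularity theory for the $p$-Laplacian then yields $u\in C^{1,\alpha}(B(0,1))$ for some $\alpha>0$, which is the second alternative.

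The main obstacle I foresee is the chaining step: ensuring that both the structural bounds and the local Harnack constants remain uniform in $r$ as one rescales and iterates across the shell. This depends crucially on $\sup v_r \to 0$, which is precisely where the hypothesis $m=0$ enters; without it, $d_r$ could blow up and the chained Harnack constant would degenerate as $r\to 0$. A secondary subtlety is that we have only a one-sided inequality $0\le -\Delta_p u\le \tau u^{n(p-1)/(n-p)}$ rather than an equation, but this fits the Serrin framework provided the associated coefficients (controlled by $\sup v_r$, hence by boundedness of $u$ after rescaling) lie in the correct Lebesgue classes, which is routine to verify.
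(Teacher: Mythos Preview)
Your approach is essentially the paper's: rescale to a fixed annulus, use $m=0$ to make the zeroth-order coefficient uniformly small, apply Serrin's local Harnack inequality and chain over a finite cover, then run the dichotomy via Serrin's removable-singularity theorem and $C^{1,\alpha}$ regularity. Your normalization $v_r(y)=r^{(n-p)/(p-1)}u(ry)$ differs from the paper's $v_\delta(\xi)=u(\delta\xi)$ only by a scalar, which is irrelevant for the Harnack ratio; your observation that the structural constants $e,f,g$ vanish (so Serrin's Harnack comes without an additive term $k$) is in fact slightly cleaner than the paper's version, which carries the additive $k$ and absorbs it later using $u\to\infty$.

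There is one genuine gap in your dichotomy step. You assert: ``Otherwise $\liminf_{x\to 0} u(x)<\infty$; by the Harnack just established, $u$ is uniformly bounded in a punctured neighborhood of $0$.'' The annular Harnack alone does not yield this: it only controls the oscillation on each fixed shell $\Omega_r^\epsilon$, but does not by itself rule out $\underline{u}(r)$ oscillating between bounded and arbitrarily large values as $r\to 0$ across shells separated by ratios larger than $1/\epsilon^2$. The missing ingredient is the minimum principle for $p$-superharmonic functions (i.e., $-\Delta_p u\ge 0$): if both a bounded sequence and a sequence along which $u\to\infty$ existed, one could sandwich a sphere with small $\underline{u}$ strictly between two spheres with arbitrarily large $\underline{u}$, and comparison with the constant on the intermediate annulus gives a contradiction. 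The paper carries out exactly this argument explicitly. Once you insert it, your proof is complete and coincides with the paper's; you should also note, as the paper does, that $u(0)>0$ follows from $p$-superharmonicity.
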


We will demonstrate how our result is derived from Serrin's Harnack inequality in \cite{Serrin 64}.

\begin{proof}(of Lemma \ref{Harnack inequality}) First, we may
cover
\[
\Omega_{1}^{\epsilon}=\{y\in\mathbb{R}^{n};\epsilon\le|y|\le\epsilon^{-1}\}
\]
with finitely many balls $B_{i},i=1,\cdots, l$ such that the concentric
balls $3B_{i}\subset\Omega_{1}^{\frac{\epsilon}{2}}$. The number
$l$ depends only on $\epsilon$. 

For $u(y),y\in\Omega_{\delta}^{\frac{\epsilon}{2}}$, we let
\[
v_{\delta}(\xi)=u(\delta\xi),\,\,\forall\xi\in\Omega_{1}^{\frac{\epsilon}{2}}.
\]
Then
\begin{align*}
0\le\frac{-\Delta_{p}^{\xi}v_{\delta}(\xi)}{v_{\delta}(\xi)^{p-1}} & \le\frac{-\delta^{p}\Delta_{p}^{y}u(y)}{u^{p-1}}\le \tau\delta^{p}u^{\frac{p(p-1)}{n-p}}(y)\le\frac{\tau}{|\xi|^{p}}(|y|^{\frac{n-p}{p-1}}u(y))^{\frac{p(p-1)}{n-p}}.
\end{align*}
Since $m=0$, we have
\[
\sup_{y\in\Omega_{\delta}^{\frac{\epsilon}{2}}}|y|^{\frac{n-p}{p-1}}u(y)\to0\,\,{\rm as}\,\,\delta\to0
\]
and therefore 
\[
\sup_{\xi\in\Omega_{1}^{\frac{\epsilon}{2}}}|\frac{-\Delta_{p}^{\xi}v_{\delta}(\xi)}{v_{\delta}(\xi)^{p-1}}|\to0\,\,{\rm as}\,\,\delta\to0.
\]
Then we may apply Serrin's Harnack inequality to $v$ in $B_{i}$
and conclude that 
\[
\sup_{B_{i}}v_{\delta}\le C(\min_{B_{i}}v_{\delta}+k)
\]
for some $k$ depending only on $\sup_{\xi\in\Omega_{1}^{\frac{\epsilon}{2}}}|\frac{-\Delta_{p}^{\xi}v_{\delta}(\xi)}{v_{\delta}(\xi)^{p-1}}|$.
It is now easy to prove
\[
\sup_{\Omega_{1}^{\epsilon}}v_{\delta}\le C(\min_{\Omega_{1}^{\epsilon}}v_{\delta}+k)
\]
and hence 
\begin{equation}
\sup_{\Omega_{\delta}^{\epsilon}}u\le C(\min_{\Omega_{\delta}^{\epsilon}}u+k).\label{harnarck for u}
\end{equation}

If $u(y)\to\infty$ as $y\to0$, one may remove $k$ in the above
inequality. We will prove that, if $u(y)\to\infty$ as $y\to0$ does not
hold, then
\[
\lim_{y\to0}u(y)=u_{0}>0.
\]
In this case, we claim that there can not be a sequence $y_{i}\to0$
such that $u(y_{i})\to\infty$. If there were, notice that we may
find another sequence $\tilde{y}_{i}\to0$ such that $u(\tilde{y}_{i})\le M$.
Then we may find $|y_{i}|<|\tilde{y}_{j}|<|y_{k}|$ such that $u(\tilde{y}_{j})\ll\min\{u(y_{i}),u(y_{k})\}.$
Then from (\ref{harnarck for u}), we have
\[
\sup_{\Omega_{|\tilde{y}_{j}|}^{\epsilon}}u\ll\min\{\inf_{\Omega_{|y_{i}|}^{\epsilon}}u,\inf_{\Omega_{|y_{k}|}^{\epsilon}}u\},
\]
which contradicts the fact that $u$ is $p$-superharmonic. Then
we know that if $u(y)\to\infty$ as $y\to0$ does not hold, then
$u\le M$ near $0$. According to Theorem\ref{Serrin's removable singularity}, we know
$u$ has removable singularity at $0$, which means that  ($\ref{main differential inequality}$)
holds in $B(0,1)$. The reason why we can apply Theorem\ref{Serrin's removable singularity}
is that 
\[
-\Delta_{p}u\le \tau u^{\frac{n(p-1)}{n-p}}\le \tau u^{\frac{p(p-1)}{n-p}}u^{p-1}
\]
and $u^{\frac{p(p-1)}{n-p}}$ is bounded and therefore belongs to $L^{\frac{n}{p-\kappa}}$
for some $\kappa>0$. Then from \cite[Theorem 8]{Serrin 64}, we know
$u$ can be extended to $0$ in $C^{1,\alpha}$ manner for some $\alpha>0$.
It is evident that $u(0)>0$, since $u$ is $p$-superharmonic. 

\end{proof}

Another result we need is the following Liouville-type theorem on
$\mathbb{R}^{n}\backslash\{0\}$. 

\begin{thm}\label{Liouville type thm}

For $1<p<n$, a $p$-harmonic function $u\ge0$ on $\mathbb{R}^{n}\backslash\{0\}$
has the form 
\[
a|x|^{\frac{p-n}{p-1}}+b.
\]

\end{thm}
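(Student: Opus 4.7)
My plan is to identify the asymptotic constants $a,b$ of $u$ at $0$ and at infinity, then compare $u$ to $U(x):=a|x|^{(p-n)/(p-1)}+b$ on annuli. The key observation is that since $U+c$ is $p$-harmonic for every constant $c$ (translation invariance of $\Delta_p$), the standard $p$-harmonic comparison principle applied to $u$ and $U+c$ gives $\sup_A(u-U)=\sup_{\partial A}(u-U)$ and $\inf_A(u-U)=\inf_{\partial A}(u-U)$ on any annulus $A=A(r_1,r_2)\subset\mathbb{R}^n\setminus\{0\}$. Hence once the asymptotics match so that $u-U\to 0$ uniformly on $\partial A$ as $r_1\to 0,r_2\to\infty$, the conclusion $u\equiv U$ follows immediately.

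Serrin's Theorem \ref{Serrin's removable singularity} dichotomizes the singularity at $0$. If removable, $u$ extends to a nonnegative $p$-harmonic function on $\mathbb{R}^n$; translation invariance (subtract $\inf u$, still $p$-harmonic) combined with a Harnack-chain argument forces any such function with infimum $0$ to vanish, so $u$ is constant, giving the form $0\cdot|x|^{(p-n)/(p-1)}+b$. Otherwise Serrin gives the bound $c_1|x|^{(p-n)/(p-1)}\le u\le c_2|x|^{(p-n)/(p-1)}$ near $0$, and extending $u$ by its lim-inf makes it $p$-superharmonic on $\mathbb{R}^n$ with associated measure $m_0\delta_0$, $m_0>0$. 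Theorem \ref{Theorem 4 of LMQZ2023b} then provides $a>0$ with $\liminf_{x\to 0}u(x)|x|^{(n-p)/(p-1)}=a$, while the strong minimum principle (applied to the nonconstant $p$-harmonic $u$, which blows up at $0$) forbids interior minima, so $b:=\inf u\ge 0$ can only be approached as $|x|\to\infty$ and $u\ge b$ globally.

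To upgrade these statements to uniform asymptotics I use rescaling. The blowdown $u_R(y):=u(Ry)$ for $R\to\infty$ gives a family of $p$-harmonic functions on $\mathbb{R}^n\setminus\{0\}$, uniformly bounded on compacts by Harnack, hence precompact in $C^0_{\mathrm{loc}}$. Choosing $R_k\to\infty$ with $\underline{u}(R_k)\to b$ (possible since $\liminf_{|x|\to\infty}u=b$), any subsequential limit $u_\infty$ satisfies $u_\infty\ge b$ and attains $b$ at some point on the unit sphere, so by the strong minimum principle $u_\infty\equiv b$; the uniqueness of the limit yields $u\to b$ uniformly on spheres as $|x|\to\infty$. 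Similarly, the blowup $u^R(y):=R^{(n-p)/(p-1)}u(Ry)$ for $R\to 0$ is pinched between $c_1|y|^{(p-n)/(p-1)}$ and $c_2|y|^{(p-n)/(p-1)}$ by the Serrin bounds, and the scaling identity $u^R(y)|y|^{(n-p)/(p-1)}=u(Ry)|Ry|^{(n-p)/(p-1)}$ combined with Theorem \ref{Theorem 4 of LMQZ2023b} forces every subsequential limit to be the radial profile $a|y|^{(p-n)/(p-1)}$, giving $u(x)|x|^{(n-p)/(p-1)}\to a$ uniformly on spheres as $|x|\to 0$.

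Finally, setting $U:=a|x|^{(p-n)/(p-1)}+b$ and applying the principle from the first paragraph on $A(r_1,r_2)$: the uniform asymptotics above give $\sup_{|x|=r_i}(u-U),\inf_{|x|=r_i}(u-U)\to 0$ as $r_1\to 0,r_2\to\infty$, and so $u\equiv U$. The main obstacle is the uniqueness of the blowup and blowdown limits as the expected radial profiles: this rests on the $C^0_{\mathrm{loc}}$-precompactness of the rescaled $p$-harmonic families, the strong minimum principle, and the precise asymptotic constants from Theorem \ref{Theorem 4 of LMQZ2023b}.
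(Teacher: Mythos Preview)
Your barrier-and-comparison strategy is genuinely different from the paper's proof, which simply assembles a global bound $0\le u(x)\le C|x|^{(p-n)/(p-1)}+C'$ from Serrin's singularity theorem together with the exterior-domain decay results of \cite{Serrin65-2,Avila and Brock}, and then quotes \cite[Theorem~2.2]{Kich-Veron} to conclude. Your approach is more self-contained and gives a clearer picture of \emph{why} the radial profiles are the only possibilities, at the cost of having to manufacture the asymptotic constants $a,b$ by hand.

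There is, however, a real gap in the closing step. From the asymptotic $u(x)\,|x|^{(n-p)/(p-1)}\to a$ you only get
\[
u(x)-a|x|^{(p-n)/(p-1)}=o\!\bigl(|x|^{(p-n)/(p-1)}\bigr),
\]
which says nothing about $\sup_{|x|=r_1}(u-U)$ or $\inf_{|x|=r_1}(u-U)$ as $r_1\to 0$; these quantities could perfectly well diverge. So the final application of your comparison identity $\sup_A(u-U)=\sup_{\partial A}(u-U)$ does not close. The fix is standard: for each $\varepsilon>0$ use instead the $p$-harmonic barriers
\[
U_\varepsilon^{\pm}(x):=(a\pm\varepsilon)|x|^{(p-n)/(p-1)}+(b\pm\varepsilon).
\]
The asymptotics you established give $U_\varepsilon^{-}\le u\le U_\varepsilon^{+}$ on $\partial A(r_1,r_2)$ once $r_1$ is small and $r_2$ is large, hence on all of $A(r_1,r_2)$ by comparison; then let $r_1\to0$, $r_2\to\infty$, and finally $\varepsilon\to0$.

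Two smaller points. First, in the blowdown you only obtain compactness along the special sequence $R_k$ with $\underline u(R_k)\to b$; ``uniqueness of the limit'' does not by itself upgrade this to full convergence, since for arbitrary $R\to\infty$ you have no a~priori bound on $u_R$. One extra comparison on the annuli $A(R_k,R_{k+1})$ with the radial barriers matching $\bar u(R_k),\bar u(R_{k+1})$ gives the missing uniform bound and hence $u\to b$. Second, your blowup identification of the limit as $a|y|^{(p-n)/(p-1)}$ via Theorem~\ref{Theorem 4 of LMQZ2023b} is delicate because that theorem only gives convergence off a $p$-thin set; it is cleaner to invoke Theorem~\ref{remove the p-thin set} directly (a $p$-harmonic function trivially satisfies \eqref{main differential inequality}), which yields the full limit $u(x)|x|^{(n-p)/(p-1)}\to a$ needed for the $\varepsilon$-barrier argument above.
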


\begin{rmk}
 This theorem can be regarded as a slightly more general form of \cite[Theorem2.1]{Sciunzi2016}. By using the moving plane method, Sciunzi 
 proved that 
 any  $p$-harmonic function $u$ has the form 
 $a|x|^{\frac{p-n}{p-1}}$for some $a>0$,
 under the assumption of
 \[
 \lim\limits_{x\to0}u(x)=\infty\quad and\quad \lim\limits_{x\to\infty}u(x)=0.
 \]
\end{rmk}

\begin{proof}(of Theorem \ref{Liouville type thm}) From \cite[Corollary, p.84]{Serrin65-2} or \cite[Theorem 2 and Theorem 3]{Avila and Brock},
\[
\lim_{x\to\infty}u(x)=b
\]
exists and 
\[
|u(x)-b|\le C_{1}|x|^{\frac{p-n}{p-1}},x\in\mathbb{R}^{n}\backslash B(0,1).
\]
 From Theorem \ref{Serrin's removable singularity}, eihter
\[
C_{2}^{-1}|x|^{\frac{p-n}{p-1}}\le u(x)\le C_{2}|x|^{\frac{p-n}{p-1}},x\in B(0,1)\backslash\{0\},
\]
or $u$ is bounded near $0$.
Thus, 
\[
0\le u(x)\le C|x|^{\frac{p-n}{p+1}}+C',x\in \mathbb{R}^n\backslash\{0\}.
\]
 Then from \cite[Theorem 2.2]{Kich-Veron}, we finish the proof.

\end{proof}

\begin{proof}(of Theorem \ref{asymptotic symmetry when m=00003D0})

From Lemma \ref{Harnack inequality}, if $u(x)\to\infty$ does not
hold as $x\to0$, $u(x)$ can be extended to $C^{1,\alpha}(B(0,1))$.
Since $u(0)>0$, clearly (\ref{limit 1}) holds.

Now we assume $u(x)\to\infty$ as $x\to0$. Denote $\underline{u}(r)=\inf_{|x|=r}u$.
Define 
\[
u_{r}(\xi)=\frac{u(r\xi)}{\underline{u}(r)},\,\xi\in\Omega_{1}^{\epsilon}.
\]
Direct calculation yields
\[
0\le-\Delta_{p}^{\xi}u_{r}(\xi)=f_{r}(\xi)\le\frac{\tau r^{p}}{\underline{u}(r)^{p-1}}u(r\xi)^{\frac{n(p-1)}{n-p}}.
\]
By the definition of $\underline{u}(r)$ and Lemma \ref{Harnack inequality},
there exists a positive constant $C(\epsilon)$ such that 
\[
C(\epsilon)^{-1}u(r\xi)\le\underline{u}(r)\le C(\epsilon)u(r\xi),\forall\xi\in\Omega_{1}^{\epsilon},
\]
from which we have 
\[
C(\epsilon)^{-1}\le u_{r}(\xi)\le C(\epsilon)
\]
 and 
\begin{align*}
0 & \le f_{r}(\xi)\le C(\epsilon)^{p-1}r^{p}u(r\xi)^{\frac{p(p-1)}{n-p}}=C(\epsilon)^{p-1}r^{p}o(|r\xi|^{-p})\\
 & =C(\epsilon)^{p-1}o(|\xi|^{-p})=C(\epsilon)^{p-1}o(1).
\end{align*}
As we have the $C^{1,\alpha}$ estimate 
\[
\|u_{r}(\xi)\|_{C^{1,\alpha}(\Omega_{1}^{\epsilon})}\le C(\epsilon)
\] from \cite{Serrin 64,Di Benedetto,Tolksdorf}, we deduce that $u_{r}(\xi)$ will converges to some function $\hat{u}(\xi)$
in $C^{1}(\Omega_{1}^{\epsilon})$. Therefore, for any $\phi\in C_{0}^{\infty}(\Omega_{1}^{\epsilon})$,
\begin{align*}
\int_{\Omega_{1}^{\epsilon}}|\nabla_{\xi}\hat{u}(\xi)|^{p-2}\nabla_{\xi}\hat{u}(\xi)\nabla\phi(\xi)d\xi & =\lim_{r\to0}\int_{\Omega_{1}^{\epsilon}}|\nabla_{\xi}u_{r}(\xi)|^{p-2}\nabla_{\xi}u_{r}(\xi)\nabla\phi(\xi)d\xi\\
 & \le\lim_{r\to0}\int_{\Omega_{1}^{\epsilon}}C(\epsilon)^{p-1}o(1)d\xi\\
 & =0,
\end{align*}
which implies that $C(\epsilon)^{-1}\le\hat{u}(\xi)\le C(\epsilon)$ is a
$p$-harmonic function. By letting $\epsilon\to0$ we obtain a nonnegative
$p$-harmonic function $\hat{u}(\xi)$ on $\mathbb{R}^{n}\backslash\{0\}$.
Using Theorem \ref{Liouville type thm}, it follows that
\[
\hat{u}(\xi)=a|\xi|^{\frac{p-n}{p-1}}+b.
\]
Thus, 
\[
\lim_{x\to0}\frac{u(x)}{g(|x|)}=\lim_{x\to0}\frac{u(|x|\frac{x}{|x|})}{g(|x|)}=a+b.
\]
 From the definition of $g$, we know $a+b=1.$ Then the theorem is
proved.

\end{proof}

\section{Precise asymptotic lower bound of $u(x)$, when  $m=0$ and $u(x)\to\infty,$ as $x\to0$}\label{Sec6}

We assume $m=0$ in this section and moreover assume $u(x)\to\infty$
as $x\to0$. We will prove the following theorem.

\begin{thm}\label{lower log bound thm}

Suppose $u$ is a $C^{2}$ nonnegative solution of (\ref{main differential inequality})
in $B(0,1)\backslash\{0\}$. Assume 
\[
\lim_{x\to0}\frac{u(x)}{|x|^{\frac{p-n}{p-1}}}=0
\]
 and $u(x)\to\infty$ as $x\to0$, then 
\[
\liminf_{r\to0^+}(\log\frac{1}{r})^{\frac{n-p}{p(p-1)}}r^{\frac{n-p}{p-1}}u(r)\ge (\frac{1}{\tau}(\frac{n-p}{p})(\frac{n-p}{p-1})^{p-1})^\frac{n-p}{p(p-1)}.
\]

\end{thm}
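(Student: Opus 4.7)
The plan is to convert the PDE inequality into an integral ODI for $\underline u$, then extract the sharp constant by matching leading asymptotics with those of an explicit model solution. Write $\ell=\tfrac{n-p}{p-1}$, $\alpha=\tfrac{n-p}{p(p-1)}$, $q=\tfrac{n(p-1)}{n-p}$, and let $A^{*}=\bigl(\tfrac{1}{\tau}\tfrac{n-p}{p}\bigl(\tfrac{n-p}{p-1}\bigr)^{p-1}\bigr)^{\alpha}$ denote the claimed constant. A direct substitution verifies that the radial model $w(r)=A^{*}r^{-\ell}(\log\tfrac{1}{r})^{-\alpha}$ satisfies $-\Delta_{p}w=\tau w^{q}$ to leading order as $r\to 0$, so $A^{*}$ is precisely the critical coefficient to be recovered.

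First I would derive an integral ODI. Fix $0<\rho<r$ small and integrate $-\Delta_{p}u\le\tau u^{q}$ on $B_r\setminus B_\rho$; the divergence theorem yields
\[
-\int_{\partial B_r}|\nabla u|^{p-2}u_r\,dS+\int_{\partial B_\rho}|\nabla u|^{p-2}u_r\,dS\le\tau\int_{B_r\setminus B_\rho}u^{q}\,dx.
\]
Because $m=0$, Theorem~\ref{Theorem 4 of LMQZ2023b} says the measure $-\Delta_p u$ has no atom at the origin, and the $C^{1,\alpha}$-convergence of the rescaled profiles $u_{r}(\xi)=u(r\xi)/\underline u(r)$ established in the proof of Theorem~\ref{asymptotic symmetry when m=00003D0} forces $|\nabla u|^{p-1}=o(|x|^{-(n-1)})$, so the inner boundary term vanishes as $\rho\to 0$. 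Combining the same $C^{1,\alpha}$ convergence with the asymptotic symmetry $u(x)/\underline u(|x|)\to 1$ makes $\nabla u$ essentially radial on small spheres, producing the key ODI
\[
r^{n-1}(-\underline u'(r))^{p-1}\le\tau(1+o(1))\int_0^{r}s^{n-1}\underline u(s)^{q}\,ds\qquad(r\to 0^{+}).
\]

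Next I would extract the constant. Set $\Phi(r)=r^{\ell}(\log\tfrac{1}{r})^{\alpha}\underline u(r)$ and argue by contradiction: assume $A_{0}:=\liminf_{r\to 0}\Phi(r)<A^{*}$ and pick $r_{k}\to 0$ with $\Phi(r_{k})\to A_{0}$. The Wolff-potential lower bound of Theorem~\ref{KM theorem}, together with the asymptotic symmetry and the ODI above, shows that $\underline u$ locally matches the model shape at scale $r_{k}$, giving $\underline u(s)\approx A_{0}s^{-\ell}(\log\tfrac{1}{s})^{-\alpha}$ and $-\underline u'(s)\approx A_{0}\ell s^{-\ell-1}(\log\tfrac{1}{s})^{-\alpha}$ for $s$ near $r_{k}$. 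Setting $L_{k}=\log(1/r_{k})$ and using the identities $\ell q=n$ and $\alpha(p-1)=\alpha q-1=(n-p)/p$, one obtains
\[
\int_0^{r_{k}}s^{n-1}\underline u(s)^{q}\,ds\sim\frac{p}{n-p}A_{0}^{q}L_{k}^{-(n-p)/p},\qquad r_{k}^{n-1}(-\underline u'(r_{k}))^{p-1}\sim(A_{0}\ell)^{p-1}L_{k}^{-(n-p)/p}.
\]
Substituting into the ODI and cancelling the common factor $L_{k}^{-(n-p)/p}$ produces $(A_{0}\ell)^{p-1}\le\tau A_{0}^{q}\cdot\tfrac{p}{n-p}$, which simplifies via $q-(p-1)=1/\alpha$ to $A_{0}\ge A^{*}$, contradicting $A_{0}<A^{*}$.

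The main obstacle is justifying the two $o(1)$ approximations used to pass from the boundary integrals to pointwise expressions in $\underline u$ and $\underline u'$. As noted in the introduction, $\underline u$ and $\bar u$ are only semicontinuous and need not be classically differentiable, so one has to interpret $\underline u'$ as a one-sided Dini derivative, verify absolute continuity of $\underline u$ on small enough intervals, and quantify the $o(1)$ errors using the rate of $C^{1,\alpha}$-convergence of the rescaled profiles. A secondary difficulty is the replacement of $\underline u(s)$ by the model profile $A_{0}s^{-\ell}(\log(1/s))^{-\alpha}$ on the relevant neighbourhood of $r_{k}$: to make this rigorous I would run a bootstrap from the ODI (first showing $\underline u$ is comparable to the model up to constants, then iterating to pin down the sharp constant), or equivalently a Gronwall-type argument on a dyadic window around $r_{k}$ to control the deviation of $\Phi$.
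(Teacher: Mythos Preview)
Your contradiction step has a genuine gap at the point where you evaluate the right-hand side $\int_0^{r_k}s^{n-1}\underline u(s)^{q}\,ds$. To deduce $(A_0\ell)^{p-1}\le \tau A_0^{q}\cdot\tfrac{p}{n-p}$ from the integrated ODI you need an \emph{upper} bound on this integral of size $\tfrac{p}{n-p}A_0^{q}L_k^{-(n-p)/p}$. But the hypothesis $\liminf_{r\to 0}\Phi(r)=A_0$ only gives a \emph{lower} bound on $\underline u(s)$ for small $s$, hence only a lower bound on the integral. The theorem you are proving is itself merely a liminf statement, so nothing forbids $\limsup\Phi=+\infty$: from $m=0$ one knows $\underline u(s)s^{\ell}\to 0$, yet $\Phi(s)=\underline u(s)s^{\ell}(\log\tfrac1s)^{\alpha}$ can still be unbounded along subsequences. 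In that case the integral dominates the model value, the inequality $F(r_k)\le G(r_k)$ is satisfied trivially, and no constraint on $A_0$ emerges. Your proposed bootstrap or dyadic Gronwall fix would have to manufacture an upper bound on $\Phi$ from an inequality that only bounds $(-\underline u')^{p-1}$ from above by $\int\underline u^{q}$; no mechanism for this is indicated, and I do not see one.

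The paper takes a quite different route that avoids this obstruction. It works with $\bar u=\sup_{\partial B_r}u$ rather than $\underline u$ and uses a \emph{pointwise} touching argument: since $\bar u(|x|)\ge u(x)$ with equality at some $x_0$ on each sphere, wherever $\bar u''$ exists one gets $-\Delta_p\bar u\big|_{r}\le u^{q}(x_0)=\bar u(r)^{q}$. After the Emden--Fowler change $t=\eta\log(\eta/r)$, $\bar w(t)=e^{-t}\bar u(r)$, $\bar z=\bar w_t^{-}$, this becomes the autonomous differential inequality $\bar z_t\ge -\bar z-\bar w^{q}\big/\bigl((p-1)(\bar w+\bar z)^{p-2}\bigr)$. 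The sharp constant then falls out of a phase-plane trapping lemma: one shows that once $(\bar w,\bar z)$ enters the region $\{\bar z>-k\,\bar w^{\,q-p+2}\}$ with $k$ close to $\tfrac{1}{p-1}$ it can never leave, and integrating $\bar w_t>-k\bar w^{\,q-p+2}$ from some $t_0$ to $\infty$ yields the lower bound directly. This argument is local in $t$ and never needs to control an integral of $u^{q}$ over the whole interval $(0,r)$, which is exactly what your approach cannot do.
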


\begin{Def}
\begin{itemize}
\item A function $f(r),r\in(\alpha,\beta)$ is called convex if for any
$r_{1},r_{2}\in(\alpha,\beta)$, there holds
\[
f(\lambda r_{1}+(1-\lambda)r_{2})\le\lambda f(r_{1})+(1-\lambda)f(r_{2}),\forall\lambda\in(0,1).
\]
\item A function $f(r),r\in(\alpha,\beta)$ is called semi-convex if there
exists $C>0$ such that 
\[
f(r)+Cr^{2}
\]
 is convex. 
\item A function $f(r),r\in(\alpha,\beta)$ is called locally semi-convex
if for any $[\alpha_{1},\beta_{1}]\subset(\alpha,\beta)$, $f$ is
semi-convex in $(\alpha_{1},\beta_{1})$. 
\item If $-f$ is convex, semi-convex or locally semi-convex, we say $f$
is concave, semi-concave or locally semi-concave.
\end{itemize}
\end{Def}

For the properties of locally semi-convex functions, we have the next lemma. 

\begin{lem}

If $f(r)$, $r\in(\alpha,\beta)$ is locally semi-convex, then 
\begin{itemize}
\item $f$ is continuous.
\item $f_{r}^{-},f_{r}^{+}$ exists everywhere. And $f_{r}^{-}(r)\le f_{r}^{+}(r)$,
where ``$<$'' holds only on at most countably many point $\{r_{i}\}$,
and on $(\alpha,\beta)\backslash\{r_{i}\}$, $f_{r}^{\pm}$ is continuous. 
\item $f_{rr}(r)$ exists almost everywhere. There is a nonnegative Radon measure $\nu$ such that for any $\phi\in C_{0}^{\infty}((\alpha,\beta))$
\[
-\int_{\alpha}^{\beta}f\phi_{rr}dr=\int_{\alpha}^{\beta}\phi d\nu.
\]
Moreover, $\nu$ has the following decomposition into three Radon measures,
\[
\nu=\nu_{ac}+\nu_{pp}+\nu_{sc},.
\]
where $\nu_{ac}$ is absolutely continuous with respect to Lebesgue
measure, $\nu_{pp}$ is the pure point component with support $\{r_i\}$, $\nu_{sc}$ is singular
continuous component and $\nu_{pp},\nu_{sc}$ are nonnegative. For
short we write 
\[
f_{rr}(r)=\nu.
\]
\item The generalized Newton-Leibniz formula holds: for any $\alpha<\alpha_{1}<\beta_{1}<\beta$
\[
f_{r}^{-}(\beta_{1})-f_{r}^{+}(\alpha_{1})=\nu((\alpha_{1},\beta_{1})).
\]
\end{itemize}
\end{lem}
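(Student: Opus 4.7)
The plan is to reduce every claim to the classical one-dimensional theory of convex functions, by localizing and subtracting the smooth semi-convexity correction. I would fix an arbitrary compact subinterval $[\alpha_1,\beta_1]\subset(\alpha,\beta)$ and choose $C=C(\alpha_1,\beta_1)>0$ so that $g(r):=f(r)+Cr^2$ is convex on a neighborhood of $[\alpha_1,\beta_1]$; this is exactly the hypothesis of local semi-convexity. Every assertion about $f$ will then follow from the analogous classical assertion for $g$ via the smooth relations $f_r^\pm = g_r^\pm - 2Cr$ and, distributionally, $f_{rr}=g_{rr}-2C\,\mathcal{L}$, where $\mathcal{L}$ denotes Lebesgue measure.

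For the first two bullet points I would just quote the classical structure of convex functions on open intervals: $g$ is continuous, the one-sided derivatives $g_r^\pm$ exist everywhere and are nondecreasing, and $g_r^-(r)\le g_r^+(r)$ with strict inequality only on an at most countable jump set $\{r_i\}$ of the monotone function $g_r^+$. Off $\{r_i\}$ the two one-sided derivatives coincide and are continuous, because a nondecreasing real-valued function is continuous exactly where its one-sided limits agree. Subtracting $2Cr$ transfers these statements to $f_r^\pm$ verbatim, and Lebesgue's theorem that monotone functions are differentiable almost everywhere, applied to $g_r^+$, yields the a.e.\ existence of $f_{rr}$.

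For the third bullet I would use that the nondecreasing right-continuous function $g_r^+$ is the distribution function of a unique nonnegative Radon measure $\mu_g$ on $(\alpha_1,\beta_1)$, characterized by $\mu_g((a,b])=g_r^+(b)-g_r^+(a)$. Integrating by parts twice against $\phi\in C_0^\infty((\alpha_1,\beta_1))$ gives $-\int g\phi_{rr}\,dr=\int\phi\,d\mu_g$, whence $-\int f\phi_{rr}\,dr=\int\phi\,d\nu$ with $\nu:=\mu_g-2C\,\mathcal{L}$. This local definition of $\nu$ is independent of the admissible choice of $C$, since different choices produce measures differing only by a smooth Lebesgue density, so the local pieces glue by a standard partition-of-unity argument to a Radon measure on all of $(\alpha,\beta)$. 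Applying the Lebesgue decomposition to $\mu_g$ and noting that the smooth correction $-2C\,\mathcal{L}$ perturbs only the absolutely continuous part yields $\nu=\nu_{ac}+\nu_{pp}+\nu_{sc}$ with $\nu_{pp},\nu_{sc}\ge 0$; the atom of $\mu_g$ at $r_i$ equals the jump $g_r^+(r_i)-g_r^-(r_i)>0$, which identifies $\mathrm{supp}(\nu_{pp})=\{r_i\}$ as required.

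The generalized Newton-Leibniz identity in the fourth bullet is then immediate from $\mu_g((\alpha_1,\beta_1))=g_r^-(\beta_1)-g_r^+(\alpha_1)$, the standard formula for the measure of an open interval in terms of its nondecreasing right-continuous distribution function; substituting $g_r^\pm=f_r^\pm+2Cr$ on both sides and absorbing $2C(\beta_1-\alpha_1)=2C\,\mathcal{L}((\alpha_1,\beta_1))$ into $\nu_{ac}$ gives the stated formula for $\nu$. The only step that genuinely requires care is the consistent gluing of the locally defined $\nu$ in bullet 3, but because the canonical Lebesgue decomposition commutes with smooth perturbations this gluing is automatic once one checks that two admissible choices of $C$ on overlapping subintervals produce the same $\nu$. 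No step is a real obstacle; the entire lemma is a repackaging of standard facts from the one-dimensional theory of convex functions.
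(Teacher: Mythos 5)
Your argument is correct and is exactly the standard one-dimensional convex-function argument that the paper itself does not write out but delegates to the cited references (Rudin, Ziemer): subtract $Cr^{2}$ on compact subintervals, quote the classical facts on one-sided derivatives, the countable jump set, and the Lebesgue--Stieltjes measure generated by $g_{r}^{+}$, then transfer back to $f$ and glue. The one blemish is the sign in your integration-by-parts display --- for convex $g$ one has $\int g\,\phi_{rr}\,dr=+\int\phi\,d\mu_{g}$, so with the convention $f_{rr}=\nu$ the identity should carry no leading minus sign --- but this is inherited from the statement as printed rather than introduced by you.
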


The proof is a standard exercise in real analysis, see e.g. \cite[p.157]{Rudin1987} or \cite[p.333]{Ziemer2017}.

By rescaling, we can assume that $\tau=1$ in (\ref{main differential inequality}) without loss of generality. Now suppose a $C^{2}(B(0,1)\backslash\{0\})$ function $u\ge0$
satisfies 
\begin{equation}
0\le-\Delta_{p}u\le u^{\frac{n(p-1)}{n-p}},x\in B(0,1)\backslash\{0\}.\label{main inequality}
\end{equation}

Let $\eta=\frac{n-p}{p-1}$, $t=\eta\log\frac{\eta}{r}\in(\eta\log\eta,\infty)$
and 
\[e^{t}\bar{w}(t)=\bar{u}(r),\,\,e^{t}\underline{w}(t)=\underline{u}(r)\]
(recall that $\bar{u}(r),\underline{u}(r)$ are defined as $\underline{u}(r)=\inf\limits_{\partial B(0,r)}u(x),\,\,\bar{u}(r)=\sup\limits_{\partial B(0,r)}u(x)$).

\begin{lem}\label{properties of l.s.c. functions}
\begin{enumerate}
\item $\underline{w}(t)$ is locally semi-concave in $(\eta\log\eta,\infty)$
and 
\[
\lim_{t\to\infty}\underline{w}(t)=0.
\]
\item $\bar{w}(t)$ is locally semi-convex in $(\eta\log\eta,\infty)$
\[
\lim_{t\to\infty}\bar{w}(t)=0.
\]
We denote
\[
\bar{w}_{tt}(t)=\mu=\mu_{ac}+\mu_{pp}+\mu_{sc}
\]
 with $\mu_{pp},\mu_{sc}$ nonnegative. The support of $\mu_{sc}$
has $0$ Lebesgue measure and $\mu_{pp}$ is supported on $\{\tilde{t}_{i}\}$. 
\item For any $k>0$ 
\[
e^{kt}\underline{w}(t),\,\,e^{kt}\bar{w}(t)\to\infty,\,\,{\rm as}\,\,t\to\infty.
\]
\end{enumerate}
\end{lem}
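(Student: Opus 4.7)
My plan is to handle the three items using the substitution $r=\eta e^{-t/\eta}$, $\underline u(r)=e^{t}\underline w(t)$ (and analogously for $\bar w$), which linearizes the radial $p$-harmonic function $r^{-\eta}$. Throughout, $m=0$ gives $r^{\eta}u\to 0$ and hence $\underline w,\bar w\to 0$, while $u(x)\to\infty$ gives $\underline u,\bar u\to\infty$.

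For item (1), I would use that $u$ is $p$-superharmonic: on each sub-annulus $\{r_1<|x|<r_2\}$, the comparison principle applied to $u$ and the radial $p$-harmonic chord $Ar^{-\eta}+B$ matching $\underline u$ at the boundary gives $\underline u(r)\ge Ar^{-\eta}+B$, so $\underline u$ is concave in the variable $\rho=r^{-\eta}=\eta^{-\eta}e^{t}$. A direct chain-rule computation turns this into the distributional inequality $\underline w_{tt}+\underline w_t\le 0$; combined with a local lower bound on $\underline w_t$ (from $\underline u$ being non-decreasing in $t$ and $\underline w$ being bounded), this gives $\underline w_{tt}\le C$ locally, i.e.\ local semi-concavity. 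Item (2) is dual: the upper bound $-\Delta_p u\le u^{n(p-1)/(n-p)}$ yields a radial comparison for $\bar u$ from above by the $p$-harmonic chord plus a controlled error from the source term, which, after the same change of variables, translates into $\bar w_{tt}+\bar w_t\ge-\varepsilon(t)$ with $\varepsilon$ locally bounded, hence local semi-convexity. The decomposition $\mu=\mu_{ac}+\mu_{pp}+\mu_{sc}$ is the standard Lebesgue decomposition of the Stieltjes measure of a semi-convex function, and $\bar w\to 0$ follows from $\underline w\to 0$ together with the Harnack bound $\bar u\le C\underline u$ of Lemma \ref{Harnack inequality}.

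Item (3) is the most delicate. For $k\ge 1$ it is immediate, since $e^{kt}\underline w(t)=e^{(k-1)t}\underline u(r)\to\infty$. For $0<k<1$, setting $s=(1-k)\eta\in(0,\eta)$, the statement becomes $r^{s}\underline u(r)\to\infty$. I would argue by contradiction: if $r_n^{s}\underline u(r_n)\le M$ along some $r_n\to 0$, the rescalings $u_n(\xi)=r_n^{s}u(r_n\xi)$ are uniformly bounded on each $\Omega_1^{\epsilon}$ by Lemma \ref{Harnack inequality} and satisfy
\[
0\le -\Delta_p u_n\le r_n^{\,p(1-s/\eta)}\,u_n^{\,n(p-1)/(n-p)},
\]
whose right-hand side vanishes because $s<\eta$. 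By the $C^{1,\alpha}$ estimates of \cite{Serrin 64,Di Benedetto,Tolksdorf} and a diagonal extraction, $u_n\to u_*$ in $C^{1}_{\mathrm{loc}}(\mathbb{R}^n\setminus\{0\})$, with $u_*$ a bounded nonnegative $p$-harmonic function, hence by Theorem \ref{Liouville type thm} a constant $b\ge 0$. If $b>0$, one extracts from the convergence a neighborhood on which $u\sim br^{-s}$, and the explicit radial calculation shows that $-\Delta_p u$ is of order $r^{-p-s(p-1)}$, which strictly exceeds $u^{n(p-1)/(n-p)}\sim r^{-sn(p-1)/(n-p)}$ precisely when $s<\eta$, contradicting the upper bound. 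The degenerate case $b=0$ (i.e.\ $r_n^{s}\underline u(r_n)\to 0$) is closed by iterating the argument at the extremal exponent $s_\star=\limsup_n\log\underline u(r_n)/\log(1/r_n)\le s$: the upper bound $-\Delta_p u\le u^{n(p-1)/(n-p)}$ rules out sub-polynomial growth of $\underline u$ (such growth would force $-\Delta_p u$ to be of order $r^{-p}$, which grossly exceeds $u^{n(p-1)/(n-p)}$), so the iteration must terminate at a positive-limit rescaling, reducing to the previous contradiction. The statement for $\bar w$ then follows from Harnack, $\bar u\le C\underline u$.

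The main obstacle is precisely this $b=0$ sub-case of item (3): closing it requires the quantitative iteration at the extremal exponent, combined with the upper bound on $-\Delta_p u$ and the hypothesis $u(x)\to\infty$, to prevent the iteration from degenerating.
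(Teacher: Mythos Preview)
For Items~1 and~2 the paper takes a much shorter route than you do: it simply uses the standing hypothesis $u\in C^{2}(B(0,1)\setminus\{0\})$. On any compact sub-annulus $\{r_0\le|x|\le r_1\}$ one has $|\nabla^{2}u|\le M$, so $u+\tfrac{M}{2}|x|^{2}$ is convex there, whence $\bar u(r)+\tfrac{M}{2}r^{2}=\sup_{|x|=r}\bigl(u+\tfrac{M}{2}|x|^{2}\bigr)$ is convex in $r$ (a supremum of convex functions), and dually $\underline u$ is locally semi-concave; the smooth change $r\leftrightarrow t$ then transfers this to $\bar w,\underline w$. Your comparison argument for Item~1 is correct and in fact yields the sharper inequality $\underline w_{tt}+\underline w_t\le 0$, but your Item~2 is under-justified: to bound $\bar u$ from above by a radial barrier you must construct a radial supersolution of $-\Delta_p\psi=M$ matching $\bar u$ at the endpoints and then show that $\psi$ deviates from the $p$-harmonic chord by only a locally controlled error, which requires an explicit analysis of a nonlinear ODE that you have not carried out.

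Your Item~3 contains a genuine gap. In the case $b>0$, the $C^{1}$ convergence $u_{n}\to b$ holds only along the subsequence $r_{n}$, so you cannot conclude $u(x)\sim b|x|^{-s}$ for \emph{all} small $x$; and even granting that, the ``explicit radial calculation'' giving $-\Delta_p(b|x|^{-s})\asymp|x|^{-p-s(p-1)}$ applies to the profile $b|x|^{-s}$, not to $u$ itself. Since the only lower bound available is $-\Delta_p u\ge 0$, nothing forces $-\Delta_p u$ to be large, and no contradiction emerges. The $b=0$ ``iteration at the extremal exponent'' is too vague to carry weight. The paper's argument avoids rescaling entirely and is much shorter: if $e^{kt}\underline w(t)\not\to\infty$ for some $k\in(0,1)$, then via the Harnack bound of Lemma~\ref{Harnack inequality} one obtains $u(x)\le C|x|^{-s}$ with $s=(1-k)\eta<\eta$, so that in $-\Delta_p u\le u^{q-p+1}\cdot u^{p-1}$ the coefficient $u^{q-p+1}$ lies in $L^{n/(p-\varepsilon)}$ (because $s(q-p+1)<\eta(q-p+1)=p$); Serrin's structure conditions are thus met and his removable-singularity theorem forces $u$ to extend across $0$, contradicting $u(x)\to\infty$.
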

\begin{proof}
First we prove that $\underline{u}$ is locally semi-concave and $\bar{u}$
is locally semi-convex. For $[r_{0},r_{1}]\subset(0,1)$, we may assume
that 
\[
\sup_{|x|\in[r_{0},r_{1}]}|\nabla^{2}u|\le M(r_0,r_1)<\infty.
\]
It is apparent that  $u+\frac{M}{2}|x|^{2}$ is convex in $\{x;|x|\in[r_{0},r_{1}]\}.$
From this, we deduce that 
\[
\bar{u}(r)+\frac{M}{2}r^{2}=\sup_{|x|=r}(u+\frac{M}{2}|x|^{2})
\]
is convex in $r\in(r_{0},r_{1})$, which means $\bar{u}(r)$
is locally semi-convex. Similarly,  $\underline{u}$ is
locally semi-concave. Then it is easy to obtain that
$\bar{w}(\underline{w})$ is locally semi-convex (locally semi-concave).

Since $\underline{w}(t)=(\frac{r}{\eta})^{\eta}\underline{u}(r)$
and $(\frac{r}{\eta})^{\eta}\underline{u}(r)\to0$ as $r\to0$,
we see $\underline{w}(t)\to0$ as $t\to\infty.$ Since $\bar{u}(r)/\underline{u}(r)\to1,$
as $r\to0$, we know $\bar{w}(t)\to0$ as $t\to\infty.$

To prove Item 3, we assume if $\limsup_{t\to\infty}e^{kt}\underline{w}(t)$
were finite, there would be $c>0$ such that 
\[
\limsup_{x\to0}\frac{u(x)}{|x|^{(1-k)\frac{p-n}{p-1}}}=c.
\]
It follows that $u\in L^{(1+\theta)\frac{n(p-1)}{n-p}}$ for some
$\theta(k)>0$. Thus, $u$ has a removable singularity at $0$, which contradicts the fact that
as $x\to0$, $u\to\infty$(see\cite[Theorem10]{Serrin 64}). Then for any $k>0$, we have $e^{kt}\bar{w}(t),e^{kt}\underline{w}(t)\to\infty$
as $t\to\infty.$ 

\end{proof}

We denote $\bar{z}(t)=\bar{w}_{t}^{-}(t)$ and $q=\frac{n(p-1)}{n-p}$, and consider $(\bar{w}(t),\bar{z}(t))$
on $(\bar{w},\bar{z})$-plane. 

\begin{lem}\label{jump above}
\begin{enumerate}
\item $\bar{w}(t)$ is continuous for $t\ge\eta\log\eta$ and $\bar{z}(t)$
is continuous for $t\ge\eta\log\eta, t\ne\tilde{t}_{i}$.
For $t\in supp(\mu_{pp})=\{\tilde{t}_{i}\},$ $\bar{w}_{t}^{-}(\tilde{t}_{i})<\bar{w}_{t}^{+}(\tilde{t}_{i})$.
\item If $\bar{w}(t)+\bar{z}(t)>0$ for $t\in[t_{0},t_{1}]\subset(\eta\log\eta,\infty)$, then
\[
\bar{z}(t_{1})-\bar{z}(t_{0})\ge\bar{w}_{t}^{-}(t_{1})-\bar{w}_{t}^{+}(t_{0})\ge\int_{t_{0}}^{t_{1}}(-\bar{z}-\frac{\bar{w}^{q}}{(p-1)(\bar{w}+\bar{z})^{p-2}})dt.
\]
\end{enumerate}
\end{lem}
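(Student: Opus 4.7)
Item 1 is an immediate corollary of the structural lemma for locally semi-convex functions applied to $\bar{w}$ (which is locally semi-convex on $(\eta\log\eta,\infty)$ by Lemma \ref{properties of l.s.c. functions}): continuity of $\bar{w}$, existence and one-sided continuity of $\bar{w}_t^{\pm}$, and coincidence of the countable jump set with the atomic support $\{\tilde t_i\}=\mathrm{supp}(\mu_{pp})$ all follow at once. Since $\bar{z}:=\bar{w}_t^-$, the continuity of $\bar{z}$ off $\{\tilde t_i\}$ and the strict jump $\bar{w}_t^-(\tilde t_i)<\bar{w}_t^+(\tilde t_i)$ are direct translations.

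For Item 2, my plan is to establish the distributional inequality
\[
\bar{w}_{tt}\ \ge\ -\bar{z}-\frac{\bar{w}^q}{(p-1)(\bar{w}+\bar{z})^{p-2}}
\]
on $(t_0,t_1)$ (as signed measures) and then integrate using the generalized Newton-Leibniz identity from the preceding lemma. I would derive the pointwise version on the $r$-side. At an $r$ in the full-measure set where the locally semi-convex $\bar{u}(r)$ is classically twice differentiable, pick any maximizer $x_r\in\partial B_r$ of $u$. Since $u$ attains its spherical maximum at $x_r$, the tangential gradient of $u$ vanishes there and $\Delta_{\mathrm{tan}}u(x_r)\le 0$. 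Expanding
\[
\Delta_p u(x_r)=|u_r(x_r)|^{p-2}\!\left[(p-1)u_{rr}(x_r)+\tfrac{n-1}{r}u_r(x_r)+\Delta_{\mathrm{tan}}u(x_r)\right],
\]
the tangential term drops favorably, giving $-\Delta_p u(x_r)\ge-|u_r|^{p-2}\bigl[(p-1)u_{rr}+(n-1)u_r/r\bigr]$. Combining with $-\Delta_p u\le u^q$ and invoking the envelope relations $u_r(x_r)=\bar{u}'(r)$, $u_{rr}(x_r)\le\bar{u}''(r)$ (which hold because the nonnegative function $r'\mapsto\bar{u}(r')-u(r'\hat r_0)$, $\hat r_0=x_r/r$, is minimized at $r$), one obtains the radial $p$-Laplace inequality
\[
-(p-1)|\bar{u}'|^{p-2}\bar{u}''+\frac{n-1}{r}|\bar{u}'|^{p-1}\ \le\ \bar{u}^q
\]
valid a.e.\ wherever $\bar{u}'(r)<0$.

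Translating through the change of variables $r=\eta e^{-t/\eta}$, $\bar{u}(r)=e^t\bar{w}(t)$, for which a direct calculation gives $\bar{u}_r=-\eta e^t(\bar w+\bar w_t)/r$ (so the sign hypothesis $\bar{u}_r<0$ becomes precisely $\bar{w}+\bar{z}>0$) and $e^{qt}=\eta^n r^{-n}$, the chain rule converts the radial inequality into the pointwise bound
\[
\bar{w}_{tt}\ \ge\ -\bar{z}-\frac{\bar{w}^q}{(p-1)(\bar{w}+\bar{z})^{p-2}}\qquad\text{a.e. on }[t_0,t_1].
\]
Because $\mu_{pp}+\mu_{sc}\ge 0$ by semi-convexity (Lemma \ref{properties of l.s.c. functions}), this a.e. inequality upgrades automatically to the signed-measure inequality on $(t_0,t_1)$ via Lebesgue decomposition. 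Evaluating on $(t_0,t_1)$ and applying $\bar{w}_t^-(t_1)-\bar{w}_t^+(t_0)=\mu((t_0,t_1))$ yields the second inequality of the claim, while the first inequality $\bar{z}(t_1)-\bar{z}(t_0)\ge\bar{w}_t^-(t_1)-\bar{w}_t^+(t_0)$ is immediate from $\bar{w}_t^-(t_0)\le\bar{w}_t^+(t_0)$.

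The main obstacle I anticipate is the envelope step: one has to justify that at a.e.\ $r$ the locally semi-convex $\bar{u}$ is classically twice differentiable and that at each such $r$ there exists a maximizer $x_r$ at which the identities $u_r(x_r)=\bar{u}'(r)$ and the one-sided second-order comparison $u_{rr}(x_r)\le\bar{u}''(r)$ are valid. The upgrade from a.e.\ pointwise to measure-valued inequality is routine given the sign of the singular parts, but the envelope / semi-convex regularity step is the real work and is where the chain-rule bookkeeping with the pure-point part of $\bar{w}_{tt}$ must be handled with care.
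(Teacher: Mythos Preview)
Your proof is correct and follows essentially the same route as the paper's. The only organizational difference is that the paper does the touching argument in a single step: it views $\bar u(|x|)$ as a radial function on $\mathbb{R}^n$ touching $u(x)$ from above at the spherical maximizer $x_0$, so the full $n$-dimensional first- and second-order contact conditions give $-\Delta_p\bar u(|x_0|)\le-\Delta_p u(x_0)\le u^q(x_0)=\bar u^q(r_0)$ directly, with no need to separate the tangential Laplacian or invoke envelope relations. Your two-step version (drop $\Delta_{\mathrm{tan}}u$ by the spherical-maximum sign, then transfer $u_r,u_{rr}$ to $\bar u',\bar u''$ via the one-dimensional envelope) unpacks exactly the same information and yields the same radial inequality; the subsequent change of variables to $t$, the a.e.\ bound on the absolutely continuous part of $\bar w_{tt}$, and the upgrade to a measure inequality using $\mu_{pp},\mu_{sc}\ge0$ match the paper verbatim.
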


\begin{proof}

The first item is clear, and we will prove the second one. 

Suppose $\bar{u}_{rr}(r)$ exists at a point $r_{0}$. Since $\bar{u}(|x|)$
touches $u(x)$ from above at some point $|x_{0}|=r_{0}$, we have 
\[
-(\Delta_{p}\bar{u})|_{|x|=r_{0}}\le u^q(x_{0})=\bar{u}^q(r_{0}).
\]
 Once $\bar{w}(t_0)+\bar{z}(t_0)>0$  at $t_{0}=\eta\log\frac{\eta}{r_{0}}$, after direct calculation we obtain
\[
-\frac{d}{dt}(\bar{w}+\bar{w}_{t})^{p-1}|_{t_0 }\le\bar{w}^{q}(t_0).
\]
 Then as long as $\bar{w}$ has second derivatives and $\bar{w}+\bar{w}_{t}>0$, we can prove that 
\[
\bar{z}_{t}\ge-\bar{z}-\frac{\bar{w}^{q}}{(p-1)(\bar{w}+\bar{z})^{p-2}}.
\]
Since $\mu_{pp},\mu_{sc}$ are nonnegative, we conclude the proof of  Item 2. 

\end{proof}
\begin{lem}\label{not below a line}

For any $K>0$, there does not exist $t_{0}>0$ such that the inequality
$\bar{z}(t)\leq -K\bar{w}(t)$ holds for all $t\in[t_{0},\infty)$.

\end{lem}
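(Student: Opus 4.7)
The plan is to argue by contradiction. Suppose there exist $K > 0$ and $t_0 > \eta\log\eta$ such that $\bar{z}(t) = \bar{w}_t^-(t) \leq -K\bar{w}(t)$ for every $t \geq t_0$. I would then translate this one-sided differential inequality into an exponential decay bound on $\bar{w}$ and confront it with the growth statement Item 3 of Lemma \ref{properties of l.s.c. functions}.

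The key step is to upgrade the pointwise inequality $\bar{w}_t^- \leq -K\bar{w}$ into an integral Gronwall-type estimate. Since $\bar{w}$ is locally semi-convex on $(\eta\log\eta,\infty)$ by Lemma \ref{properties of l.s.c. functions} Item 2, it is locally Lipschitz, hence absolutely continuous on every compact subinterval of $(\eta\log\eta,\infty)$; its classical derivative $\bar{w}_t$ exists almost everywhere and coincides with $\bar{w}_t^-$ outside the countable exceptional set $\{\tilde{t}_i\}$. Combined with the standing positivity $\bar{w}(t) > 0$ (which follows from $u>0$), the hypothesis becomes $(\log\bar{w})_t \leq -K$ for a.e.\ $t \geq t_0$. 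Integrating in the absolutely continuous sense gives
\[
\bar{w}(t) \leq \bar{w}(t_0)\, e^{-K(t-t_0)}, \qquad t \geq t_0,
\]
so that $e^{Kt}\bar{w}(t)$ remains bounded on $[t_0,\infty)$.

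This contradicts Item 3 of Lemma \ref{properties of l.s.c. functions}, which asserts $e^{kt}\bar{w}(t) \to \infty$ as $t \to \infty$ for every $k > 0$; applying it with $k = K$ closes the argument. The main obstacle to watch is the one-sided/distributional calculus for the merely semi-convex function $\bar{w}$: once the local Lipschitz property and hence absolute continuity are invoked, the passage from the pointwise inequality on $\bar{w}_t^-$ to the exponential decay bound reduces to the standard logarithmic Gronwall step, and no genuine technical difficulty remains.
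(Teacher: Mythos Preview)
Your proposal is correct and follows essentially the same route as the paper: assume by contradiction that $\bar{w}_t^- \le -K\bar{w}$ on $[t_0,\infty)$, integrate the logarithmic differential inequality to obtain $\bar{w}(t)\le \bar{w}(t_0)e^{-K(t-t_0)}$, and then invoke Item~3 of Lemma~\ref{properties of l.s.c. functions} for the contradiction. Your write-up is in fact a bit more careful than the paper's in justifying the integration step via the local Lipschitz/absolute continuity coming from semi-convexity.
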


\begin{proof}

If $\bar{z}(t)\le -K\bar{w}(t)$ holds for some $K>0$ and all $t\in[t_{0},\infty)$,
then $\bar{z}(t)<0$ and $\bar{w}(t)$ monotonically decreases. Then
\[
\frac{d\bar{w}(t)}{dt}\le -K\bar{w}(t).
\]
Then $dt\le-\frac{d\bar{w}(t)}{K\bar{w}(t)}$, which implies for any
$t_{1}>t_{0}$
\begin{align*}
t_{1}-t_{0} & \le-\frac{1}{K}\int_{t_{0}}^{t_{1}}\frac{d\bar{w}(t)}{\bar{w}(t)} =-\frac{1}{K}\log\bar{w}(t)|_{t_{0}}^{t_{1}}\\
 & =-\frac{1}{K}(\log\bar{w}(t_{1})-\log\bar{w}(t_{0})).
\end{align*}
Then 
\[
\bar{w}(t_{1})\le\bar{w}(t_{0})e^{-K(t_{1}-t_{0})}
\]
which violates Item 3 of Lemma \ref{properties of l.s.c. functions}.  
\end{proof}

We let
\[
\Omega_{k}^{\delta}=\{(\bar{w},\bar{z});0<\bar{w}\le(\frac{\delta}{k})^{\frac{n-p}{p(p-1)}},\,\,\bar{z}>-k\bar{w}^{q-p+2}\}.
\]
The following lemma is the key of our argument.

\begin{lem}\label{stay in Omega}

For fixed $0<\varepsilon <1$ small enough, we let $k=k(\varepsilon)=\frac{1}{(p-1)(1-\varepsilon)}$ and 
\[0\le\delta=\delta(\varepsilon)=\min\{\frac{1}{2^p|p-2|},\frac{1}{4(q-p+2)}\}\varepsilon\le\frac{1}{2}.\]
There is $T>0,$ such that if  $t_0>T$ and $(\bar{w}(t_{0}),\bar{z}(t_{0}))\in\Omega_{k}^{\delta}$, then for $t>t_{0}$, $(\bar{w}(t),\bar{z}(t))\in\Omega_{k}^{\delta}$. 

\end{lem}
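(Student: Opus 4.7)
The plan is to argue by contradiction, showing that the lower boundary $\bar{z} = -k\bar{w}^{q-p+2}$ of $\Omega_k^\delta$ is one-sidedly repelling for the trajectory $(\bar{w}(t),\bar{z}(t))$. First I would choose $T$ large enough that $\bar{w}(t) \leq \tfrac{1}{2}(\delta/k)^{(n-p)/(p(p-1))}$ for all $t \geq T$; this is possible because $\bar{w}(t)\to 0$ by Lemma \ref{properties of l.s.c. functions}(2), and it removes the face $\bar{w} = (\delta/k)^{(n-p)/(p(p-1))}$ as a possible exit locus. Since $\bar{w}$ is continuous and $\bar{z} = \bar{w}_t^-$ is left-continuous with only upward jumps (those jumps belonging to the nonnegative atomic measure $\mu_{pp}$), if the trajectory did leave $\Omega_k^\delta$ at some $t^* > t_0 \geq T$, there would have to be a first exit time $t_1$ with
\[
H(t_1) := \bar{z}(t_1) + k\bar{w}(t_1)^{q-p+2} = 0, \qquad H(t) > 0 \text{ on } [t_0,t_1).
\]

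Next I would convert Lemma \ref{jump above}(2) into an integral estimate for $H$. In $\Omega_k^\delta$ the bound $k\bar{w}^{q-p+1} \leq \delta \leq 1/2$ yields $\bar{w} + \bar{z} \geq (1-\delta)\bar{w} > 0$, so Lemma \ref{jump above}(2) applies. Since $\bar{w}$ is locally Lipschitz with $\bar{w}_t = \bar{z}$ a.e., the chain rule for $\bar{w}^{q-p+2}$ gives, for $t_0 \leq s < t \leq t_1$,
\[
H(t) - H(s) \geq \int_s^t \bar{w}^{q-p+2}\left[\alpha\bigl(1 - k(q-p+2)\bar{w}^{q-p+1}\bigr) - \frac{1}{(p-1)(1-\alpha\bar{w}^{q-p+1})^{p-2}}\right] d\tau,
\]
where $\alpha := -\bar{z}/\bar{w}^{q-p+2}$; note $\alpha \leq k$, with equality exactly on the lower boundary. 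Observe that the singular continuous and pure point parts of $\mu = \bar{w}_{tt}$ contribute only nonnegative terms to the left-hand side, which is the key feature that lets this inequality survive the low regularity of $\bar{w}$.

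The heart of the argument is to verify that the bracketed expression is strictly positive near $\alpha = k$. Setting $x = k\bar{w}^{q-p+1} \in [0,\delta]$ and Taylor-expanding $(1-x)^{2-p} = 1 + (p-2)x + O_p(x^2)$, one finds that at the boundary $\alpha = k$,
\[
\text{bracket}\bigr|_{\alpha = k} = \frac{1}{p-1}\left[\frac{\varepsilon}{1-\varepsilon} - x\Bigl(\frac{q-p+2}{1-\varepsilon} + (p-2)\Bigr) - O_p(x^2)\right].
\]
The constraint $\delta \leq \varepsilon/(4(q-p+2))$ forces the linear correction to absorb at most $\varepsilon/4$ of the leading gap $\varepsilon/(1-\varepsilon)$, while $\delta \leq \varepsilon/(2^p|p-2|)$ kills the Taylor remainder. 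Together they imply the bracket is bounded below by a positive constant $c(p)\varepsilon$ on the boundary, and by continuity in $\alpha$ the bracket remains $\geq \tfrac{1}{2}c(p)\varepsilon$ whenever $|\alpha - k| \leq \eta$ for some small $\eta = \eta(p,\varepsilon) > 0$.

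To conclude, note $\bar{w}(t_1) > 0$, so $\bar{w}(t) \geq \bar{w}(t_1)/2$ on a small left-interval $[t_1 - \theta, t_1]$. By continuity of $H$ and $H(t_1) = 0$, the ratio $\alpha(t) = k - H(t)/\bar{w}(t)^{q-p+2}$ stays within $\eta$ of $k$ on this interval for $\theta$ small enough. Integrating the $H$-inequality gives $H(t_1) - H(t_1 - \theta) \geq C\theta > 0$, contradicting $H(t_1) = 0 < H(t_1 - \theta)$. The main technical obstacle is the bookkeeping of the non-classical second derivative of $\bar{w}$: one must argue entirely through the integral formulation of Lemma \ref{jump above}(2) rather than pointwise ODE comparison, and exploit that all non-absolutely-continuous parts of $\bar{w}_{tt}$ carry a sign that favors the inequality.
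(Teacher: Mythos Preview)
Your argument is correct and essentially the same as the paper's: both argue by contradiction at a first exit time $t_1$ on the lower boundary $\bar z=-k\bar w^{q-p+2}$, and both extract the contradiction from Lemma~\ref{jump above}(2) combined with the explicit smallness bounds on $\delta$. The only difference is packaging: the paper compares the secant slope $\dfrac{\bar z(t)-\bar z(t_1)}{\bar w(t)-\bar w(t_1)}$ as $t\to t_1^-$ (obtaining $\ge -\varepsilon/4$ from lying above the boundary curve and $\le -\varepsilon/2$ from the differential inequality), whereas you equivalently show the height function $H=\bar z+k\bar w^{q-p+2}$ is increasing near $t_1$.
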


\begin{proof}

Since 
\[
\lim_{t\to\infty}\bar{w}(t)=0,
\]
we may first choose $T$ such that when $t\ge T$, $0<\bar{w}(t)<(\frac{\delta}{k})^{\frac{n-p}{p(p-1)}}.$
If in addition for some $t_{0}\ge T$, $(\bar{w}(t_{0}),\bar{z}(t_{0}))\in\Omega_{k}^{\delta}$,
from the first item of Lemma \ref{jump above}, that the only reason why $(\bar{w}(t),\bar{z}(t))\in\Omega_{k}^{\delta}$
may fail for $t\ge t_{0}$ is that there is $t_{1}>t_{0}$ such that
$\bar{z}(t_{1})=-k\bar{w}^{q-p+2}(t_{1})$ and for $t\in[t_{0},t_{1})$,
$(\bar{w}(t),\bar{z}(t))\in\Omega_{k}^{\delta}$ and when $t\to t_{1}^{-}$,
$(\bar{w}(t),\bar{z}(t))\to(\bar{w}(t_{1}),\bar{z}(t_{1}))$. So we
have 

\begin{align}
\liminf_{t\to t_{1}^{-}}\frac{\bar{z}(t)-\bar{z}(t_{1})}{\bar{w}(t)-\bar{w}(t_{1})} & \ge\frac{d(-k\bar{w}^{q-p+2})}{d\bar{w}}|_{\bar{w}=\bar{w}(t_{1})}\nonumber \\
 & =-k(q-p+2)\bar{w}^{q-p+1}\nonumber \\
 & \ge-(q-p+2)k(\frac{\delta}{k})^{\frac{n-p}{p(p-1)}(q-p+1)}\nonumber \\
 & =-(q-p+2)\delta\nonumber \\
 & \ge-\frac{\varepsilon}{4}.\label{derivative lower bdd}
\end{align}
From Lemma \ref{properties of l.s.c. functions}, we know 
\[
\bar{w}_{tt}=\nu_{ac}+\nu_{pp}+\nu_{sc}
\]
in which $\nu_{pp}$ and $\nu_{sc}$ are nonnegative. Since 
\[
\bar{z}_{t}\ge-\bar{z}-\frac{\bar{w}^{q}}{(p-1)|\bar{w}+\bar{z}|^{p-2}}
\]
in the sense of distribution we know for $t_{2}<t_{1}$
\begin{align*}
\bar{z}(t_{1})-\bar{z}(t_{2}) & =\int_{t_{2}}^{t_{1}}d\bar{z}\\
 & \ge\int_{t_{2}}^{t_{1}}(-\bar{z}-\frac{\bar{w}^{q}}{(p-1)|\bar{w}+\bar{z}|^{p-2}})dt.
\end{align*}
Since we may assume $\bar{w}(t_{1})<\bar{w}(t_{2})$ as $\bar{z}(t)<0$
when $t\in[t_{2},t_{1}]$, we see 
\begin{align*}
\limsup_{t_{2}\to t_{1}^{-}}\frac{\bar{z}(t_{1})-\bar{z}(t_{2})}{\bar{w}(t_{1})-\bar{w}(t_{2})} & =\limsup_{t_{2}\to t_{1}^{-}}\frac{\bar{z}(t_{1})-\bar{z}(t_{2})}{t_{1}-t_{2}}\frac{t_{1}-t_{2}}{\bar{w}(t_{1})-\bar{w}(t_{2})}\\
 & \le\limsup_{t_{2}\to t_{1}^{-}}\frac{\int_{t_{2}}^{t_{1}}(-\bar{z}-\frac{\bar{w}^{q}}{(p-1)|\bar{w}+\bar{z}|^{p-2}})dt}{t_{1}-t_{2}}\frac{t_{1}-t_{2}}{\bar{w}(t_{1})-\bar{w}(t_{2})}\\
 & =(-\bar{z}(t_{1})-\frac{\bar{w}^{q}(t_{1})}{(p-1)|\bar{w}(t_{1})+\bar{z}(t_{1})|^{p-2}})\frac{1}{\bar{z}(t_{1})}\\
 & =-1+\frac{\bar{w}^{p-2}(t_{1})}{k(p-1)|\bar{w}(t_{1})+\bar{z}(t_{1})|^{p-2}}\\
 & =-1+\frac{1-\varepsilon}{|1-k\bar{w}^{q-p+1}(\beta)|^{p-2}}\\
 & \le-1+\frac{1}{(1-\delta)^{p-2}}(1-\varepsilon)\\
 & \le-1+\max\{1,1+2^{p-1}(p-2)\delta\}(1-\varepsilon)\\
 & \le-\frac{\varepsilon}{2},
\end{align*}
which contradicts with (\ref{derivative lower bdd}). Here we used the fact that
\[
\frac{1}{(1-\delta)^{p-2}}\le 1+2^{p-1}(p-2)\delta
\]
for $p\ge2$ and $0\le \delta \le\frac{1}{2}$.
\end{proof}

Now Theorem \ref{lower log bound thm} will follow from the following
lemma.

\begin{lem} \label{exist in Omega}

For fixed $0<\varepsilon<1$ small enough,   $k(\varepsilon),\delta(\varepsilon),T$
given in Lemma \ref{stay in Omega}, there is $t_{0}\ge T$ such that
$(\bar{w}(t_{0}),\bar{z}(t_{0}))\in\Omega_{k}^{\delta}$.

\end{lem}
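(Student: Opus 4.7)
I would argue by contradiction. Suppose $(\bar{w}(t),\bar{z}(t)) \notin \Omega_k^\delta$ for every $t \ge T$. Since $\bar{w}(t) \to 0$ by Lemma \ref{properties of l.s.c. functions}, after enlarging $T$ we may assume $\bar{w}(t) \le (\delta/k)^{(n-p)/(p(p-1))}$, so the failure of membership reduces to the pointwise inequality $\bar{z}(t) \le -k\,\bar{w}(t)^{q-p+2}$ for every $t \ge T$. Writing $\alpha := q-p+1 = p(p-1)/(n-p)$ for brevity, this is $\bar{z}(t) \le -k\bar{w}(t)^{\alpha+1}$.

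The key move is to pick an auxiliary $\varepsilon'' > 0$ (depending on $\varepsilon$ and $p$) so that
\[
c_2 \;:=\; (1-\varepsilon'')k \,-\, \frac{1}{(p-1)(1-\varepsilon'')^{\max(0,\,p-2)}}
\]
is strictly positive; this reduces to the elementary inequality $(1-\varepsilon'')^{p-1} > 1-\varepsilon$ when $p\ge 2$ and to $\varepsilon'' < \varepsilon$ when $p<2$, both arrangeable with e.g.\ $\varepsilon'' = \varepsilon/(2(p-1))$ and $\varepsilon'' = \varepsilon/2$ respectively. Apply Lemma \ref{not below a line} with $K = \varepsilon''$ to locate some $t_n \ge T$ with $\bar{z}(t_n) > -\varepsilon''\bar{w}(t_n)$, and set
\[
G(t) := \bar{z}(t) + \varepsilon''\,\bar{w}(t),
\]
so that $G(t_n) > 0$. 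The plan is to prove that $G$ is non-decreasing on $[t_n,\infty)$ while $G(t) \to 0$ as $t\to\infty$, which contradicts $G(t_n)>0$.

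For the monotonicity, on any subinterval $[t_n, t_1]$ on which $G \ge 0$ we have $\bar{w}+\bar{z}\ge (1-\varepsilon'')\bar{w}>0$, so Lemma \ref{jump above}(2) applies. Combining its conclusion with the identity $\bar{w}(t_1)-\bar{w}(t_n)=\int_{t_n}^{t_1}\bar{z}\,dt$ (valid by the local Lipschitz continuity of $\bar{w}$ that follows from local semi-convexity) yields
\[
G(t_1)-G(t_n) \;\ge\; \int_{t_n}^{t_1}\!\!\left[(1-\varepsilon'')(-\bar{z}) - \frac{\bar{w}^q}{(p-1)(\bar{w}+\bar{z})^{p-2}}\right]dt.
\]
Substituting $-\bar{z}\ge k\bar{w}^{\alpha+1}$ from the contradiction hypothesis and bounding $(\bar{w}+\bar{z})^{p-2}$ either by $((1-\varepsilon'')\bar{w})^{p-2}$ (when $p\ge 2$) or by $\bar{w}^{p-2}$ (when $p<2$), the integrand is at least $c_2\bar{w}^{\alpha+1}>0$. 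A bootstrap — using that $\bar{z}=\bar{w}_t^-$ has only upward jumps at atomic points of $\mu_{pp}$, so $G$ is lower semi-continuous — excludes the first crossing time $t^* > t_n$ at which $G(t^*)=0$, and hence $G(t) \ge G(t_n) + c_2\int_{t_n}^t \bar{w}^{\alpha+1}\,ds > 0$ for all $t \ge t_n$.

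For the limit, the two-sided pinch $-\varepsilon''\bar{w}(t) < \bar{z}(t) \le -k\bar{w}^{\alpha+1}(t)$ together with $\bar{w}(t)\to 0$ forces $\bar{z}(t)\to 0$, hence $G(t)\to 0$. Any non-decreasing function with limit $0$ at infinity is everywhere $\le 0$, contradicting $G(t_n)>0$. The main technical hurdles are (i) executing the bootstrap through the atomic points of $\mu_{pp}$, where the upward-only nature of the jumps of $\bar{z}$ from Lemma \ref{jump above}(1) is exactly what preserves the one-sided comparison; and (ii) verifying that $c_2$ can be made positive uniformly for the intended choice of $\varepsilon''$ across all $p \in (1,n)$, a parameter-sensitive but elementary check.
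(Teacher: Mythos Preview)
Your argument is correct and is genuinely different from the paper's. The paper proceeds in two stages: it first invokes Lemma~\ref{not below a line} to place the trajectory inside a \emph{larger} region $\Omega_{k(\varepsilon_1)}^{\delta(\varepsilon_1)}$ for some $\varepsilon_1>\varepsilon$, applies Lemma~\ref{stay in Omega} to trap it there, and then argues that if the trajectory never entered the target $\Omega_{k(\varepsilon)}^{\delta(\varepsilon)}$ it would be squeezed between the two curves $\bar z=-k(\varepsilon_1)\bar w^{q-p+2}$ and $\bar z=-k(\varepsilon)\bar w^{q-p+2}$, forcing $d\bar z/d\bar w\to 0$ while the differential inequality simultaneously gives $d\bar z/d\bar w\le -\varepsilon/2$. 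Your route is a single monotonicity argument: under the contradiction hypothesis $\bar z\le -k\bar w^{q-p+2}$ you manufacture the Lyapunov-type quantity $G=\bar z+\varepsilon''\bar w$, use Lemma~\ref{not below a line} with the \emph{small} slope $K=\varepsilon''$ to make $G$ positive once, and then show $G$ is nondecreasing yet tends to $0$. This bypasses any appeal to Lemma~\ref{stay in Omega} and avoids the somewhat informal ``$d\bar z/d\bar w\to 0$'' step in the paper; the price is the case split on $p\gtrless 2$ when bounding $(\bar w+\bar z)^{p-2}$ and the bootstrap through the atoms of $\mu_{pp}$, both of which you handle. The two hurdles you flag---propagating $G>0$ across upward jumps of $\bar z$ (guaranteed by Lemma~\ref{jump above}(1) together with left-continuity of $\bar z=\bar w_t^-$) and the positivity of $c_2$---are the right ones, and your treatment is at the same level of rigor as the paper's own phase-plane arguments.
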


\begin{proof}

We may assume for $t$ large $\bar{z}(t)<0$ or we will find $t_{0}>T$
and $\bar{z}(t_{0})\ge0>-k\bar{w}(t_{0})^{q-p+2}.$ Then we may assume
$\bar{w}(t)$ is monotonically decrasing for $t$ large. 

Notice that $((\frac{\delta}{k})^{\frac{n-p}{p(p-1)}},-k(\frac{\delta}{k})^{\frac{q-p+2}{q-p+1}})$
lies on both $\bar{z}=-k\bar{w}^{q-p+2}$ and $\bar{z}=-\delta\bar{w}$. From Lemma \ref{not below a line}, we know that there must be some
$t_{0}$ large such that $\bar{z}(t_{0})>-k(\varepsilon)\bar{w}(t_{0}).$
Then there exist some $\varepsilon_{0}>\varepsilon,$ such that
$\bar{w}(t_{0})=(\frac{\delta(\varepsilon_{0})}{k(\varepsilon_{0})})^{\frac{n-p}{p(p-1)}}$
and $\bar{z}(t_{0})>-k(\varepsilon)(\frac{\delta(\varepsilon_{0})}{k(\varepsilon_{0})})^{\frac{q-p+2}{q-p+1}}.$
We may choose some $\varepsilon_{1}$ slightly bigger such that 
\[
(\bar{w}(t_{0}),\bar{z}(t_{0}))\in\Omega_{k(\varepsilon_{1})}^{\delta(\varepsilon_{1})}.
\]
Then we know that for $t\ge t_{0}$, $(\bar{w}(t_{1}),\bar{z}(t_{1}))\in\Omega_{k(\varepsilon_{1})}^{\delta(\varepsilon_{1})}.$ 

If $(\bar{w}(t),\bar{z}(t))$ always lies between $\bar{z}=-k(\varepsilon_{1})\bar{w}(t)^{q-p+2}$
and $\bar{z}=-k(\varepsilon)\bar{w}(t)^{q-p+2}$ for $t$ large. It
is obvious that 
\[
\frac{d\bar{z}}{d\bar{w}}(t_{i})\to0
\]
 for any sequence $t_{i}\to\infty$, where $\frac{d\bar{z}}{d\bar{w}}$
is defined. 

However, on the other hand 
\[
\frac{d\bar{z}}{d\bar{w}}(t_{i})\le-1+\frac{1-\varepsilon}{|1-k_{1}\bar{w}(t_{i})^{q-p+1}|^{p-2}}\le-\frac{\varepsilon}{2},
\]
which is a contradiction. Then we proved the lemma.

\end{proof}

\section{Proof of Theorem \ref{strengthened thm}}\label{Sec7}

In this section, we prove Theorem \ref{strengthened thm}.  First we look at one example. By direct computation we know, for a positive integer $Q$, $$u(x)=\frac{1}{|x|^{\frac{n-p}{p-1}}\log_{Q+2}\frac{1}{|x|}},$$ satisfies\begin{equation}\label{perturbation ineq}
0\le-\Delta_{p}u\le \frac{u^{\frac{n(p-1)}{n-p}}}{(\log_1u)(\log_2 u)\cdots(\log_{Q-1}u)(\log_Q u)}.
\end{equation}
Then we know the assumption of Theorem \ref{strengthened thm} is nearly optimal.

\begin{proof}[Proof of Theorem \ref{strengthened thm}]
   We follow the argument in Section $4$ in \cite{Taliaferro06}. 
   
   Following the notations in Section \ref{Sec6}, we have
   \begin{equation}
\bar{z}+\bar{z}_{t}\ge-\frac{\bar{w}^{q}}{(p-1)(\bar{w}+\bar{z})^{p-2}}\cdot\frac{1}{(\log_{1}e^t\bar{w})(\log_{2}e^t\bar{w})\cdots(\log_{Q-1}e^t\bar{w})(\log_{Q}e^t\bar{w})^\beta},
   \end{equation}
as long as $\bar{w}$ has second derivatives and $\bar{w}+\bar{w}_{t}>0$. 
Assume otherwise, $u$ satisfies Item $3$ of Theorem \ref{main thm} with $\tau=1$, i.e. 

$$\lim\limits_{t\to\infty}\bar{w}(t)=0$$ and 
\[
\liminf_{t\to\infty}\bar{w}(t)\ge (\frac{n-p}{p}\frac{1}{t})^{\frac{1}{q+1-p}},
\]
from which we have, for $t$ large enough,
\begin{equation}\label{lower bound of bar w in section 7}
   \bar{w}(t)\ge \frac{1}{2}(\frac{n-p}{p}\frac{1}{t})^{\frac{1}{q+1-p}}.
\end{equation}
Since 
\[
\log_j(e^t\bar{w}(t))=(\log_{j-1}t)(1+o(1))\,\,\, \text{as}\,\,\, t\to \infty,
\]
we obtain
\begin{equation}\label{lower bound of z and z_t in section 7}
    \bar{z}(t)+\bar{z}_{t}(t)\ge-\frac{\bar{w}(t)^{q}g(t)}{(p-1)(\bar{w}(t)+\bar{z}(t))^{p-2}},
\end{equation}
where 
\[
g(t)=\frac{2}{t(\log_1 t)\cdots(\log_{Q-2}t)(\log_{Q-1}t)^\beta}.
\]
Notice that   
\[
\int_{t_0}^T \bar{z}(t)e^tdt=\bar{z}(T)e^T-\bar{z}(t_0)e^{t_0}-\int_{t_0}^{T}e^td\mu,
\]
where $T>t_0>0$ large enough and $\mu=\bar{w}_{tt}$ is defined in Lemma \ref{properties of l.s.c. functions}.
Similar to the proof of Item $2$ in  Lemma \ref{jump above}, we have 
\begin{equation}\label{integration by part of e^t bar z }
    \begin{aligned}
    e^T\bar{z}(T)-e^{t_0}\bar{z}(t_0)&=
      \int_{t_0}^T e^t\bar{z}(t)dt+\int_{t_0}^T e^td\mu\\
      &\ge-\int_{t_0}^Te^t\frac{\bar{w}(t)^{q}g(t)}{(p-1)(\bar{w}(t)+\bar{z}(t))^{p-2}} dt.
    \end{aligned}
\end{equation}
Denoting
\[
I(T):=\int_{t_0}^T e^t\frac{\bar{w}(t)^{q}g(t)}{(p-1)(\bar{w}(t)+\bar{z}(t))^{p-2}}dt,
\]
from $(25)$ we have 
\begin{equation}\label{upper bound of -bar z(T) in section 7 with I(t)}
    -\bar{z}(T)\le e^{-T}I(T)+Ce^{-T} \,\,\,\text{for}\,\,\, T\ge t_0.
\end{equation}
By Lemma \ref{stay in Omega} and Lemma \ref{exist in Omega},
\begin{equation}\label{upper bound of I(t)}
    \begin{aligned}
        I(T)&=\int_{t_0}^T e^t\frac{\bar{w}(t)^{q}g(t)}{(p-1)(\bar{w}(t)+\bar{z}(t))^{p-2}}dt\\
        &\le\int_{t_0}^T e^t\frac{\bar{w}(t)^{q}g(t)}{(p-1)(\bar{w}(t)-\frac{1}{p-1}\bar{w}(t)^{q-p+2})^{p-2}}dt\\
        &\le \frac{2}{p-1}\int_{t_0}^Te^t\bar{w}(t)^{q-p+2}g(t)dt\\
        &= \frac{2}{p-2}\bigg\{[e^t\bar{w}(t)^{q-p+2}g(t)]\bigg|^T_{t_0}+J(T)\bigg\}
    \end{aligned}
\end{equation}
for $t_0>0$ large enough, where 
\[
J(T)=-\int_{t_0}^T e^td(\bar{w}(t)^{q-p+2}g(t)).
\]

The definition of $J(T)$  makes sense since it is easy to verify that $F(t)=\bar{w}(t)^{q-p+2}g(t)$ 
is locally semi-convex, as $g(t)$ is a smooth function.
Moreover, there holds
\[
F_t(t)= \bar{w}(t)^{q-p+2}g_t(t)+ (q-p+2)\bar{w}(t)^{q-p+1}\bar{z}(t)g(t).
\]
Thus,  
\begin{equation}
    J(T)=-\int_{t_0}^Te^tdF(t)=-\int_{t_0}^Te^t\bar{w}(t)^{q-p+2}g(t)\frac{g_t(t)}{g(t)}dt
-(q-p+2)\int_{t_0}^Te^t\bar{w}(t)^{q-p+2}\frac{\bar{z}(t)}{\bar{w}(t)}g(t)dt.
    \end{equation}
Noticing that $\frac{g'(t)}{g(t)}=O(\frac{1}{t})$ while $\frac{\bar{z}(t)}{\bar{w}(t)}=o(1)$ as $t\to\infty$, we obtain
\[
J(T)\le \frac{1}{2}\int_{t_0}^T e^t\bar{w}(t)^{q-p+2}g(t)dt.
\]
Hence, from  (\ref{upper bound of I(t)}), there exists $t_1>t_0$ such that 
\[
I(T)\le \frac{8}{p-1}e^T\bar{w}(T)^{q-p+2}g(T) \,\,\,\text{for}\,\,\, T\ge t_1.
\]
By (\ref{upper bound of -bar z(T) in section 7 with I(t)}), we have 
\begin{equation}\label{upper bound of bar z(T) in section 7 without I(t)}
    -\bar{z}(T)\le \frac{16}{p-1}g(T)\bar{w}(T)^{q-p+2}
\end{equation}
for $T\ge t_1$, by increasing $t_1$ if necessary. After multiplying (\ref{upper bound of bar z(T) in section 7 without I(t)}) by $\bar{w}^{-(q-p+2)}$ and integrating from $t_1$ to $T$ we have 
\begin{equation}
    \infty\leftarrow \frac{1}{q-p+1}(\frac{1}{\bar{w}(T)^{q-p+1}}-\frac{1}{\bar{w}(t_1)^{q-p+1}})\le \frac{16}{p-1}\int_{t_1}^\infty g(t)dt<\infty,
\end{equation}
which is a contradiction. Then we end the proof.
\end{proof}

\vskip 0.3cm

\noindent Shiguang Ma: School of Mathematical Science and LPMC, Nankai University, Tianjin, China; \\e-mail: 
msgdyx8741@nankai.edu.cn \\
The author is supported by  Natural Science Foundation of Tianjin (Grant No.
22JCJQJC00130), the Fundamental Research Funds for the Central Universities, and Natural Science Foundation of Fujian Province, China (Grant No. 2022J02050).

\vspace{0.2cm}

\noindent Shengyang Zang: School of Mathematical Science, Nankai University, Tianjin, China;
e-mail: 1120230032@mail.nankai.edu.cn\\

\end{document}